\let\oldtocsection=\tocsection
\let\oldtocsubsection=\tocsubsection
\let\oldtocsubsubsection=\tocsubsubsection
\renewcommand{\tocsection}[2]{\hspace{0em}\oldtocsection{#1}{#2}}
\renewcommand{\tocsubsection}[2]{\hspace{5em}\oldtocsubsection{#1}{#2}}
\renewcommand{\tocsubsubsection}[2]{\hspace{2em}\oldtocsubsubsection{#1}{#2}}
\def\Xint#1{\mathchoice
{\XXint\displaystyle\textstyle{#1}}%
{\XXint\textstyle\scriptstyle{#1}}%
{\XXint\scriptstyle\scriptscriptstyle{#1}}%
{\XXint\scriptscriptstyle\scriptscriptstyle{#1}}%
\!\int}
\def\XXint#1#2#3{{\setbox0=\hbox{$#1{#2#3}{\int}$}
\vcenter{\hbox{$#2#3$}}\kern-.5\wd0}}
\def\avgint{\Xint-}
\newtheorem{theorem}{Theorem}[section]
\newtheorem{lemma}[theorem]{Lemma}
\newtheorem{prop}[theorem]{Proposition}
\newtheorem{corollary}[theorem]{Corollary}
\newtheorem{definition}[theorem]{Definition}
\theoremstyle{definition}
\theoremstyle{remark}
\numberwithin{equation}{section}
\def\ocirc#1{\ifmmode\setbox0=\hbox{$#1$}\dimen0=\ht0     \advance\dimen0 by1pt\rlap{\hbox to\wd0{\hss\raise\dimen0     \hbox{\hskip.2em$\scriptscriptstyle\circ$}\hss}}#1\else     {\accent"17 #1}\fi} 
\newcommand{\pp}{{p(\cdot)}}
\newcommand{\cpp}{{p'(\cdot)}}
\newcommand{\Lp}{L^{p(\cdot)}}
\newcommand{\R}{\mathbb R}
\newcommand{\N}{\mathbb N}
\newcommand{\Z}{\mathbb Z}
\newcommand{\subRn}{{{\mathbb R}^n}}
\newcommand{\Cc}{\mathbb C}
\newcommand{\D}{\mathcal D}
\newcommand{\Ss}{\mathcal S}
\newcommand{\F}{\mathcal F}
\newcommand{\X}{\mathfrak{X}}
\DeclareMathOperator{\re}{Re}
\DeclareMathOperator*{\essinf}{ess\,inf}
\DeclareMathOperator*{\esssup}{ess\,sup}
\DeclareMathOperator{\grad}{\nabla}
\newcommand{\loc}{\text{loc}}
\newcommand{\Rh}{\mathcal R}
\newcommand{\Div}{\text{div}}
\title{Extrapolation and Factorization}
\author{David Cruz-Uribe, OFS}
\address{Department of Mathematics, University of Alabama, Tuscaloosa, AL 35487}
\email{dcruzuribe@ua.edu}
\subjclass[2010]{42B25, 42B30, 42B35}
\keywords{Muckenhoupt weights, extrapolation, factorization, singular integrals, Rubio de Francia iteration algorithm}
\date{June 8, 2017}
\thanks{These notes are a modestly revised version of lecture notes that were
distributed to accompany my four lectures at the 2017 Spring School on Analysis at Paseky,
sponsored by Charles University, Prague, the Czech Republic, May 29 to
June 2, 2017.  I have
taken the opportunity to correct a number of typos and make some other
minor corrections to the text.     I want to tahnk the organizers for
inviting me to participate in the Spring School.  I also want to thank
Javier Mart{\'\i}nez, one of the participants, for carefully reading the printed version and
providing me with a long list of corrections. 
While writing these notes I was supported by NSF Grant DMS-1362425 and research funds from the
  Dean of the College of Arts \& Sciences, the University of Alabama.
}
\begin{document}

\maketitle

\vspace*{-0.25in}

 {\small \tableofcontents}

\section{Introduction}
\label{section:introduction}

The purpose of these lecture notes is to give an overview of the
theories of factorization and extrapolation for Muckenhoupt $A_p$
weights.  The $A_p$ weights were introduced by
Muckenhoupt~\cite{muckenhoupt72} in the early 1970s and a wide ranging
theory quickly developed:
see~\cite{duoandikoetxea01,dynkin-osilenker83,garcia-cuerva-rubiodefrancia85}
for details of this early history and extensive references.

Very early on the fine structure of $A_p$ weights--e.g. the $A_\infty$
condition, the reverse H\"older inequality and the fact that $A_p$
implies $A_{p-\epsilon}$--played an important role in the
theory.  These properties were central to the proofs of the boundedness of maximal
operators and singular integral operators on weighted spaces: see
Coifman and Fefferman~\cite{coifman-fefferman74}.

The deep structure revealed by the Jones factorization theorem--that every $A_p$ weight can be factored as the product of two $A_1$ weights--was conjectured by Muckenhoupt~\cite{muckenhoupt79} at the Williamstown  conference in 1979, and Jones~\cite{jones80} proved it at the same conference.   His proof was highly technical and was soon overshadowed by simpler approaches.  

A very simple proof of factorization was given by Coifman, Jones and
Rubio de Francia~\cite{coifman-jones-rubiodefrancia83}.  At the heart
of their proof were techniques developed by Rubio de Francia to prove
his own fundamental contribution to the theory of weighted norm
inequalities: the theory of
extrapolation~\cite{rubiodefrancia82,rubiodefrancia83,rubiodefrancia84}.
In its simplest form, this result says that if an operator $T$
satisfies
\[ \int_\subRn |Tf|^2 w\,dx \leq C\int_\subRn |f|^2w\,dx \]
for all weights $w\in A_2$, then for any $1<p<\infty$ and any $w\in A_p$
\[ \int_\subRn |Tf|^p w\,dx \leq C\int_\subRn |f|^pw\,dx. \]
Note in particular that this is true if we let $w=1$, so (unweighted)
$L^p$ estimates follow from weighted $L^2$ estimates.  In other words,
if a norm inequality holds at some point in a scale function spaces
(in this case weighted Lebesgue spaces), then it holds at every point
in this scale.  Early on, Antonio C\'ordoba~\cite{garcia-cuerva87}
summarized this by saying, ``{\em There are no $L^p$ spaces, only
  weighted $L^2$.}''

The theory of Rubio de Francia extrapolation (as it is now called) has
undergone a renaissance in the last twenty years.  New and simpler
proofs have been developed, including proofs that yield sharp
constants.  The theory has been extended to other settings and other
classes of weights, and has been used to prove norm inequalities in a large
class of Banach function spaces.  It has found a number of
applications, including the proof of the $A_2$ conjecture by
Hyt\"onen~\cite{hytonenP2010}.  Extrapolation has also been extended
to the setting of two weight norm inequalities.  The latter theory is
beyond the scope of our discussions here:
see~\cite{MR2797562,cruz-uribe-perez00} for further details.  But here
we want to note that it played a very surprising role in the disproof
of the long standing Muckenhoupt-Wheeden conjectures for singular
integral operators: see~\cite{CRV2012,MR3020857,reguera-thieleP}.

In these notes we  survey the theories of factorization and
extrapolation and we  describe some of the many applications.
They are organized as follows: in Section~\ref{section:weights} we
 define the $A_p$ weights and examine their close relationship
with the Hardy-Littlewood maximal operator.  We do so because the
maximal operator lies at the heart of the theories of factorization
and extrapolation, with the connection coming from the Rubio de Francia iteration
algorithm.  In Section~\ref{section:fine} we will consider the fine
properties of $A_p$ weights, and in particular we will prove the
reverse H\"older inequality. Somewhat surprisingly, though no longer
needed to prove the boundedness of the maximal operator and singular
integrals, the reverse H\"older inequality still plays an important
role in weighted theory.  In Section~\ref{section:factorization} we
prove the Jones factorization theorem and a generalization that shows
that the factorization also encodes information about the reverse
H\"older classes of weights.  Here we introduce the iteration
algorithm, which provides a tool for creating $A_1$ weights with very
precise control of their size.  In Section~\ref{section:extrapolation}
we prove the Rubio de Francia extrapolation theorem.  We adopt the
abstract perspective of families of extrapolation pairs which lets us
derive a number of corollaries as trivial consequences of the main
extrapolation theorem.  In Section~\ref{section:examples} we give three
applications of extrapolation; these have been chosen to illustrate
some of the typical ways in which extrapolation can be applied.  In
Section~\ref{section:variations} we discuss sharp constant
extrapolation, which is used to prove weighted inequalities with
optimal control of the constant in terms of the $A_p$ constant $[w]_{A_p}$.
We illustrate this by sketching an elementary proof of the $A_2$
conjecture and describing its application to regularity results for the
Beltrami operator.  In Section~\ref{section:restricted} we give two variants
of extrapolation which can be used to prove norm inequalities for a
restricted range of exponents.  Restricted range extrapolation arose
in the study of operators related to second order elliptic PDEs
and the Kato conjecture.  In Section~\ref{section:bilinear} we apply
restricted range extrapolation to prove a bilinear extrapolation
theorem.  Finally, in Section~\ref{section:BFS} we briefly discuss the
extension of Rubio de Francia extrapolation to other scales of Banach
function spaces, and in particular to the variable Lebesgue spaces.

In writing these notes there is a tension between brevity and
completeness, and in many instances brevity has won.  We provide
proofs of the central results on factorization and
extrapolation, and sketch many of the other proofs.  We provide
extensive references for the missing details and also for the
historical context in which these ideas were developed.  These notes
should be accessible to anyone who has completed a graduate course in
measure theory (say from Royden~\cite{MR1013117} or Wheeden and
Zygmund~\cite{MR3381284}), but some familiarity with the basics of
harmonic analysis (say the first six chapters of
Duoandikoetxea~\cite{duoandikoetxea01} or the first four chapters of
Grafakos~\cite{grafakos08a}) would be helpful.  An earlier set of
lecture notes~\cite{CruzUribe:2016ji} from a conference in Antequera,
Spain, in 2014  is a useful complement to the current document.
Though primarily concerned with fractional integral operators, it
contains a fairly complete and detailed treatment of one weight norm
inequalities from the perspective of dyadic operators.  We will make
extensive use of this ``dyadic technology'' in our applications.

\section{The maximal operator and Muckenhoupt $A_p$ weights}
\label{section:weights}

We begin with some basic definitions.  We will always be working on
$\R^n$ and the underlying measure will be Lebesgue measure.\footnote{Much of
what we say can be extended to the more general setting of spaces of
homogeneous type, but this is beyond the scope of these notes.} We
will denote this measure by $dx$, $dy$, etc.  The
variable $n$ will only be used to denote the dimension of the
underlying space.  By a weight $w$ we will mean a locally integrable,
non-negative function and we define $L^p(w)$, $1\leq p<\infty$, to be
$L^p(\R^n,w\,dx)$.   We will denote the set of bounded functions of
compact support by $L^\infty_c$, and the set of smooth functions of
compact support by $C_c^\infty$. 

By a cube we will always  mean a set of the form
\[ Q = [a_1,b_1)\times [a_2,b_2) \times \cdots \times [a_n,b_n), \]
where $b_j-a_j = \ell(Q)>0$ for $1\leq j \leq n$.  (In other words, we
consider cubes whose edges are parallel to the coordinate axes.)
Sometimes we will assume the cubes $Q$ are open and other times that
they are closed.  Since we will only be considering absolutely
continuous measures on $\R^n$, this will generally not matter and we
will take whatever is convenient.

We will work extensively with average integrals and  we will use the notation
\[ \avgint_Q w\,dx = \frac{1}{|Q|}\int_Q w\,dx.  \]
Though we will generally use this notation for cubes, it works equally
well if we replace the cube $Q$ by a measurable set $E$ such that $0<|E|<\infty$.
We will apply the same notation for averages with respect to other
(absolutely continuous) measures.  Given a weight $\sigma$ that is
positive a.e., define
\[ \avgint_Q w\,d\sigma = \frac{1}{\sigma(Q)} \int_Q w\sigma\,dx. \]

Constants will be denoted by $C$, $c$, etc. and may change value at
each appearance.  Generally, constants will depend on the dimension
$n$, the value $p$ of any associated $L^p$ space, and possibly the
operator under consideration.  For emphasis, we may denote this
dependence by writing $C(n,p)$, etc.  We will consider dependence on
the weight $w$ more carefully as we will make clear below.  If the
underlying constant is not particularly important, we may use the
notation $A\lesssim B$ to denote $A \leq cB$ for some constant $c>0$.

\medskip

We now define the fundamental weight classes we are interested in.

\begin{definition}
Given $1<p<\infty$, a weight $w$ is in the Muckenhoupt class $A_p$, denoted by $w\in A_p$, if 
$0<w(x)<\infty$ a.e. and 
\[ [w]_{A_p} = \sup_Q \left(\avgint_Q w\,dx\right)\left(\avgint_Q w^{1-p'}\,dx\right)^{p-1}<\infty, \]
where the supremum is taken over all cubes $Q$.
\end{definition}

Since $p'-1=\frac{p'}{p}$, we can also write the $A_p$ condition  in
an equivalent form using $L^{p}$ and $L^{p'}$ norms: for any cube $Q$,
\begin{equation} \label{eqn:Ap-norm}
  |Q|^{-1} \|w^{\frac{1}{p}}\chi_Q\|_p
  \|w^{-\frac{1}{p}}\chi_Q\|_{p'} \leq [w]_{A_p}^{\frac{1}{p}}.
\end{equation}

The definition of $A_p$ is symmetric:  given $w\in A_p$, let $\sigma =
w^{1-p'}$.  Then $\sigma \in A_{p'}$ and $[\sigma]_{A_{p'}} =[w]_{A_p}^{p'-1}$.  

To understand the $A_p$ condition, it is helpful to note that by H\"older's inequality, for every cube $Q$,
\[ 1 \leq \left(\avgint_Q w\,dx\right)\left(\avgint_Q w^{1-p'}\,dx\right)^{p-1}. \]
Thus, the $A_p$ condition can be thought of as a kind of ``reverse'' H\"older inequality.  

If we adopt the convention that $0\cdot \infty = 0$, then in this definition we could omit the assumption that $0<w(x)<\infty$ a.e.  However, nothing is gained by doing so, since this assumption is actually a consequence of the definition: see~\cite[Section~IV.1]{garcia-cuerva-rubiodefrancia85} for more details. 

\begin{definition}
When $p=1$, we say that a weight $w$ is in $A_1$, denoted by $w\in A_1$, if 
\[ [w]_{A_1} = \sup_Q \esssup_{x\in Q} w(x)^{-1} \avgint_Q w\,dy < \infty, \]
where again the supremum is taken over all cubes $Q$.   
\end{definition} 

Equivalently, $w\in A_1$ if for every cube $Q$,
\[ \avgint_Q w\,dy \leq [w]_{A_1} \essinf_{x\in Q} w(x), \]
or if $Mw(x) \leq [w]_{A_1} \essinf_{x\in Q} w(x)$, where $M$ denotes
the Hardy-Littlewood maximal operator (see below).  For a proof of
this equivalence,
see~\cite[Section~IV.1]{garcia-cuerva-rubiodefrancia85}.  The $A_1$
condition is the limit of the $A_p$ condition as $p\rightarrow 1$:  see
Rudin~\cite[pp.~73--4]{MR924157}.

By H\"older's inequality we have the following inclusions: for
$1<p<q<\infty$, $A_1\subset A_p \subset A_q$, and
$[w]_{A_q}\leq [w]_{A_p}\leq [w]_{A_1}$.  These inclusions are proper,
as is shown by the family of weights $w(x)=|x|^a$.  For $1<p<\infty$,
$w\in A_p$ if $-n<a<(p-1)n$, and $w\in A_1$ if $-n<a\leq 0$.  Define
the overarching class $A_\infty$ by
\[ A_\infty = \bigcup_{p\geq 1} A_p. \]

The weights in $A_\infty$ are characterized by a reverse Jensen inequality: there exists a constant $[w]_{A_\infty}$ such that for all cubes $Q$,
\[ \avgint_Q w\,dx \leq [w]_{A_\infty} \exp\left(\avgint_Q \log(w)\,dx\right). \]
For a proof, see~\cite[Section~IV.2]{garcia-cuerva-rubiodefrancia85}.
This inequality is the limit of the $A_p$ condition as
$p\rightarrow \infty$; consequently, we have that
$[w]_{A_\infty} \leq [w]_{A_p}$.  (Again, see
Rudin~\cite[p.~73]{MR924157}.)  But in fact,  for any
weight $w\in A_\infty$,
\[ [w]_{A_\infty} = \lim_{p\rightarrow \infty}[w]_{A_p}.  \]
For a proof, see Sbordone and Wik~\cite{MR1291957}.  

\bigskip

There is a close connection between the Muckenhoupt $A_p$ weights and
the Hardy-Littlewood maximal operator.  For $f\in L^1_\loc$ define
\[ Mf(x) = \sup_Q \avgint_Q |f|\,dy \cdot \chi_Q(x), \]
where the supremum is taken over all cubes $Q$.
It is well known that for $1\leq p<\infty$, $M$ satisfies the weak
$(p,p)$ inequality:  there exists $C>0$ such that for all $f$ and all $t>0$, 
\[ |\{ x\in \R^n : Mf(x)>t\}| \leq \frac{C}{t^p}\int_\subRn |f|^p\,dx; \]
further, for $1<p\leq \infty$ it satisfies the strong $(p,p)$
inequality: there exists $C>0$ such that for all $f$,
\[ \|Mf\|_p \leq C\|f\|_p. \]
The $A_p$ condition lets us prove the same inequalities in the
weighted Lebesgue spaces $L^p(w)$, $1\leq p<\infty$.

\begin{theorem} \label{thm:Ap-max}
Given $1\leq p < \infty$ and a weight $w$, the following are equivalent:
\begin{enumerate}

\item $w\in A_p$;

\item for all $t>0$,
\[ w(\{ x\in \R^n : Mf(x) > t \}) \leq C(n,p)[w]_{A_p} \frac{1}{t^p} \int_\subRn |f|^pw\,dx; \]

\item if in addition, $p>1$,
\[ \int_\subRn (Mf)^pw\,dx \leq C(n,p) [w]_{A_p}^{p'} \int_\subRn |f|^pw\,dx. \]
\end{enumerate}
\end{theorem}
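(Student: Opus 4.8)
My plan is to establish the equivalences via the chain $(1)\Rightarrow(2)$, $(2)\Rightarrow(1)$, and, for $p>1$, $(1)\Rightarrow(3)$; the implication $(3)\Rightarrow(2)$ is immediate, since by Chebyshev's inequality $w(\{Mf>t\})=\int_{\{Mf>t\}}w\,dx\leq t^{-p}\int_{\subRn}(Mf)^pw\,dx$. For $(1)\Rightarrow(2)$ I would work with one dyadic grid at a time: using the pointwise comparison $Mf\lesssim_n\sum M^{d}f$ over the standard finite family of shifted dyadic maximal operators (each seeing the same constant $[w]_{A_p}$, since dyadic cubes are cubes), it suffices to decompose $\{M^{d}f>t\}$ into the disjoint maximal dyadic cubes $Q_j$ with $\avgint_{Q_j}|f|\,dy>t$. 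On each $Q_j$, Hölder's inequality gives $t^p<(\avgint_{Q_j}|f|\,dy)^p\leq(\avgint_{Q_j}|f|^pw\,dy)(\avgint_{Q_j}w^{1-p'}\,dy)^{p-1}$, while the $A_p$ condition gives $w(Q_j)\leq[w]_{A_p}|Q_j|(\avgint_{Q_j}w^{1-p'}\,dy)^{-(p-1)}$; multiplying and simplifying yields $w(Q_j)\leq[w]_{A_p}t^{-p}\int_{Q_j}|f|^pw\,dy$, and summing over the disjoint $Q_j$ gives $(2)$ with constant $\lesssim_n[w]_{A_p}$. (One could equally use the Vitali covering lemma directly; when $p=1$ the Hölder step is replaced by the $\essinf$ form of the $A_1$ condition.)

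For $(2)\Rightarrow(1)$, fix a cube $Q$ and test the weak-type inequality on a function supported on $Q$. When $p>1$ take $f=\sigma_N\chi_Q$, where $\sigma=w^{1-p'}$ and $\sigma_N=\min(\sigma,N)$, and let $N\to\infty$ at the end; when $p=1$ take $f=\chi_E$ for an arbitrary measurable $E\subset Q$ with $|E|>0$. Since $Mf(x)\geq\avgint_Q|f|\,dy$ for every $x\in Q$, letting $t$ increase to this average forces $Q\subset\{Mf>t\}$, so $(2)$ bounds $w(Q)$ from above; carrying out the simplification (using the identity $\sigma^pw=\sigma$ in the first case, and shrinking $E$ onto a set where $w$ is near its essential infimum in the second) produces exactly the $A_p$ (respectively $A_1$) inequality for $Q$, with $[w]_{A_p}$ no larger than the constant appearing in $(2)$; the a.e. finiteness and positivity of $w$ fall out of the same computation.

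The implication $(1)\Rightarrow(3)$ for $p>1$, where the sharp exponent $p'$ appears, is the substantive part. Reducing again to a single dyadic grid, I would run the Calder\'on--Zygmund ``principal cube'' stopping-time construction for $M^{d}f$: this produces a sparse family $\mathcal S$ of dyadic cubes (pairwise disjoint sets $E_Q\subset Q$ with $|E_Q|\geq\tfrac12|Q|$) for which $\int_{\subRn}(M^{d}f)^pw\,dx\lesssim\sum_{Q\in\mathcal S}(\avgint_Q|f|\,dy)^pw(Q)$. Setting $\sigma=w^{1-p'}$ and invoking the $A_p$ condition, each term obeys $(\avgint_Q|f|\,dy)^pw(Q)\leq[w]_{A_p}\,\sigma(Q)\,(\avgint_Q|f|\sigma^{-1}\,d\sigma)^p$. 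The resulting sum is then estimated by the dyadic Carleson embedding theorem for the measure $\sigma\,dx$; because $\sigma^{1-p}\,dx=w\,dx$, this bounds it by $C(p)\,\Lambda\int_{\subRn}|f|^pw\,dx$, where $\Lambda$ is the Carleson constant of the sequence $\{\sigma(Q)\}_{Q\in\mathcal S}$. Finally, the sparseness of $\mathcal S$ together with the reverse H\"older / $A_\infty$ property of $\sigma=w^{1-p'}\in A_{p'}$ gives $\Lambda\lesssim[\sigma]_{A_\infty}\leq[\sigma]_{A_{p'}}=[w]_{A_p}^{p'-1}$; combining the two powers of $[w]_{A_p}$ yields the exponent $1+(p'-1)=p'$, which is $(3)$.

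The one step I expect to be the real obstacle is obtaining this sharp power $p'$ rather than a cruder finite power. A soft argument for \emph{some} strong estimate --- interpolating the weak bound of $(2)$ with the trivial $L^\infty$ estimate, or with the weak $(q,q)$ bound for $1<q<p$ --- either never reaches the exponent $p$ or requires the openness property $A_p\subset A_{p-\varepsilon}$, and that property, like the Carleson-constant estimate above, rests on the reverse H\"older inequality proved in Section~\ref{section:fine}. So the price of the sharp constant is the careful dyadic bookkeeping together with one quantitative consequence of the reverse H\"older inequality.
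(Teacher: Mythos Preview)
Your proof of $(1)\Rightarrow(3)$ follows essentially the same route as the paper's (the Christ--Fefferman argument): decompose $\{M^df>a^k\}$ into Calder\'on--Zygmund cubes $Q_j^k$, rewrite each term $(\avgint_Q|f|\,dy)^pw(Q)$ as $[w]_{A_p}\,\sigma(Q)\,(\avgint_Q|f|\sigma^{-1}\,d\sigma)^p$, and then control the resulting sum over the sparse family. The paper finishes not with the Carleson embedding theorem but with the equivalent device of replacing $\sigma(Q_j^k)$ by $\sigma(E_j^k)$ and invoking Lemma~\ref{lemma:wtd-max-op}, the unweighted-in-constant bound $\|M^d_\sigma g\|_{L^p(\sigma)}\leq C(p)\|g\|_{L^p(\sigma)}$. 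Your Carleson embedding formulation is a legitimate repackaging of the same step.

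The one substantive correction concerns your last paragraph. The Carleson constant estimate $\Lambda\lesssim[\sigma]_{A_{p'}}=[w]_{A_p}^{p'-1}$ does \emph{not} require the reverse H\"older inequality or any $A_\infty$ machinery. It follows from the elementary Lemma~\ref{lemma:Ap-prop} (H\"older's inequality plus the definition of $A_{p'}$) applied to $\sigma\in A_{p'}$: from $|E_Q|\geq\tfrac12|Q|$ one gets $\sigma(Q)\leq 2^{p'}[\sigma]_{A_{p'}}\sigma(E_Q)$, and summing over the disjoint $E_Q\subset R$ yields $\sum_{Q\subset R}\sigma(Q)\leq 2^{p'}[\sigma]_{A_{p'}}\sigma(R)$. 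This is precisely how the paper obtains the second factor $[\sigma]_{A_{p'}}$, and the paper makes a point of this after Corollary~\ref{cor:p-epsilon}: the advantage of the Christ--Fefferman proof is exactly that the sharp exponent $p'$ comes out \emph{without} ever invoking the reverse H\"older inequality. So your anticipated ``real obstacle'' is not an obstacle at all; drop the detour through $[\sigma]_{A_\infty}$ and use Lemma~\ref{lemma:Ap-prop} directly.
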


For brevity, we will restrict ourselves to proving the equivalence of
(1) and (3) when $1<p<\infty$.  Furthermore, we will restrict
ourselves to the dyadic maximal operator.  Recall that the set of
dyadic cubes is the countable collection
\[ \Delta = \bigcup_{k\in \Z} \Delta_k, \]
where
\[ \Delta_k = \big\{ 2^{-k}\big([0,1)^n+m\big) :  m \in \Z^n \big\}. \]
The dyadic maximal operator is defined by
\[ M^df(x) = \sup_{Q\in \Delta} \avgint_Q |f|\,dy \cdot \chi_Q(x). \]
The proof we will give below can be adapted to the general case in
several ways; for this proof and for the proof of the weak type
inequality, we refer the reader
to~\cite{CruzUribe:2016ji,duoandikoetxea01,garcia-cuerva-rubiodefrancia85}.
We want to concentrate on the dyadic operator since it makes 
the main ideas of the proof clear while avoiding some technical difficulties.

The proof requires three lemmas.  The first is a construction that yields a collection of dyadic cubes often referred to as Calder\'on-Zygmund cubes.  For a proof, see~\cite{MR2797562,duoandikoetxea01,garcia-cuerva-rubiodefrancia85}.

\begin{lemma} \label{lemma:CZ-cubes}
Let $f\in L^p$, $1\leq p<\infty$.  Then for any $\lambda>0$, there exists a collection of pairwise disjoint dyadic cubes $\{Q_j\}$ such that
\[ \{ x\in \R^n : M^df(x)>\lambda \} = \bigcup_j Q_j \]
and
\[ \lambda < \avgint_{Q_j} |f|\,dx \leq 2^n\lambda. \]
Moreover, given  $a\geq 2^{n+1}$, for each $k\in \Z$ let $\{Q_j^k\}_j$ be the cubes gotten by taking $\lambda=a^k$.  Define
\[ \Omega_k = \{ x\in \R^n : M^df(x)> a^k \} = \bigcup_j Q_j^k, \]
and let $E_j^k = Q_j^k\setminus \Omega_{k+1}$.  Then the sets $E_j^k$ are pairwise disjoint and $|E_j^k|\geq \frac{1}{2}|Q_j^k|$.  
\end{lemma}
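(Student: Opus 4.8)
The plan is to prove the two parts in turn. For the first part, I would recall the standard Calderón–Zygmund stopping-time construction applied to $M^d f$ at height $\lambda$. Since $f \in L^p$, every dyadic cube $Q$ large enough has $\avgint_Q |f|\,dx \leq \|f\|_p |Q|^{-1/p}$, which tends to $0$ as $\ell(Q) \to \infty$; hence for each point $x$ with $M^d f(x) > \lambda$ there is a \emph{maximal} dyadic cube $Q \ni x$ with $\avgint_Q |f|\,dx > \lambda$. Let $\{Q_j\}$ be the collection of all such maximal cubes. They are pairwise disjoint by maximality (two dyadic cubes are either nested or disjoint, and a proper dyadic ancestor of a maximal cube fails the averaging inequality). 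By the characterization of $M^d$ as a supremum over dyadic cubes, $\{M^d f > \lambda\} = \bigcup_j Q_j$: the inclusion $\supseteq$ is clear, and $\subseteq$ holds because if $M^d f(x) > \lambda$ some dyadic cube witnesses it and is contained in a maximal one. For the two-sided bound, the lower estimate $\lambda < \avgint_{Q_j}|f|\,dx$ is the selection criterion; for the upper estimate, let $\widehat{Q_j}$ be the dyadic parent of $Q_j$, so $|\widehat{Q_j}| = 2^n |Q_j|$ and by maximality $\avgint_{\widehat{Q_j}} |f|\,dx \leq \lambda$, whence $\avgint_{Q_j}|f|\,dx \leq 2^n \avgint_{\widehat{Q_j}}|f|\,dx \leq 2^n\lambda$.

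For the second part, fix $a \geq 2^{n+1}$ and for each $k \in \Z$ apply the first part with $\lambda = a^k$ to obtain $\Omega_k = \{M^d f > a^k\} = \bigcup_j Q_j^k$ and set $E_j^k = Q_j^k \setminus \Omega_{k+1}$. The sets $E_j^k$ are pairwise disjoint: for fixed $k$ the $Q_j^k$ are disjoint, so the $E_j^k$ are disjoint in $j$; and since $a^{k+1} > a^k$ we have $\Omega_{k+1} \subseteq \Omega_k$, so any two sets with different first index $k < k'$ are separated because $E_j^k \subseteq \Omega_k \setminus \Omega_{k+1}$ while $E_i^{k'} \subseteq \Omega_{k'} \subseteq \Omega_{k+1}$, and these are disjoint. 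For the measure bound, I would estimate $|Q_j^k \cap \Omega_{k+1}|$ by summing over those cubes $Q_i^{k+1}$ contained in $Q_j^k$ (by disjointness and nesting of dyadic cubes, each $Q_i^{k+1}$ meeting $Q_j^k$ is contained in it — here one uses that the level-$(k{+}1)$ cubes have averages exceeding $a^{k+1} > a^k$, so they cannot properly contain $Q_j^k$, whose parent has average at most $a^k$). Then
\[
 |Q_j^k \cap \Omega_{k+1}| = \sum_{Q_i^{k+1} \subseteq Q_j^k} |Q_i^{k+1}|
 < \sum_{Q_i^{k+1} \subseteq Q_j^k} \frac{1}{a^{k+1}} \int_{Q_i^{k+1}} |f|\,dx
 \leq \frac{1}{a^{k+1}} \int_{Q_j^k} |f|\,dx
 \leq \frac{2^n a^k}{a^{k+1}} |Q_j^k| = \frac{2^n}{a} |Q_j^k| \leq \tfrac12 |Q_j^k|,
\]
using the upper bound $\avgint_{Q_j^k}|f|\,dx \leq 2^n a^k$ from the first part and $a \geq 2^{n+1}$. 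Hence $|E_j^k| = |Q_j^k| - |Q_j^k \cap \Omega_{k+1}| \geq \tfrac12 |Q_j^k|$.

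The main obstacle — really the only subtle point — is the containment claim that any level-$(k{+}1)$ Calderón–Zygmund cube meeting $Q_j^k$ lies inside $Q_j^k$. This is where the monotonicity of the heights $a^k < a^{k+1}$ is essential: a cube on which the average of $|f|$ exceeds $a^{k+1}$ cannot be a proper dyadic ancestor of $Q_j^k$, because every proper ancestor of $Q_j^k$ has average at most $a^k < a^{k+1}$ by the maximality in part one. Once this is in hand the rest is bookkeeping with the dyadic structure and the two-sided averaging bounds already established.
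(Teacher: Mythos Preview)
Your proof is correct and is exactly the standard Calder\'on--Zygmund stopping-time argument. The paper itself does not give a proof of this lemma; it simply states the result and refers the reader to \cite{MR2797562,duoandikoetxea01,garcia-cuerva-rubiodefrancia85}, where precisely the argument you wrote appears.
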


The second lemma shows that, in some sense, the measure $dw=w\,dx$ behaves like Lebesgue measure uniformly at all scales.

\begin{lemma} \label{lemma:Ap-prop}
Let $1\leq p<\infty$ and $w\in A_p$.  Then given any cube $Q$ and any measurable set $E\subset Q$, 
\[ \frac{|E|}{|Q|} \leq [w]_{A_p}^{\frac{1}{p}} \left(\frac{w(E)}{w(Q)}\right)^{\frac{1}{p}}. \]
\end{lemma}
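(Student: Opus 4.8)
The plan is to obtain the inequality from a single application of H\"older's inequality together with the $A_p$ condition written in the multiplicative form \eqref{eqn:Ap-norm}. I will treat $1<p<\infty$ first and handle the endpoint $p=1$ separately.

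For $1<p<\infty$, the first step is to write $1=w^{1/p}\cdot w^{-1/p}$ on $E$ and apply H\"older's inequality with exponents $p$ and $p'$:
\[
  |E| = \int_E w^{\frac1p}w^{-\frac1p}\,dx
  \le \left(\int_E w\,dx\right)^{\frac1p}\left(\int_E w^{-\frac{p'}{p}}\,dx\right)^{\frac1{p'}}.
\]
The second step is to enlarge the second integral from $E$ to $Q$ and to use $\frac{p'}{p}=p'-1$, so that the second factor is at most $\left(\int_Q w^{1-p'}\,dx\right)^{1/p'}=\|w^{-1/p}\chi_Q\|_{p'}$; thus
\[
  |E|\le w(E)^{\frac1p}\,\|w^{-\frac1p}\chi_Q\|_{p'}.
\]
The third step is to read off from \eqref{eqn:Ap-norm} that
\[
  \|w^{-\frac1p}\chi_Q\|_{p'} \le [w]_{A_p}^{\frac1p}\,|Q|\,\|w^{\frac1p}\chi_Q\|_p^{-1}
  = [w]_{A_p}^{\frac1p}\,|Q|\,w(Q)^{-\frac1p},
\]
where $w>0$ a.e. guarantees $w(Q)>0$. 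Substituting this into the previous display and dividing by $|Q|$ gives exactly the asserted inequality.

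For $p=1$ one argues directly from the definition of $A_1$. Since $E\subset Q$,
\[
  \essinf_{x\in Q}w(x)\le \essinf_{x\in E}w(x)\le \frac{1}{|E|}\int_E w\,dx,
\]
and combining this with $\avgint_Q w\,dx\le [w]_{A_1}\essinf_{x\in Q}w(x)$ yields $w(Q)\le [w]_{A_1}\,|Q|\,|E|^{-1}w(E)$, which rearranges to the claimed inequality with exponent $\frac1p=1$.

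I do not anticipate any genuine obstacle: the lemma is essentially a one-line consequence of H\"older's inequality once the $A_p$ condition is in the form \eqref{eqn:Ap-norm}. The only point requiring care is the bookkeeping of exponents, in particular the identity $p'-1=\frac{p'}{p}$ that lets the H\"older dual exponent match the power appearing in the $A_p$ condition.
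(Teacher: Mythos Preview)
Your proof is correct and follows essentially the same approach as the paper: split $1=w^{1/p}w^{-1/p}$, apply H\"older's inequality with exponents $p$ and $p'$, and use the $A_p$ condition to bound the $w^{1-p'}$ factor. The paper writes the computation with averages over $Q$ (integrating $\chi_E$ over $Q$) and invokes the $A_p$ definition directly, while you integrate over $E$, enlarge one factor to $Q$, and invoke the norm form \eqref{eqn:Ap-norm}; these are cosmetic differences in bookkeeping, not in substance.
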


\begin{proof}
When $p>1$, this follows at once from H\"older's inequality and the definition of $A_p$:
\begin{multline*}
\frac{|E|}{|Q|} = \avgint_Q \chi_E w^{\frac{1}{p}}w^{-\frac{1}{p}}\,dx
\leq \left(\avgint_Q w\chi_E\,dx\right)^{\frac{1}{p}}
\left(\avgint_Q w^{1-p'}\,dx\right) ^{\frac{1}{p'}} \\
\leq [w]_{A_p}^{\frac{1}{p}} \left(\avgint_Q w\chi_E\,dx\right)^{\frac{1}{p}}
\left(\avgint_Q w\,dx\right)^{-\frac{1}{p}}
= [w]_{A_p}^{\frac{1}{p}} \left(\frac{w(E)}{w(Q)}\right)^{\frac{1}{p}}.
\end{multline*}
When $p=1$ the proof follows directly from the definition of $A_1$. 
\end{proof}

For the third lemma, we introduce a weighted dyadic maximal operator.  Given a weight $\sigma$, let
\[ M^d_\sigma f(x) 
=  \sup_{Q\in \Delta} \avgint_Q |f|\,d\sigma \cdot \chi_Q(x). \]

\begin{lemma} \label{lemma:wtd-max-op}
Given a weight $\sigma$, then for all $1<p\leq \infty$,  there exists
a constant $C(p)>0$ such that for all $f$,  $\|M_\sigma^d f\|_p \leq C(p)\|f\|_p$. 
\end{lemma}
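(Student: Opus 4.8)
Throughout, the norm should be read as the $L^p$ norm with respect to the measure $d\sigma = \sigma\,dx$, and the essential content of the lemma is that $C(p)$ may be taken \emph{independent of $\sigma$}; it is precisely this uniformity that will let us track the $[w]_{A_p}$ dependence when the lemma is applied inside the proof of Theorem~\ref{thm:Ap-max}. The plan is to treat $M^d_\sigma$ as the dyadic (martingale) maximal operator associated to the filtration of dyadic cubes and the measure $\sigma\,dx$, establish the weak $(1,1)$ inequality
\[
\sigma\big(\{ x\in \R^n : M^d_\sigma f(x) > \lambda \}\big) \leq \frac{1}{\lambda} \int_{\R^n} |f|\,d\sigma, \qquad \lambda>0,
\]
note the trivial bound $\|M^d_\sigma f\|_{L^\infty} \leq \|f\|_{L^\infty}$, and conclude the strong $(p,p)$ inequality on $L^p(\sigma)$ for all $1<p\le\infty$ by the Marcinkiewicz interpolation theorem applied with respect to $\sigma\,dx$ (the case $p=\infty$ being the second bound itself), with a constant depending only on $p$ since both endpoint constants equal $1$.

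For the weak-type bound I would simply repeat the Calder\'on--Zygmund stopping-time argument of Lemma~\ref{lemma:CZ-cubes} with $\sigma\,dx$ in place of Lebesgue measure. The only property of the measure used in that construction is that two dyadic cubes are either disjoint or nested, which holds for any measure; so, given $f\in L^1(\sigma)$ and $\lambda>0$, one selects the pairwise disjoint maximal dyadic cubes $\{Q_j\}$ with $\avgint_{Q_j} |f|\,d\sigma > \lambda$, obtaining $\{M^d_\sigma f > \lambda\} = \bigcup_j Q_j$ and hence
\[
\sigma\Big( \bigcup_j Q_j \Big) = \sum_j \sigma(Q_j) < \frac{1}{\lambda} \sum_j \int_{Q_j} |f|\,d\sigma \leq \frac{1}{\lambda}\int_{\R^n} |f|\,d\sigma .
\]
(If $\sigma$ vanishes on a set of positive Lebesgue measure one uses the convention $\avgint_Q|f|\,d\sigma=0$ when $\sigma(Q)=0$; and to handle the minor point that maximal dyadic cubes need not exist when $\sigma$ is a finite measure, one first restricts to dyadic cubes of side length at most $N$ and lets $N\to\infty$ by monotone convergence.) For the interpolation step one writes $f = f\chi_{\{|f|>1\}} + f\chi_{\{|f|\leq1\}}$, splitting $f$ into an $L^1(\sigma)$ piece and a bounded piece, so that no hypothesis on $f$ beyond $f\in L^p(\sigma)$ is required; and since the weak $(1,1)$ constant is exactly $1$ and the $L^\infty$ constant is exactly $1$, the resulting bound is uniform in $\sigma$.

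I do not expect a genuine obstacle here: this is the classical proof that the dyadic maximal function of an arbitrary measure is of weak type $(1,1)$ and strong type $(p,p)$ on that measure, and the dyadic nesting structure makes every step go through verbatim, independently of $\sigma$. The only point that really needs care is stating and exploiting that uniformity, which is the reason the lemma is isolated. If one prefers to avoid invoking Marcinkiewicz, the strong bound can instead be extracted directly from the weak-type inequality via the layer-cake identity $\|M^d_\sigma f\|_{L^p(\sigma)}^p = p\int_0^\infty \lambda^{p-1}\sigma(\{M^d_\sigma f>\lambda\})\,d\lambda$ together with the truncation $f = f\chi_{\{|f|>\lambda/2\}} + f\chi_{\{|f|\le \lambda/2\}}$ at each level $\lambda$; either route yields the same conclusion with $C(p)$ independent of $\sigma$.
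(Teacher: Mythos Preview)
Your proposal is correct and follows essentially the same route as the paper: the paper's proof sketch says precisely to observe the $L^\infty$ bound is immediate, to prove the weak $(1,1)$ inequality with respect to $d\sigma$ by repeating the Calder\'on--Zygmund cube argument of Lemma~\ref{lemma:CZ-cubes}, and then to apply Marcinkiewicz interpolation. Your added remarks on the uniformity of $C(p)$ in $\sigma$, the handling of the degenerate case $\sigma(Q)=0$, and the alternative layer-cake route are helpful elaborations but do not depart from the paper's argument.
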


This inequality is proved exactly as the unweighted norm inequalities
for $M^d$.   When $p=\infty$ it is immediate.   When $p=1$, use
Lemma~\ref{lemma:CZ-cubes} to prove the weak $(1,1)$ inequality, and
then apply Marcinkiewicz interpolation to get the desired inequality.

\begin{proof}[Proof of Theorem~\ref{thm:Ap-max}]
As we indicated above, we will prove the equivalence of (1) and (3)
when $1<p<\infty$.  To prove necessity, fix a cube $Q$ and let $f=w^{1-p'}\chi_Q$.  Then for $x\in Q$, 
\[ M(w^{1-p'}\chi_Q)(x) \geq \avgint_Q w^{1-p'}\,dx, \]
and so by the strong type inequality,
\[ \left(\avgint_Q w^{1-p'}\,dx\right)^p \int_Q w\,dx \leq C\int_Q w^{1-p'}\,dx.  \]
The $A_p$ condition follows at once.

\medskip

To prove sufficiency we adapt a proof originally due to Christ and
Fefferman~\cite{MR684636}.  Let $\sigma=w^{1-p'}$.  By a standard
approximation argument, we may assume $f\geq 0$ and $f\in L_c^\infty$.
Fix $a\geq 2^{n+1}$.  Then, with the notation of Lemma~\ref{lemma:CZ-cubes}, we have that
\begin{align*}
\int_\subRn (M^d f)^pw\,dx
& = \sum_k \int_{\Omega_k\setminus \Omega_{k+1}}  (M^d f)^pw\,dx \\
& \lesssim \sum_k a^{kp} w(\Omega_k) \\
& = \sum_{k,j} a^{kp} w(Q_j^k) \\
& \leq \sum_{k,j} \bigg(\avgint_{Q_j^k} f\sigma^{-1}\sigma\,dx\bigg)^p w(Q_j^k) \\
& = \sum_{k,j} \bigg(\avgint_{Q_j^k} f\sigma^{-1}\,d\sigma\bigg)^p
\left(\avgint_{Q_j^k} w^{1-p'}\,dx\right)^{p-1}\avgint_{Q_j^k} w\,dx \, \sigma(Q_j^k); \\
\intertext{by Lemma~\ref{lemma:Ap-prop} applied to $\sigma \in A_{p'}$
  and by the definition of $A_p$,}
& \lesssim [w]_{A_p} [\sigma]_{A_{p'}} \sum_{k,j}
\bigg(\avgint_{Q_j^k} f\sigma^{-1}\,d\sigma\bigg)^p \sigma(E_j^k) \\
& \leq [w]_{A_p}^{p'} \sum_{k,j} \int_{E_j^k} M^d_\sigma (f\sigma^{-1})^p \,d\sigma \\
& \leq [w]_{A_p}^{p'} \int_\subRn M^d_\sigma (f\sigma^{-1})^p \,d\sigma; \\
\intertext{by Lemma~\ref{lemma:wtd-max-op},}
& \lesssim [w]_{A_p}^{p'} \int_\subRn (f\sigma^{-1})^p \,d\sigma \\
& = [w]_{A_p}^{p'}\int_\subRn f^pw\,dx. 
\end{align*}
\end{proof}

The constant we get in Theorem~\ref{thm:Ap-max} for the strong
$(p,p)$ inequality, in terms of the exponent on the
$A_p$ constant $[w]_{A_p}$,  is sharp: see Buckley~\cite{buckley93} for
examples.  Buckley also proved the strong $(p,p)$ inequality with this
constant using a different proof.  Yet another proof is due to
Lerner~\cite{lernerP}.  The fact that the sharp constant was implicit
in the proof of Christ and Fefferman~\cite{MR684636} seems to have been overlooked
for many years.\footnote{I learned this fact from Kabe Moen, who in
  turn learned it from an anonymous referee.}  The sharp constant for
the maximal operator plays a role in the proof of sharp constant
extrapolation discussed in Section~\ref{section:variations} below.

\section{The fine properties of $A_p$ weights}
\label{section:fine}

In this section we consider some of the fine properties of $A_p$
weights, particularly the reverse H\"older inequality, which yields
another characterization of the class $A_\infty$.

\begin{definition} \label{defn:rhi} Given a weight $w$ and $s>1$, we
  say that $w$ satisfies the reverse H\"older inequality with exponent
  $s$, denoted by $w\in RH_s$, if %
\[   [w]_{RH_s} = \sup_Q \left(\avgint_Q w^s\,dx\right)^{\frac{1}{s}}
\left(\avgint_Q w\,dx\right)^{-1} < \infty,  \]
where the supremum is taken over all cubes $Q$.
\end{definition}

\begin{theorem} \label{thm:rh-ineq}
If $w\in A_\infty$, then there exists $s>1$ such that $w\in RH_s$.  
In fact, there exists $s>1$ depending on $[w]_{A_p}$ such that for
every cube $Q$,
\[  \left(\avgint_Q w^s\,dx\right)^{\frac{1}{s}} \leq 2 \avgint_Q w\,dx. \]
Conversely, if $w\in RH_s$ for some $s>1$, then $w\in A_\infty$.  
\end{theorem}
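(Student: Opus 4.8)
The plan is to prove the forward (and quantitative) direction via the standard self-improving argument built on the Calderón--Zygmund cubes from Lemma~\ref{lemma:CZ-cubes} together with the reverse Jensen characterization of $A_\infty$, and then to prove the converse by a short Hölder-inequality computation showing $RH_s \subset A_\infty$ directly. For the forward direction, fix $w \in A_\infty$ and a cube $Q_0$; by homogeneity (replacing $w$ by $w/\avgint_{Q_0} w\,dx$) we may assume $\avgint_{Q_0} w\,dx = 1$. First I would establish the key geometric decay estimate: there is $\theta \in (0,1)$, depending only on $[w]_{A_\infty}$ and $n$, such that for every dyadic subcube $Q \subset Q_0$ and every measurable $E \subset Q$,
\[
 \frac{|E|}{|Q|} \le \theta \quad \Longrightarrow \quad \frac{w(E)}{w(Q)} \le \tfrac12.
\]
This is where the $A_\infty$ hypothesis enters: using the reverse Jensen inequality on $Q$ and splitting $\log w$ over $E$ and $Q \setminus E$, one gets $w(E)/w(Q) \lesssim (|E|/|Q|)^{\delta}$ for some $\delta>0$ depending on $[w]_{A_\infty}$, and then one chooses $\theta$ small. (Alternatively one can avoid the reverse Jensen inequality and instead quote that $w \in A_q$ for some $q$, then invoke Lemma~\ref{lemma:Ap-prop}; either route gives the same decay estimate, and I would present whichever is cleanest.)

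Next I would run the Calderón--Zygmund stopping-time argument relative to $Q_0$. Apply Lemma~\ref{lemma:CZ-cubes} to $w\chi_{Q_0}$ at heights $\lambda = a^k$ with $a$ chosen large enough (depending on $\theta$, hence on $[w]_{A_\infty}$) that each selected cube $Q_j^k$ at level $k$ satisfies $|Q_j^{k+1}\cap Q_j^k|/|Q_j^k| \le \theta$ when summed over the children, so that by the decay estimate the $w$-measure of the level sets $\Omega_k = \{x \in Q_0 : M^d(w\chi_{Q_0})(x) > a^k\}$ decays geometrically: $w(\Omega_{k+1}) \le \tfrac12 w(\Omega_k)$. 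Combined with the pointwise bound $w(x) \le M^d(w\chi_{Q_0})(x) \lesssim a^k$ on $\Omega_{k-1}\setminus\Omega_k$ (from the upper bound $\avgint_{Q_j^k} w \le 2^n a^k$), summing a geometric series gives $\int_{Q_0} w^s\,dx \lesssim \sum_k a^{ksp} \cdot 2^{-k}\,w(Q_0)$, which converges as long as $s-1$ is small enough that $a^{s} \cdot \tfrac12 < 1$, i.e. $s < 1 + \tfrac{\log 2}{\log a}$. Since $a$ depends only on $[w]_{A_\infty}$ and $n$, so does the admissible exponent $s$. Tracking the implied constant and shrinking $s$ slightly more if necessary yields the clean bound $\big(\avgint_{Q_0} w^s\big)^{1/s} \le 2\avgint_{Q_0} w$.

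For the converse, suppose $w \in RH_s$ for some $s>1$; I would show $w \in A_p$ for a suitable $p$. By Lemma~\ref{lemma:Ap-prop}-type reasoning in reverse: for $E \subset Q$, Hölder's inequality gives $w(E) = \int_E w \le \big(\avgint_Q w^s\big)^{1/s}|E|^{1/s'}|Q|^{1/s} \cdot$ (constant), hence $w(E)/w(Q) \le [w]_{RH_s}(|E|/|Q|)^{1/s'}$; this ``$A_\infty$-type'' estimate is one of the standard equivalent characterizations of $A_\infty$, and I would either cite that equivalence from~\cite[Section~IV.2]{garcia-cuerva-rubiodefrancia85} or note that it gives $w \in A_p$ for $p$ large directly from the definition after one more application of Hölder. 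The main obstacle is purely bookkeeping: getting the constant in the forward direction down to exactly $2$ (rather than some larger dimensional constant) requires choosing $a$ and then $s$ carefully so that the geometric series is dominated appropriately — conceptually routine, but the constant-chasing is the only delicate point, and everything else is a direct application of Lemmas~\ref{lemma:CZ-cubes} and~\ref{lemma:Ap-prop} and the reverse Jensen inequality.
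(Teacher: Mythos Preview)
Your overall architecture for the forward direction (geometric decay of $w(\Omega_k)$ over the Calder\'on--Zygmund level sets) is a legitimate route, but the step on which the whole argument rests is not justified as you have written it, and as stated it is circular.  You claim the power-type decay $w(E)/w(Q)\lesssim (|E|/|Q|)^{\delta}$ either (i) from the reverse Jensen inequality by ``splitting $\log w$,'' or (ii) from $w\in A_q$ together with Lemma~\ref{lemma:Ap-prop}.  Neither works.  For (ii), Lemma~\ref{lemma:Ap-prop} gives the \emph{opposite} inequality $|E|/|Q|\le [w]_{A_p}^{1/p}(w(E)/w(Q))^{1/p}$; the inequality you want is exactly Corollary~\ref{cor:alt-Ainfty}, which the paper derives \emph{from} the reverse H\"older inequality you are trying to prove.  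For (i), the splitting argument combined with Jensen on $E$ and $Q\setminus E$ gives only a \emph{lower} bound $w(E)\ge c(\alpha)w(Q)$ when $|E|=\alpha|Q|$, not the upper bound you need, and certainly not a power-type estimate.

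The fix is simple: apply Lemma~\ref{lemma:Ap-prop} to the \emph{complement}.  If $|E|/|Q|\le \tfrac12$, then $|Q\setminus E|/|Q|\ge \tfrac12$, so Lemma~\ref{lemma:Ap-prop} forces $w(Q\setminus E)/w(Q)\ge (2^p[w]_{A_p})^{-1}$, and hence $w(E)/w(Q)\le \beta:=1-(2^p[w]_{A_p})^{-1}<1$.  This ``$(\alpha,\beta)$'' form of the $A_\infty$ condition is enough: with $a=2^{n+1}$ fixed, the local Calder\'on--Zygmund decomposition gives $|\Omega_{k+1}\cap Q_j^k|\le \tfrac12|Q_j^k|$, hence $w(\Omega_{k+1})\le \beta\,w(\Omega_k)$, and your geometric-series summation goes through with $\beta$ in place of $\tfrac12$.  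Note that you cannot force $\beta=\tfrac12$ by taking $\theta\to 0$: even in the limit the best you get from the complement trick is $\beta=1-[w]_{A_p}^{-1}$.

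For comparison, the paper takes a different path that sidesteps this issue entirely.  It computes $\int_Q M_Q^d(w)^{\epsilon}w\,dx$ via the layer-cake formula, uses Lemma~\ref{lemma:Ap-prop} in the direction it actually gives to replace $w(Q_j^k)$ by $w(E_j^k)$, and obtains a self-referential bound $\int_Q M_Q^d(w)^{\epsilon}w\le |Q|+C\epsilon\int_Q M_Q^d(w)^{\epsilon}w$; choosing $\epsilon$ small allows absorption.  The price is that one must first truncate $w$ (Lemma~\ref{lemma:truncate}) so that the absorbed integral is finite, then pass to the limit by monotone convergence.  Your level-set approach, once repaired, avoids the truncation step since you are summing finite quantities directly; the paper's absorption approach avoids the complement trick and uses Lemma~\ref{lemma:Ap-prop} more transparently.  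For the converse, both you and the paper defer to the equivalences in \cite[Section~IV.2]{garcia-cuerva-rubiodefrancia85}.
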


We will only prove the first half of Theorem~\ref{thm:rh-ineq}.  For
the proof of the converse, which involves defining the $A_p$ and $RH_s$ classes
with respect to arbitrary measures and showing a certain ``duality''
condition, see~\cite[Section~IV.2]{garcia-cuerva-rubiodefrancia85}.

Before proving Theorem~\ref{thm:rh-ineq} we give two corollaries.  The first is important for historical reasons.

\begin{corollary} \label{cor:p-epsilon}
Given $1<p<\infty$, if $w\in A_p$, then there exists $\epsilon>0$ such that $w\in A_{p-\epsilon}$.
\end{corollary}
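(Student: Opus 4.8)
The plan is to deduce $w \in A_{p-\epsilon}$ from the reverse H\"older inequality established in Theorem~\ref{thm:rh-ineq}. The key observation is that the $A_p$ condition is symmetric under the duality $w \mapsto \sigma = w^{1-p'}$, so it suffices to push the \emph{dual weight} $\sigma$ slightly. First I would recall that $w \in A_p$ implies $\sigma = w^{1-p'} \in A_{p'}$, hence $\sigma \in A_\infty$, so by Theorem~\ref{thm:rh-ineq} there exists $s > 1$ (depending on $[\sigma]_{A_{p'}} = [w]_{A_p}^{p'-1}$) with $\sigma \in RH_s$.

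Next I would choose $\epsilon > 0$ small and relate the exponents. Set $q = p - \epsilon$; then $1 - q' = \frac{1-p'}{?}$ — more precisely, I want $w^{1-q'}$ to be comparable, via H\"older on a cube, to a power of the average of $w^{1-p'}$. The cleanest route: writing $q' $ in terms of $p'$, one checks that $1 - q' = (1-p')\cdot \frac{q'}{p'}\cdot\frac{p}{q}$ type identities let us express $\left(\avgint_Q w^{1-q'}\,dx\right)$ as $\left(\avgint_Q \sigma^{r}\,dx\right)$ for some $r = r(\epsilon) > 1$ with $r \to 1$ as $\epsilon \to 0$. Choosing $\epsilon$ small enough that $r \leq s$, the reverse H\"older inequality for $\sigma$ gives
\[
  \left(\avgint_Q \sigma^{r}\,dx\right)^{1/r} \leq C \avgint_Q \sigma\,dx,
\]
and raising to the appropriate power and combining with $\avgint_Q w\,dx$ yields
\[
  \left(\avgint_Q w\,dx\right)\left(\avgint_Q w^{1-q'}\,dx\right)^{q-1} \leq C\,[w]_{A_p}^{\alpha}
\]
for some explicit exponent $\alpha$, uniformly in $Q$. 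That gives $w \in A_{p-\epsilon}$.

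The main obstacle I anticipate is purely bookkeeping: getting the algebra of conjugate exponents right so that the power of $\sigma$ appearing is genuinely $r > 1$ and tends to $1$ as $\epsilon \downarrow 0$, and then tracking how the constant depends on $[w]_{A_p}$ (since $s$ from Theorem~\ref{thm:rh-ineq} depends on $[\sigma]_{A_{p'}}$, which we control). One should verify that $r(\epsilon) > 1$ strictly for $\epsilon > 0$ and $\lim_{\epsilon \to 0^+} r(\epsilon) = 1$, so that the choice $r(\epsilon) \le s$ is achievable. An alternative, avoiding the dual weight, is to apply reverse H\"older directly to $w \in RH_s$ together with a careful application of H\"older's inequality to $\avgint_Q w^{1-q'}$, but the duality argument is shorter since it reduces everything to a single application of Theorem~\ref{thm:rh-ineq}. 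Either way, no new ideas are needed beyond the reverse H\"older inequality and the symmetry of the $A_p$ classes.
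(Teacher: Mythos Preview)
Your approach is correct and essentially identical to the paper's: both apply the reverse H\"older inequality (Theorem~\ref{thm:rh-ineq}) to the dual weight $\sigma = w^{1-p'} \in A_{p'} \subset A_\infty$. The bookkeeping you anticipate is resolved by the identity $w^{1-(p-\epsilon)'} = \sigma^{r}$ with $r = \dfrac{(p-\epsilon)'-1}{p'-1}$; the paper simply chooses $\epsilon$ so that this exponent equals the reverse H\"older exponent $s$ exactly, which avoids the limiting argument.
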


As a consequence of this corollary, the strong $(p,p)$ inequality follows
from the weak $(p,p)$ inequality by Marcinkiewicz interpolation: if
$w\in A_p$, then $w\in A_{p\pm \epsilon}$.  Moreover, by  a covering lemma
argument (using Lemma~\ref{lemma:CZ-cubes}) we can prove the weak
$(p\pm \epsilon, p\pm \epsilon)$ inequalities.  For this classical
approach, see~\cite{duoandikoetxea01,garcia-cuerva-rubiodefrancia85}.
The advantage of the proof of Theorem~\ref{thm:Ap-max} given above is
that it shows that the reverse H\"older inequality is not required.

\begin{proof}
Given $w\in A_p$, $w^{1-p'}\in A_{p'}\subset A_\infty$, so $w^{1-p'} \in RH_s$ for some $s>1$.  Fix $\epsilon>0$ such that 
\[ \frac{(p-\epsilon)'-1}{p'-1} = s.  \]
Then, given any cube $Q$,
\[ \left(\avgint_Q w^{1-(p-\epsilon)'}\,dx\right)^{(p-\epsilon)-1}
= \left( \avgint_Q \big(w^{1-p'}\big)^s\,dx\right)^{\frac{p-1}{s}}
\leq [w]_{RH_s} \left(\avgint_Q w^{1-p'}\,dx\right)^{p-1}; \]
it follows at once that $w\in A_{p-\epsilon}$.  
\end{proof}

The next corollary gives an inequality which is essentially the
opposite of that in Lemma~\ref{lemma:Ap-prop}.  Together, these two
results show that $A_p$ weights behave, in some sense, like constants uniformly at all
scales.

\begin{corollary} \label{cor:alt-Ainfty}
If $w\in A_\infty$, then there exist constants $C,\,\delta>0$ such that for any cube $Q$ and measurable set $E\subset Q$,
\[ \frac{w(E)}{w(Q)} \leq C\left(\frac{|E|}{|Q|}\right)^\delta. \]
\end{corollary}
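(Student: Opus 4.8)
The plan is to deduce this directly from the reverse H\"older inequality. Since $w\in A_\infty$, the first half of Theorem~\ref{thm:rh-ineq} provides an exponent $s>1$ with $w\in RH_s$. Fix a cube $Q$ and a measurable set $E\subset Q$. First I would write $w(E)=\int_Q \chi_E\,w\,dx$ and apply H\"older's inequality with exponents $s$ and $s'$ to obtain $w(E)\le \big(\int_Q w^s\,dx\big)^{1/s}\,|E|^{1/s'}$.

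Next I would convert the first factor into an average: $\big(\int_Q w^s\,dx\big)^{1/s}=|Q|^{1/s}\big(\avgint_Q w^s\,dx\big)^{1/s}$, and then bound it using the membership $w\in RH_s$ by $|Q|^{1/s}[w]_{RH_s}\avgint_Q w\,dx=[w]_{RH_s}\,|Q|^{\frac1s-1}\,w(Q)$. Combining this with the previous inequality and using $|E|^{1/s'}|Q|^{\frac1s-1}=|E|^{1/s'}|Q|^{-1/s'}=(|E|/|Q|)^{1/s'}$ (valid since $\tfrac1s+\tfrac1{s'}=1$) yields $w(E)\le [w]_{RH_s}\,w(Q)\,\big(\tfrac{|E|}{|Q|}\big)^{1/s'}$. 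Thus the corollary holds with $C=[w]_{RH_s}$ and $\delta=1/s'=1-1/s$.

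There is essentially no obstacle here beyond having the reverse H\"older inequality available; the only point worth flagging is that we prove only the forward direction of Theorem~\ref{thm:rh-ineq} in the text, but that direction is precisely the implication $A_\infty\Rightarrow RH_s$ that the argument uses, so no circularity arises. It is also worth remarking, as the statement already hints, that the estimate here and the one in Lemma~\ref{lemma:Ap-prop} are genuine converses of each other: together they show that $w\,dx$ and Lebesgue measure are mutually absolutely continuous with quantitative, scale-invariant control, so that in terms of relative sizes of subsets an $A_\infty$ weight behaves like a constant uniformly at all scales.
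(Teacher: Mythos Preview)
Your proof is correct and is essentially identical to the paper's own argument: both apply H\"older's inequality with exponents $s$ and $s'$ to $w(E)=\int_Q w\chi_E\,dx$, then use $w\in RH_s$ to bound the $L^s$ average of $w$ by $[w]_{RH_s}\avgint_Q w\,dx$, arriving at the same constants $C=[w]_{RH_s}$ and $\delta=1/s'$.
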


\begin{proof}
This follows immediately from H\"older's inequality and the reverse H\"older inequality:  since $w\in RH_s$ for some $s>1$, 
\begin{multline*}
 w(E) = \int_Q w\chi_E \,dx
\leq \left(\avgint_Q w^s\,dx\right)^{\frac{1}{s}}|E|^{\frac{1}{s'}}|Q|^{\frac{1}{s}} \\
\leq [w]_{RH_s} \avgint_Q w\,dx |E|^{\frac{1}{s'}}|Q|^{\frac{1}{s}}
=  [w]_{RH_s}w(Q)\left(\frac{|E|}{|Q|}\right) ^{\frac{1}{s'}}. 
\end{multline*}
This gives the desired inequality with $C=[w]_{RH_s}$ and $\delta=\frac{1}{s'}$.
\end{proof}

The inequality in Corollary~\ref{cor:alt-Ainfty} is often taken as the
definition of the $A_\infty$ condition.  There are many equivalent
definitions: for a thorough treatment of them, see
Duoandikoetxea, Mart\'\i n-Reyes and Ombrosi~\cite{MR3473651}.

To prove the reverse H\"older inequality we need two lemmas.  The
first lets us replace an $A_p$ weight by its bounded truncation.

\begin{lemma} \label{lemma:truncate}
If $w\in A_p$, $1<p<\infty$, then for any $N>0$, $w_N=\min(w,N) \in A_p$ and 
$[w_N]_{A_p} \leq 2^p[w]_{A_p}$.
\end{lemma}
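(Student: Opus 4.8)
The plan is to verify the $A_p$ condition for $w_N = \min(w,N)$ directly from the definition, splitting the analysis of a fixed cube $Q$ into two cases depending on the size of the average of $w$ on $Q$. The key observation is that truncation only decreases $w$, so the "forward" average $\avgint_Q w_N\,dx \leq \avgint_Q w\,dx$ is trivially controlled; the work is all in bounding the "dual" average $\avgint_Q w_N^{1-p'}\,dx$, where truncating $w$ downward makes $w_N^{1-p'}$ \emph{larger} (since $1-p' < 0$). So I need an upper bound on $\avgint_Q w_N^{1-p'}\,dx$ in terms of $\big(\avgint_Q w\,dx\big)$, $\big(\avgint_Q w^{1-p'}\,dx\big)$, and $[w]_{A_p}$.

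First I would split into cases according to whether $\avgint_Q w\,dx \leq N$ or $\avgint_Q w\,dx > N$. In the first case, I claim $w_N^{1-p'} \leq w^{1-p'} + N^{1-p'}$ pointwise (indeed $w_N = w$ where $w\leq N$ and $w_N = N$ where $w > N$, and in the latter region $w^{1-p'} \geq N^{1-p'}$ anyway, so really $w_N^{1-p'} \leq w^{1-p'} + N^{1-p'}\chi_{\{w>N\}} \leq w^{1-p'}+N^{1-p'}$). Since $\avgint_Q w\,dx \leq N$, Jensen's inequality gives $N^{1-p'} \leq \big(\avgint_Q w\,dx\big)^{1-p'} \leq \avgint_Q w^{1-p'}\,dx$, the last step by Jensen applied to the convex function $t\mapsto t^{1-p'}$ (equivalently, to $t \mapsto t^{p-1}$ via Hölder). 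Hence $\avgint_Q w_N^{1-p'}\,dx \leq 2\avgint_Q w^{1-p'}\,dx$, and combining with $\avgint_Q w_N\,dx \leq \avgint_Q w\,dx$ yields the contribution of $Q$ to $[w_N]_{A_p}$ bounded by $2^{p-1}[w]_{A_p} \leq 2^p[w]_{A_p}$.

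In the second case, $\avgint_Q w\,dx > N$. Here I use the crude bound $\avgint_Q w_N^{1-p'}\,dx \leq N^{1-p'}$ (since $w_N \leq N$ forces $w_N^{1-p'}\geq N^{1-p'}$... wait — that is the wrong direction). Instead, since $w_N \geq w_N$ but we need a lower bound on $w_N$ to get an upper bound on $w_N^{1-p'}$: we have $w_N \le N$, so $w_N^{1-p'} \ge N^{1-p'}$, which is useless. The right move: bound $w_N \geq$ nothing pointwise, so instead use $\avgint_Q w_N^{1-p'}\,dx$ directly against $\avgint_Q w_N\,dx \le N$. Then the product $\big(\avgint_Q w_N\,dx\big)\big(\avgint_Q w_N^{1-p'}\,dx\big)^{p-1}$: apply Hölder in reverse — actually I would estimate $\avgint_Q w_N\,dx \le N$ and note $\big(\avgint_Q w_N^{1-p'}\,dx\big)^{p-1} \le \big(\avgint_Q N^{1-p'}\,dx\big)^{p-1}$ is again wrong. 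The clean argument in this case: since $w_N = \min(w,N) \geq \min(w, \avgint_Q w\,dx)$ is still not pointwise helpful. The actual standard trick is: $\avgint_Q w_N \, dx \le \avgint_Q w\, dx$ always, and separately when $\avgint_Q w\,dx > N$ we have $w_N^{1-p'} \le (w \wedge N)^{1-p'}$ and $\avgint_Q w_N\,dx$ is comparable to $N$ up to a factor; more precisely $\avgint_Q w_N\,dx \ge \avgint_{Q\cap\{w>N\}} N\,dx = N\frac{|Q\cap\{w>N\}|}{|Q|}$, while $\avgint_Q w_N^{1-p'}\,dx \le N^{1-p'}\frac{|Q\cap\{w\le N\}|}{|Q|}\cdot(\text{no}) $. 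I will reorganize: I expect the main obstacle to be exactly this second case, and I would handle it by the substitution argument — writing $\sigma_N = w_N^{1-p'}$ and showing $[\sigma_N]_{A_{p'}}\le 2^{p'}[\sigma]_{A_{p'}}$ reduces (by symmetry of $A_p$) to the \emph{first} case applied to the \emph{lower} truncation of $\sigma$, since $\min(w,N)^{1-p'} = \max(w^{1-p'}, N^{1-p'})$ is a truncation of $\sigma$ from below. Thus the two cases are dual to each other, and after establishing the one-case estimate carefully once, the other follows by the duality $w \in A_p \iff w^{1-p'}\in A_{p'}$ with comparable constants; the constant $2^p$ then drops out after tracking the exponent $p'-1$ through the symmetry relation $[\sigma]_{A_{p'}} = [w]_{A_p}^{p'-1}$.
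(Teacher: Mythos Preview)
Your Case 1 is correct and yields the bound $2^{p-1}[w]_{A_p}$. Case 2, however, has a real gap. The duality you invoke does not close the argument: with $\sigma = w^{1-p'}$ and $M = N^{1-p'}$ you correctly note that $w_N^{1-p'}=\max(\sigma,M)$ and $[w_N]_{A_p}=[\max(\sigma,M)]_{A_{p'}}^{p-1}$, but the case condition $\avgint_Q w > N$ does \emph{not} translate into the ``easy'' case $\avgint_Q \sigma \geq M$ for the lower-truncation problem. In fact $\avgint_Q w > N$ only gives, via the $A_p$ condition, $\avgint_Q \sigma \leq [\sigma]_{A_{p'}}\,M$, which points the wrong way. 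So your two case splits (on $\avgint_Q w$ versus $N$, and on $\avgint_Q \sigma$ versus $M$) are not dual to one another, and some cubes are left uncovered.

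The fix is to handle Case 2 directly with the same pointwise inequality you already have, just paired differently. When $\avgint_Q w > N$, use $\avgint_Q w_N \leq N$ (since $w_N\leq N$) together with $\avgint_Q w_N^{1-p'} \leq \avgint_Q w^{1-p'} + N^{1-p'}$ to get
\[
\Big(\avgint_Q w_N\Big)\Big(\avgint_Q w_N^{1-p'}\Big)^{p-1}
\leq N\Big(\avgint_Q w^{1-p'} + N^{1-p'}\Big)^{p-1}
\leq 2^{p-1}\max\!\Big(N\Big(\avgint_Q w^{1-p'}\Big)^{p-1},\,1\Big)
\leq 2^{p-1}[w]_{A_p},
\]
since $N<\avgint_Q w$ and $(1-p')(p-1)=-1$. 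No duality or Jensen is needed here.

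The paper's proof is different and avoids the case split entirely by working in the norm form $|Q|^{-1}\|w^{1/p}\chi_Q\|_p\,\|w^{-1/p}\chi_Q\|_{p'}=[w]_{A_p}^{1/p}$. From $w_N^{-1}\leq N^{-1}+w^{-1}$ and subadditivity of $t\mapsto t^{1/p}$ one gets $w_N^{-1/p}\leq N^{-1/p}+w^{-1/p}$; Minkowski's inequality in $L^{p'}$ then splits the second factor \emph{linearly}, and since $\|w_N^{1/p}\chi_Q\|_p\leq\min(\|w^{1/p}\chi_Q\|_p,\,N^{1/p}|Q|^{1/p})$, each summand pairs perfectly to give $|Q|\cdot 2[w]_{A_p}^{1/p}$. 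Your corrected argument gives the slightly better constant $2^{p-1}$; the paper's is a one-liner.
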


\begin{proof}
Since $w_N^{-1} \leq N^{-1}+w^{-1}$, and since
$(a+b)^{\frac{1}{p}}\leq a ^{\frac{1}{p}}+b ^{\frac{1}{p}}$, for any
cube $Q$, by Minkowski's inequality and~\eqref{eqn:Ap-norm},
\begin{multline*}
 \|w_N^{\frac{1}{p}}\chi_Q\|_p \|w_N^{-\frac{1}{p}}\chi_Q\|_{p'}
\leq \|N^{\frac{1}{p}}\chi_Q\|_p\|N^{-\frac{1}{p}}\chi_Q\|_{p'}+
\|w^{\frac{1}{p}}\chi_Q\|_p \|w^{-\frac{1}{p}}\chi_Q\|_{p'} \\
\leq |Q|+ [w]^{\frac{1}{p}}_{A_p}|Q| \leq 2
[w]^{\frac{1}{p}}_{A_p}|Q|. 
\end{multline*}
\end{proof}

The second lemma is a
local version of Lemma~\ref{lemma:CZ-cubes} that is proved in
exactly the same way.  Given a fixed cube $Q$, let $\Delta(Q)$ be the
set of all cubes that are gotten by bisecting  the
sides of $Q$, and then repeating this process inductively on each sub-cube so
formed.  For $x\in Q$ define the local dyadic maximal operator by 
\[ M^d_Qf(x) = \sup_{P\in \Delta(Q)} \avgint_P |f|\,dy \cdot \chi_P(x). \]

\begin{lemma} \label{lemma:CZ-cubes-local}
Given a cube $Q$, let $w$ be a weight such that $\avgint_Qw\,dx=1$.
Fix $a\geq 2^{n+1}$; then for each $k\geq 0$ we can write the set 
\[  \Omega_k = \{ x\in Q : M^d_Q w(x)> a^k \} = \bigcup_j Q_j^k, \]
where for each $k$ the cubes $Q_j^k \in \Delta(Q)$ are disjoint and satisfy
\[ a^k < \avgint_{Q_j^k} w\,dx \leq 2^n a^k. \]
Further, if $E_j^k = Q_j^k \setminus \Omega_{k+1}$, then the $E_j^k$
are pairwise disjoint and $|E_j^k|\geq \frac{1}{2}|Q_j^k|$.
\end{lemma}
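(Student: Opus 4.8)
The plan is to run the Calder\'on--Zygmund stopping-time argument inside $Q$ relative to the grid $\Delta(Q)$, in exact parallel with Lemma~\ref{lemma:CZ-cubes}, using the normalization $\avgint_Q w\,dx = 1$ in place of the integrability of $w$. Fix $k\geq 0$. Since $a\geq 2^{n+1}>1$ and $k\geq 0$, we have $a^k\geq 1 = \avgint_Q w\,dx$, so $Q$ itself does not satisfy $\avgint_Q w\,dx > a^k$. Consequently, for every $P\in\Delta(Q)$ with $\avgint_P w\,dx>a^k$, the finite chain of $\Delta(Q)$-ancestors of $P$ running up to $Q$ contains a largest member for which this strict inequality still holds; I let $\{Q_j^k\}_j$ be the collection of all such maximal cubes. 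By maximality they are pairwise disjoint, and each has a $\Delta(Q)$-parent $\widetilde{Q_j^k}\subsetneq Q$ with $\avgint_{\widetilde{Q_j^k}} w\,dx\leq a^k$; since $|\widetilde{Q_j^k}| = 2^n|Q_j^k|$, this gives $\avgint_{Q_j^k} w\,dx\leq 2^n a^k$, which combined with $\avgint_{Q_j^k} w\,dx > a^k$ is the asserted two-sided estimate.

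Next I would verify that $\Omega_k = \bigcup_j Q_j^k$. The inclusion $\supset$ is immediate, since on $Q_j^k$ one has $M^d_Q w \geq \avgint_{Q_j^k}w\,dx > a^k$. For $\subset$: if $M^d_Q w(x) > a^k$ there is $P\in\Delta(Q)$ with $x\in P$ and $\avgint_P w\,dx > a^k$, and by construction $P$ lies inside its maximal ancestor with average $>a^k$, i.e. inside some $Q_j^k$. The same observation, applied to $Q_i^{k+1}$ (whose average exceeds $a^{k+1}>a^k$), shows each $Q_i^{k+1}$ sits inside some $Q_j^k$, hence $\Omega_{k+1}\subset\Omega_k$. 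This nesting is exactly what makes the $E_j^k = Q_j^k\setminus\Omega_{k+1}$ pairwise disjoint across \emph{all} $k$ (for fixed $k$ it follows from disjointness of the $Q_j^k$): if $k<l$ then $E_i^l\subset Q_i^l\subset\Omega_l\subset\Omega_{k+1}$, which is disjoint from $E_j^k$ by definition of $E_j^k$.

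For the volume bound I would estimate $|Q_j^k\cap\Omega_{k+1}|$. Each $Q_i^{k+1}$ is dyadic, hence either contained in $Q_j^k$ or disjoint from it, so $Q_j^k\cap\Omega_{k+1}$ is the disjoint union of those $Q_i^{k+1}$ contained in $Q_j^k$; using $\avgint_{Q_i^{k+1}} w\,dx > a^{k+1}$ on each of these and then $\avgint_{Q_j^k} w\,dx\leq 2^n a^k$,
\[ |Q_j^k\cap\Omega_{k+1}| \leq \frac{1}{a^{k+1}}\sum_{Q_i^{k+1}\subset Q_j^k}\int_{Q_i^{k+1}} w\,dx \leq \frac{1}{a^{k+1}}\int_{Q_j^k} w\,dx \leq \frac{2^n a^k}{a^{k+1}}|Q_j^k| = \frac{2^n}{a}|Q_j^k| \leq \tfrac12|Q_j^k|, \]
the last inequality by $a\geq 2^{n+1}$. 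Hence $|E_j^k| = |Q_j^k| - |Q_j^k\cap\Omega_{k+1}| \geq \tfrac12|Q_j^k|$.

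I do not expect a genuine obstacle here; the argument is structurally identical to Lemma~\ref{lemma:CZ-cubes}. The only place the hypothesis $\avgint_Q w\,dx = 1$ is needed (rather than a global integrability assumption on $w$) is to guarantee that the stopping cubes in the first paragraph are well defined, i.e. that the ascending chain of ancestors with average $>a^k$ terminates below $Q$; and the only place the choice $a\geq 2^{n+1}$ is used is the final volume estimate — both exactly as in the global statement.
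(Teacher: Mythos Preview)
Your argument is correct and is exactly the standard stopping-time proof the paper intends; the paper itself gives no proof, only the remark that the lemma ``is proved in exactly the same way'' as Lemma~\ref{lemma:CZ-cubes}. One cosmetic point: when $Q_j^k$ is a first-generation child of $Q$ its parent is $Q$ itself (which is not in $\Delta(Q)$ and not $\subsetneq Q$), so write $\widetilde{Q_j^k}\subseteq Q$; the upper bound then follows from the normalization $\avgint_Q w\,dx=1\leq a^k$ just as you say.
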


\begin{proof}[Proof of Theorem~\ref{thm:rh-ineq}]
  Fix $w\in A_\infty$; we will assume for the moment that $w$ is
  bounded.  Fix a cube $Q$; by homogeneity, without loss of generality
  we may assume that $\avgint_Q w\,dx =1$.  Let $0<\epsilon<1$; we will
  fix the precise value below.  Then
\begin{align*}
\int_Q M_Q^d(w)^\epsilon w\,dx 
& = \int_0^\infty \epsilon t^{\epsilon-1} w(\{ x\in Q :
  M_Q^dw(x)>t\})\,dt \\
& = \int_0^1 \ldots + \int_1^\infty \ldots \\
& \leq w(Q) + \epsilon\sum_{k=0}^\infty w(\Omega_k)
  \int_{a^k}^{a^{k+1}} t^{\epsilon-1}\,dt \\
& \leq |Q| + \epsilon \sum_{k,j} a^{\epsilon(k+1)}w(Q_j^k)\int_{a^k}^{a^{k+1}} t^{-1}\,dt \\
& = |Q| + \epsilon a^\epsilon \log(a) \sum_{k,j} a^{k\epsilon}
  w(Q_j^k); \\
\intertext{by Lemma~\ref{lemma:Ap-prop},}
& \leq |Q| + \epsilon a^\epsilon \log(a) 2^p[w]_{A_p} \sum_{k,j} \bigg(\avgint_{Q_j^k} w\,dx\bigg)^\epsilon w(E_j^k) \\
& \leq |Q| +C(a,p)\epsilon \sum_{k,j} \int_{E_j^k} M_Q^d(w)^\epsilon w\,dx \\
& \leq |Q|+C(a,p)\epsilon \int_Q M_Q^d(w)^\epsilon w\,dx. 
\end{align*}
Now fix $\epsilon>0$ sufficiently small that
$C(\epsilon)=\frac{1}{2}$.  Since $w$ is bounded, 
\[ \int_{Q} M_Q^d(w)^\epsilon w\,dx <\infty.\]
Therefore, by  rearranging terms and by the Lebesgue differentiation
theorem we have that 
\[ \frac{1}{2}\int_Q w^{1+\epsilon}\,dx \leq \frac{1}{2}\int_Q M_Q^d(w)^\epsilon w\,dx \leq |Q|. \]
The desired inequality thus holds for bounded weights.  

Finally, given an
arbitrary weight $w$, by Lemma~\ref{lemma:truncate} and the previous
argument we have that
the reverse H\"older inequality holds for $w_N$ with a constant
independent of~$N$.  Hence, by the monotone convergence theorem it holds
for $w$.
\end{proof}

It is possible to give a very sharp estimate of the exponent $s$.  To do so we need to introduce another condition equivalent to the $A_\infty$ condition.  We say that a weight $w$ satisfies the Fujii-Wilson $A_\infty$ condition if 
\[ [w]_{A_\infty}' = \sup_Q w(Q)^{-1}\int_Q M(w\chi_Q)\,dx < \infty,  \]
where the supremum is taken over all cubes $Q$.  This condition is
equivalent to $w\in A_\infty$, a fact discovered independently by
Fujii~\cite{MR0481968} and Wilson~\cite{MR883661}.  It has the
advantage that it is generally much smaller than the other $A_\infty$
constants: see Beznosova and Reznikov~\cite{MR3234812}.  Using this
definition, Hyt\"onen and P\'erez~\cite{MR3092729} showed that
\[ s= 1 + \frac{1}{c(n) [w]_{A_\infty}'}. \]
Our proof of Theorem~\ref{thm:rh-ineq} is adapted from theirs; it is somewhat simpler since we do not get the sharp constant.

\medskip

If $w\in A_\infty$, then there exist $1<p,\,s<\infty$ such that $w\in
A_p$ and $w\in RH_s$.  However, there is no direct connection between
these two exponents:  The example of power weights shows that given
any pair of $p,\,s$, there exists $w\in A_p \cap RH_s$.   However, as
the next result shows, there is a weaker connection. This proposition
will play a role in restricted range extrapolation:  see
Section~\ref{section:restricted} below.

\begin{prop} \label{prop:Ap-RHs}
Given $1<p,\,s<\infty$ and a weight $w$, $w\in A_p\cap RH_s$ if and only if $w^s\in A_q$, where $q=s(p-1)+1$.  
\end{prop}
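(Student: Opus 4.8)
The plan is to prove both implications by direct computation with the defining suprema, translating between the $A_p\cap RH_s$ conditions for $w$ and the $A_q$ condition for $w^s$. The key observation is bookkeeping: if $q = s(p-1)+1$, then $q-1 = s(p-1)$, so $(q-1)/s = p-1$; and one computes the dual exponent relation $1-q' = \frac{-(q-1)'}{\,\cdot\,}$... more usefully, $w^{s(1-q')}$ should be related to a power of $w^{1-p'}$. Concretely, $q' = \frac{q}{q-1} = \frac{s(p-1)+1}{s(p-1)}$, so $q'-1 = \frac{1}{s(p-1)}$, whence $s(q'-1) = \frac{1}{p-1} = p'-1$. Thus raising the ``dual side'' of the $A_q$ condition for $w^s$ to a suitable power lands exactly on the dual side of the $A_p$ condition for $w$ — but only after an application of H\"older (or the reverse H\"older inequality) to pass between $\avgint_Q (w^s)^{1-q'}$ and $\big(\avgint_Q w^{1-p'}\big)^{\text{something}}$.

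First I would write out $[w^s]_{A_q}$ for a fixed cube $Q$ as
\[
\left(\avgint_Q w^s\,dx\right)\left(\avgint_Q w^{s(1-q')}\,dx\right)^{q-1}.
\]
For the forward direction ($w\in A_p\cap RH_s \Rightarrow w^s\in A_q$): on the first factor I apply the $RH_s$ inequality to get $\avgint_Q w^s\,dx \le [w]_{RH_s}^s\big(\avgint_Q w\,dx\big)^s$. On the second factor, since $s(1-q') = 1-p' < 0$ wait — I should double-check the sign and instead use convexity/Jensen in the correct direction: because $s \ge 1$ and the exponent $1-q'$ is negative, I would use that $t \mapsto t^{1/s}$... the cleanest route is to note $w^{s(1-q')} = (w^{1-p'})^{?}$ is false in general, so instead apply H\"older's inequality with exponent $s$ (or its conjugate) to compare $\avgint_Q w^{s(1-q')}\,dx$ with $\big(\avgint_Q w^{1-p'}\,dx\big)^{s}$ up to the right constant, exploiting $s(q'-1) = p'-1$ established above. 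Combining, $[w^s]_{A_q} \le [w]_{RH_s}^s\,[w]_{A_p}^{s}$ (modulo getting the exponents precisely right), which gives membership with a quantitative bound.

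For the converse ($w^s\in A_q \Rightarrow w\in A_p\cap RH_s$): from $w^s\in A_q$ I first extract $RH_s$ for $w$ almost for free — the first factor of $[w^s]_{A_q}$ is finite means $\avgint_Q w^s\,dx \le C\big(\avgint_Q w^{s(1-q')}\,dx\big)^{-(q-1)}$, and I pair this with H\"older's inequality $\avgint_Q w\,dx \ge \big(\avgint_Q w^{s(1-q')}\,dx\big)^{1/\theta}$ for the right $\theta$ to conclude $\big(\avgint_Q w^s\big)^{1/s} \lesssim \avgint_Q w$. Then for $w\in A_p$: I need $\avgint_Q w\,dx \cdot \big(\avgint_Q w^{1-p'}\,dx\big)^{p-1}$ bounded; I estimate $\avgint_Q w\,dx$ from above by $\big(\avgint_Q w^s\,dx\big)^{1/s}$ (Jensen, since $s>1$) and $\avgint_Q w^{1-p'}\,dx = \avgint_Q w^{s(1-q')}\,dx$... no — again $1-p' \ne s(1-q')$ in general; rather $1-p' = \tfrac{1}{s}\cdot s(1-q') \cdot$ (constant) fails, so I instead use H\"older to bound $\avgint_Q w^{1-p'}\,dx$ above by a power of $\avgint_Q w^{s(1-q')}\,dx = \avgint_Q (w^s)^{1-q'}\,dx$, which is controlled by the $A_q$ condition for $w^s$.

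\textbf{Main obstacle.} The only real subtlety is organizing the chain of H\"older inequalities so the exponents match exactly — i.e., verifying the arithmetic identity $q = s(p-1)+1 \iff s(q'-1) = p'-1 \iff \tfrac{q-1}{s} = p-1$ and using it consistently to identify which power of which average appears. There is no deep analytic content beyond the reverse H\"older inequality (Theorem~\ref{thm:rh-ineq}) and H\"older's inequality itself; the proof is essentially a change-of-variable in the exponents, and the ``hard part'' is purely keeping the bookkeeping straight and checking the two directions are genuinely inverse to each other.
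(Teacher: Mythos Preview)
Your overall approach matches the paper's exactly: bound the $A_q$ expression for $w^s$ cube-by-cube using the $RH_s$ inequality on the first factor, and for the converse use H\"older in place of reverse H\"older. The quantitative bound $[w^s]_{A_q}\le [w]_{RH_s}^s[w]_{A_p}^s$ you arrive at is precisely what the paper obtains.

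However, you have tangled yourself in the bookkeeping at the one place it matters. You correctly derive $s(q'-1)=p'-1$, but then twice assert (``$w^{s(1-q')}=(w^{1-p'})^{?}$ is false in general''; ``again $1-p'\ne s(1-q')$ in general'') the opposite of what this says. Multiplying your identity by $-1$ gives $s(1-q')=1-p'$ \emph{exactly}, so $w^{s(1-q')}=w^{1-p'}$ as functions, and $\avgint_Q w^{s(1-q')}\,dx=\avgint_Q w^{1-p'}\,dx$ with no inequality needed. Combined with $q-1=s(p-1)$, the second factor in the $A_q$ expression is literally $\big(\avgint_Q w^{1-p'}\,dx\big)^{s(p-1)}$. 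All the H\"older gymnastics you propose on the dual side are unnecessary in the forward direction (and would be vacuous, since you would be comparing an integral to itself). In the converse direction the same identification makes the $A_p$ bound a one-line application of H\"older on the \emph{first} factor only: $\avgint_Q w\,dx\le\big(\avgint_Q w^s\,dx\big)^{1/s}$, and then the $A_q$ condition closes the estimate directly. So the ``main obstacle'' you flag is a phantom created by not reading off the consequence of the identity you already proved.
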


\begin{proof}
Suppose first that $w\in A_p\cap RH_s$.  By the definition of $q$ we have that $p'-1=s(q'-1)$.  Hence, for any cube $Q$,
\begin{multline*}
\left(\avgint_Q w^s\,dx\right)\left(\avgint_Q w^{s(1-q')}\,dx\right)^{q-1} \\
\leq [w]_{RH_s}^s\left(\avgint_Q w\,dx\right)^{s} \left(\avgint_Q w^{1-p'}\,dx\right)^{s(p-1)}
\leq [w]_{RH_s}^s[w]_{A_p}^s. 
\end{multline*}
Thus, $w^s \in A_q$.  

Conversely, if $w^s\in A_q$, then essentially the same argument using H\"older's inequality instead of the reverse H\"older inequality shows that $w\in A_p$.  Moreover, again given any cube $Q$, by the definition of $A_q$ and H\"older's inequality,
\[ \avgint_Q w^s\,dx = \avgint_Q w^s\,dx\left(\avgint_Q w^{s(1-q')}\,dx\right)^{q-1}
\left(\avgint_Q w^{1-p'}\,dx\right)^{-s(p-1)}
\leq [w^s]_{A_q}\left(\avgint_Q w\,dx\right)^s.  \]
Hence, $w\in RH_s$.  
\end{proof}

\medskip

As a final application of the reverse H\"older inequality we will
prove a multilinear version.  This inequality will be used in
Section~\ref{section:bilinear} below when we consider weighted norm
inequalities for bilinear operators.  This result was first proved
in~\cite{cruz-uribe-neugebauer95} in the bilinear case.  Recently, a
simpler proof for the general, multilinear case was given
in~\cite{DCU-KM-2017}.  To simplify the presentation, we give this proof in the bilinear case.

\begin{prop} \label{prop:multilinear-rhi} Given
  $w_1,\,w_2\in A_\infty$, suppose $w_1\in RH_{s}$ and
  $w_2 \in RH_{s'}$ for some $1<s<\infty$.  Then there exists $C>0$
  such that for every cube $Q$,
\[ \left(\avgint_Q w_1^{s}\,dx\right)^{\frac{1}{s}}
\left(\avgint_Q w_2^{s'}\,dx\right)^{\frac{1}{s'}}
\leq C\avgint_Q w_1w_2\,dx. \]
\end{prop}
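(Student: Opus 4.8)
The plan is to bypass any Calder\'on--Zygmund decomposition and run a short argument through the reverse Jensen inequality that characterizes $A_\infty$. The point is that the quantity we want to bound \emph{from below}, namely $\avgint_Q w_1w_2\,dx$, is at least a certain geometric mean of averages by (ordinary) Jensen's inequality, and the reverse Jensen inequality then lets one pass from those geometric means back to $\big(\avgint_Q w_1^s\,dx\big)^{1/s}$ and $\big(\avgint_Q w_2^{s'}\,dx\big)^{1/s'}$ --- provided $w_1^s$ and $w_2^{s'}$ themselves belong to $A_\infty$. This is the argument of~\cite{DCU-KM-2017}.

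The first thing to establish is that $w_1^s,w_2^{s'}\in A_\infty$. Since $w_1\in A_\infty$ there is some $1<p_1<\infty$ with $w_1\in A_{p_1}$, so $w_1\in A_{p_1}\cap RH_s$, and Proposition~\ref{prop:Ap-RHs} gives $w_1^s\in A_{q_1}$ with $q_1=s(p_1-1)+1$; in particular $w_1^s\in A_\infty$. The same reasoning applied to $w_2$ and $s'$ shows $w_2^{s'}\in A_\infty$. Write $K_1=[w_1^s]_{A_\infty}$ and $K_2=[w_2^{s'}]_{A_\infty}$ for the (finite) reverse Jensen constants.

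Now fix a cube $Q$. By the concavity of the logarithm (Jensen's inequality),
\[
\avgint_Q w_1w_2\,dx
= \avgint_Q (w_1^s)^{1/s}(w_2^{s'})^{1/s'}\,dx
\geq \exp\!\left(\tfrac1s\avgint_Q\log(w_1^s)\,dx+\tfrac1{s'}\avgint_Q\log(w_2^{s'})\,dx\right),
\]
and the right-hand side factors as $\big[\exp\!\big(\avgint_Q\log w_1^s\,dx\big)\big]^{1/s}\big[\exp\!\big(\avgint_Q\log w_2^{s'}\,dx\big)\big]^{1/s'}$. Applying the reverse Jensen inequality to $w_1^s$ and to $w_2^{s'}$ separately,
\[
\exp\!\Big(\avgint_Q\log w_1^s\,dx\Big)\geq K_1^{-1}\avgint_Q w_1^s\,dx,\qquad
\exp\!\Big(\avgint_Q\log w_2^{s'}\,dx\Big)\geq K_2^{-1}\avgint_Q w_2^{s'}\,dx.
\]
Substituting these in and rearranging gives
\[
\Big(\avgint_Q w_1^s\,dx\Big)^{1/s}\Big(\avgint_Q w_2^{s'}\,dx\Big)^{1/s'}
\leq K_1^{1/s}K_2^{1/s'}\avgint_Q w_1w_2\,dx,
\]
which is the asserted inequality with $C=K_1^{1/s}K_2^{1/s'}$.

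The main --- indeed the only --- obstacle is the first step: one must know that the reverse H\"older hypothesis, together with $w_i\in A_\infty$, promotes $w_i^{s_i}$ into $A_\infty$, so that the reverse Jensen inequality may legitimately be applied to it; this is precisely where Proposition~\ref{prop:Ap-RHs} and both standing hypotheses on each $w_i$ are used. After that the estimate is a two-line computation, and it extends verbatim to the $m$-linear statement with exponents satisfying $\tfrac1{s_1}+\cdots+\tfrac1{s_m}=1$. We note in passing that the reverse direction --- domination of $\avgint_Q w_1w_2\,dx$ by the product of the two averaged norms --- is just H\"older's inequality and requires no hypotheses at all.
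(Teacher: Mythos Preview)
Your argument is correct. Both proofs begin the same way, invoking Proposition~\ref{prop:Ap-RHs} to show that $w_1^s,\,w_2^{s'}\in A_\infty$, but they diverge after that. The paper does not use the reverse Jensen characterization of $A_\infty$ directly; instead it performs a further ``openness'' reduction: choosing $q$ so that $w_1^s,\,w_2^{s'}\in A_q$ and then (again via Proposition~\ref{prop:Ap-RHs}) an $0<r<1$ with $w_1^{rs},\,w_2^{rs'}\in A_2\cap RH_{1/r}$, after which the inequality follows from a chain of three H\"older-type estimates (one reverse H\"older, one use of $A_2$, and two ordinary H\"older inequalities). Your route through Jensen and reverse Jensen is shorter, avoids the auxiliary exponent $r$, and yields the explicit constant $C=[w_1^s]_{A_\infty}^{1/s}[w_2^{s'}]_{A_\infty}^{1/s'}$; the paper's approach, by contrast, stays entirely within the $A_p$/$RH_s$ calculus developed earlier in the section and so does not need to appeal to the reverse Jensen form of $A_\infty$ as a black box. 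Both extend without change to the $m$-linear case.
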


\begin{proof}
  Since $w_1,\,w_2 \in A_\infty$, by Proposition~\ref{prop:Ap-RHs},
  $w_1^s,\,w_2^{s'}\in A_\infty$.  Moreover, since the $A_p$ classes
  are nested, we may assume that they
  are both in $A_q$ for some $q>1$.  Therefore, again by
  Proposition~\ref{prop:Ap-RHs}, there exists $0<r<1$, such that
  $w_1^{rs},\,w_2^{rs'}\in A_2\cap RH_{\frac{1}{r}}$.  If we use
  these two conditions and then H\"older's inequality three times, we
  get that for every cube $Q$,
\begin{align*}
\left(\avgint_Q w_1^{s}\,dx\right)^{\frac{1}{s}}
\left(\avgint_Q w_2^{s'}\,dx\right)^{\frac{1}{s'}}
& \lesssim 
\left(\avgint_Q w_1^{rs}\,dx\right)^{\frac{1}{rs}}
\left(\avgint_Q w_2^{rs'}\,dx\right)^{\frac{1}{rs'}} \\
& \lesssim 
\left(\avgint_Q w_1^{-rs}\,dx\right)^{-\frac{1}{rs}}
\left(\avgint_Q w_2^{-rs'}\,dx\right)^{-\frac{1}{rs'}} \\
& \leq 
\left(\avgint_Q w_1^{-r}w_2^{-r}\,dx\right)^{-\frac{1}{r}} \\
& \leq 
\left(\avgint_Q w_1^{r}w_2^{r}\,dx\right)^{\frac{1}{r}} \\
& \leq
\avgint_Q w_1w_2\,dx. 
\end{align*}
\end{proof}

\section{Factorization}
\label{section:factorization}

In this section we prove the Jones factorization theorem.  At the
heart of the proof is the Rubio de Francia
iteration algorithm, which allows us, given an arbitrary weight $u$,
to construct an $A_1$ weight $w$ that is the ``same size'' as $u$
in a precisely specified way.    The iteration algorithm also plays a
central role in the proof of extrapolation as we will see in
Section~\ref{section:extrapolation} below.

\begin{theorem} \label{thm:rdf-algorithm}
Fix $1<p<\infty$ and $w\in A_p$.  For any non-negative function $h\in
L^p(w)$, define
\[ \Rh h(x) = \sum_{k=0}^\infty \frac{M^k h(x)}{2^k \|M\|_{L^p(w)}^k}, \]
where for $k>0$, $M^kh = M \circ \cdots \circ Mh$ denotes $k$ iterations of
the maximal operator  and $M^0h=h$.  Then:
\begin{enumerate}

\item $h(x) \leq \Rh h(x)$;

\item $\|\Rh h \|_{L^p(w)} \leq 2 \|h\|_{L^p(w)}$;

\item $\Rh h \in A_1$ and $[\Rh h]_{A_1} \leq 2\|M\|_{L^p(w)}$. 

\end{enumerate}
\end{theorem}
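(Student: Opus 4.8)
The plan is to verify the three properties essentially by direct computation, exploiting the fact that $w\in A_p$ guarantees $M$ is bounded on $L^p(w)$ by Theorem~\ref{thm:Ap-max}, so the normalizing constant $\|M\|_{L^p(w)}$ is finite and positive. First I would establish property (2), since it is needed to make sense of everything else: the series defining $\Rh h$ converges in $L^p(w)$. By Minkowski's inequality for the (infinite) sum together with the bound $\|M^k h\|_{L^p(w)} \leq \|M\|_{L^p(w)}^k \|h\|_{L^p(w)}$, one gets
\[
\|\Rh h\|_{L^p(w)} \leq \sum_{k=0}^\infty \frac{\|M^k h\|_{L^p(w)}}{2^k\|M\|_{L^p(w)}^k} \leq \sum_{k=0}^\infty \frac{\|h\|_{L^p(w)}}{2^k} = 2\|h\|_{L^p(w)}.
\]
In particular $\Rh h$ is finite $w$-a.e., hence a.e. since $w>0$ a.e.

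Property (1) is then immediate: the $k=0$ term of the series is exactly $h(x)$, and all the remaining terms $M^k h(x)/(2^k\|M\|_{L^p(w)}^k)$ are non-negative (as $h\geq 0$ and $M$ preserves non-negativity), so $h(x) \leq \Rh h(x)$.

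For property (3), the key observation is that $M$ is sublinear and monotone, so applying $M$ to the defining series and using subadditivity of $M$ across the sum (justified since the partial sums increase to $\Rh h$ and $M$ commutes with monotone limits, or just by applying $M$ termwise and summing) yields
\[
M(\Rh h)(x) \leq \sum_{k=0}^\infty \frac{M(M^k h)(x)}{2^k\|M\|_{L^p(w)}^k} = \sum_{k=0}^\infty \frac{M^{k+1}h(x)}{2^k\|M\|_{L^p(w)}^k} = 2\|M\|_{L^p(w)} \sum_{j=1}^\infty \frac{M^{j}h(x)}{2^{j}\|M\|_{L^p(w)}^{j}} \leq 2\|M\|_{L^p(w)}\,\Rh h(x),
\]
where the last step re-indexes $j=k+1$ and then bounds the tail of the series by the whole series (all terms non-negative). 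This is precisely the pointwise inequality $M(\Rh h) \leq 2\|M\|_{L^p(w)}\,\Rh h$, which by the characterization of $A_1$ recalled after the $A_1$ definition says $\Rh h\in A_1$ with $[\Rh h]_{A_1} \leq 2\|M\|_{L^p(w)}$.

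The main point requiring care—really the only subtlety—is justifying the interchange of $M$ with the infinite sum in property (3): one should note that the partial sums $S_N = \sum_{k=0}^N M^k h/(2^k\|M\|_{L^p(w)}^k)$ increase monotonically to $\Rh h$, so $M(S_N) \leq M(\Rh h)$ and also $M(\Rh h) = \lim_N M(S_N)$ by monotone convergence inside the supremum defining $M$; combined with the finite subadditivity $M(S_N) \leq \sum_{k=0}^N M^{k+1}h/(2^k\|M\|_{L^p(w)}^k)$ this gives the desired termwise bound in the limit. Everything else is routine.
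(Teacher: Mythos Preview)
Your proof is correct and follows essentially the same approach as the paper: Minkowski's inequality for (2), the $k=0$ term for (1), and subadditivity of $M$ plus a shift of index for (3). You supply a bit more justification than the paper does (the monotone-limit argument for passing $M$ through the infinite sum, and the observation that (2) guarantees a.e.\ convergence), but the underlying argument is identical.
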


\begin{proof} 
If we take the first term in the sum, (1) is immediate.  To prove (2)
we apply Minkowski's inequality:
\[ \|\Rh h \|_{L^p(w)} \leq \sum_{k=0}^\infty 
\frac{\|M^k h\|_{L^p(w)}}{2^k\|M\|_{L^p(w)}^k}
\leq \sum_{k=0}^\infty  2^{-k}\|h\|_{L^p(w)} =  2 \|h\|_{L^p(w)}. \]
Finally, (3) holds since the maximal operator is subadditive:
\[ M(\Rh h)(x) \leq \sum_{k=0}^\infty \frac{M^{k+1}h(x)}{2^k
    \|M\|_{L^p(w)}^k}
\leq 2\|M\|_{L^p(w)} \Rh h(x). \]
\end{proof}

We note that the existence of an $A_1$ majorant for a function $h$ is,
somewhat surprisingly, linked to $h$ being an element of the set
$\bigcup_{p>1}L^p$.  For a precise description of this connection, see
Knese, McCarthy and Moen~\cite{Knese:2016bd}.  

\medskip

An important feature of the proof of Theorem~\ref{thm:rdf-algorithm} is that
we only use the fact that the underlying operator is the maximal
operator to prove that $\Rh h \in A_1$.  If we replace $M$ by a
positive, sublinear operator $S$ that is bounded on $L^p(w)$, then the
same proof yields (1) and (2) and the $A_1$-type property that
$S(\Rh h) \leq 2\|S\|_{L^p(w)} \Rh h$.  This simple
generalization lets us prove the Jones factorization
theorem.

\begin{theorem} \label{thm:jones}
For $1<p<\infty$, a weight $w$ is in $A_p$ if and only if there exist
$w_1,\, w_2 \in A_1$ such that $w=w_1w_2^{1-p}$.  
\end{theorem}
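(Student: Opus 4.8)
The plan is to prove the easy direction first and then use the Rubio de Francia iteration algorithm (Theorem~\ref{thm:rdf-algorithm}) for the hard direction. For sufficiency, suppose $w = w_1 w_2^{1-p}$ with $w_1, w_2 \in A_1$. Then $w^{1-p'} = w_1^{1-p'} w_2^{(1-p)(1-p')} = w_1^{1-p'} w_2$, since $(1-p)(1-p') = 1$. For any cube $Q$, I would estimate $\avgint_Q w\,dx$ by pulling $\avgint_Q w_1\,dx \leq [w_1]_{A_1}\essinf_Q w_1$ and $\avgint_Q w_2^{1-p}\,dx$ via Jensen / the $A_1$ lower bound on $w_2$; more cleanly, using the $L^p$--$L^{p'}$ form~\eqref{eqn:Ap-norm}, one shows $\avgint_Q w \,dx \lesssim (\essinf_Q w_1)(\essinf_Q w_2)^{1-p}$ and $(\avgint_Q w^{1-p'}\,dx)^{p-1} \lesssim (\essinf_Q w_1)^{-1}(\essinf_Q w_2)^{p-1}$, and multiplying gives $[w]_{A_p} \leq [w_1]_{A_1}[w_2]_{A_1}^{p-1}$. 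This is a routine Hölder/essinf computation and I would not dwell on it.

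For necessity, fix $w \in A_p$. The idea is to build $w_1, w_2 \in A_1$ by a fixed-point-style argument using $\Rh$. Since $w \in A_p$, the maximal operator $M$ is bounded on $L^p(w)$ (Theorem~\ref{thm:Ap-max}) and, dually, since $\sigma = w^{1-p'} \in A_{p'}$, $M$ is bounded on $L^{p'}(\sigma)$. The key trick is to define a single sublinear operator on $L^p(w)$ that simultaneously controls both $w_1$ and $w_2$. Following the Coifman--Jones--Rubio de Francia approach, I would set, for a nonnegative $h$,
\[
S h = \left( M\big( (h w)^{p'/p} \big) w^{-1} \right)^{p/p'} + M h,
\]
or a variant thereof, chosen so that $S$ is sublinear (using $(a+b)^{p/p'}\leq \dots$ when $p \geq 2$, and being a bit more careful when $p<2$ — in practice one uses $\|Sh\|_{L^p(w)} \leq \|Mh\|_{L^p(w)} + \|M((hw)^{p'/p})\|_{L^{p'}(\sigma)}^{p/p'} \lesssim \|h\|_{L^p(w)}$, invoking the two maximal-operator bounds). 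Applying the generalized iteration algorithm from the remark after Theorem~\ref{thm:rdf-algorithm} to $S$ with, say, $h = 1 + $ (a suitable starting function), I obtain $W = \Rh_S h$ with $h \leq W$, $\|W\|_{L^p(w)} \lesssim \|h\|_{L^p(w)}$, and $SW \leq 2\|S\|_{L^p(w)} W$. From $SW \leq CW$ I then read off that $MW \leq CW$, so $W \in A_1$, and that $M((Ww)^{p'/p}) \leq C (Ww)^{p'/p}$, so $(Ww)^{p'/p} \in A_1$, i.e. $Ww \in RH$-type control giving $(Ww)^{p/p} \dots$; more precisely $V := W^{1-p} w \in A_1$. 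Setting $w_1 = W^{??}$ and $w_2$ accordingly, one arranges $w = w_1 w_2^{1-p}$. The bookkeeping of exponents is the part that requires care: one wants $w_1 \in A_1$ and $w_1^{-1} w \in A_1$ after raising to the power $1/(p-1)$, i.e. $w_2 = (w_1^{-1}w)^{1/(1-p)} = (w/w_1)^{p'/p}$, and one must check $w_2 \in A_1$.

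The main obstacle, and the step I expect to spend the most effort on, is choosing the sublinear operator $S$ correctly so that (a) it is genuinely sublinear (or at least satisfies the weak hypotheses needed for the iteration remark), (b) it is bounded on $L^p(w)$ with a norm controlled by $[w]_{A_p}$, and (c) the fixed-point inequality $SW \lesssim W$ simultaneously delivers the two separate $A_1$ conditions needed. The delicate point is the interplay between the $L^p(w)$ norm and the $L^{p'}(\sigma)$ norm under the substitution $g = (hw)^{p'/p}$, which converts one into the other; verifying this correspondence and the sublinearity when $p<2$ (where $t \mapsto t^{p/p'}$ is concave and one cannot naively distribute over sums) is where the argument is least mechanical. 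Once $S$ is in hand, conclusions (1)--(3) of the generalized algorithm plus the exponent arithmetic finish the proof, and I would also remark that this construction yields the quantitative bound $[w_1]_{A_1}, [w_2]_{A_1} \lesssim [w]_{A_p}^{\max(1,p'/p)}$ or similar, though I would not compute the exact exponent.
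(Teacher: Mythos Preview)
Your easy direction is correct and matches the paper. For the hard direction you have the right overall strategy---build a sublinear operator from two maximal-type pieces and iterate---but your specific operator does not work, and the gap is exactly the obstacle you flag. The map $Ah = (M((hw)^{p'/p})w^{-1})^{p/p'}$ is not bounded on $L^p(w)$ unless $p=2$: computing directly,
\[
\|Ah\|_{L^p(w)}^p = \int_\subRn M\big((hw)^{p'/p}\big)^{p(p-1)}\, w^{1-p(p-1)}\,dx,
\]
and $p(p-1)=p'$ (so that the weight becomes $\sigma$ and the $A_{p'}$ maximal bound applies) only at $p=2$; your parenthetical identification of $\|Ah\|_{L^p(w)}$ with $\|M((hw)^{p'/p})\|_{L^{p'}(\sigma)}^{p/p'}$ is therefore wrong for $p\neq 2$. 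The sublinearity concern you raise is also real for this operator, and your ``$w_1=W^{??}$'' shows the exponent arithmetic has not closed up either.

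The paper's move, which resolves all three issues at once, is to abandon $L^p(w)$ as the ambient space and iterate instead on \emph{unweighted} $L^q$ with $q=pp'$. One sets
\[
S_1 f = w^{1/q}\, M\big(f^{p'} w^{-1/p}\big)^{1/p'}, \qquad
S_2 f = \sigma^{1/q}\, M\big(f^{p} \sigma^{-1/p'}\big)^{1/p};
\]
a change of variables gives $\|S_1 f\|_{L^q}^q = \|M(f^{p'}w^{-1/p})\|_{L^p(w)}^p$ and similarly for $S_2$, so both are bounded on $L^q$ by Theorem~\ref{thm:Ap-max}. Because the \emph{inner} exponents $p'$ and $p$ are both greater than $1$, Minkowski's inequality on cube averages makes each $S_i$ genuinely sublinear for every $1<p<\infty$, with no case split at $p=2$. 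Iterating $S=S_1+S_2$ on $L^q$ and reading off $S_i(\Rh h)\lesssim \Rh h$ for $i=1,2$ yields $w_2=(\Rh h)^{p'}w^{-1/p}\in A_1$ and $w_1=(\Rh h)^p\sigma^{-1/p'}\in A_1$, and then $w_1 w_2^{1-p}=w^{1/p}\cdot w^{1/p'}=w$ in one line. The choice $q=pp'$ is precisely the missing idea that makes the boundedness, the sublinearity, and the final exponent bookkeeping all work simultaneously.
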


\begin{proof}
One direction is easy:  in~\cite{MR2797562} we dubbed this fact
``reverse factorization.''\footnote{Unfortunately, this terminology
  has not gained universal acceptance.}
  Fix $p$ and $w_1,\,w_2\in A_1$.  Then for
any cube $Q$ and a.e. $x\in Q$, 
\[ \avgint_Q w_i\,dy \leq [w_i]_{A_1} w_i(x), \qquad i=1,2.  \]
Let $w=w_1w_2^{1-p}$; then we have that
\begin{align*}
&  \avgint_Q w\,dx \left(\avgint_Q w^{1-p'}\,dx\right)^{p-1} \\
& \qquad \qquad  = \avgint_Q w_1w_2^{1-p}\,dx \left(\avgint_Q
  [w_1w_2^{1-p}]^{1-p'}\,dx\right)^{p-1} \\
& \qquad \qquad  \leq [w_1]_{A_1}[w_2]_{A_1}^{p-1}
\avgint_Q w_1\,dx \left(\avgint_Q w_2\,dx\right)^{1-p}
\left(\avgint_Q w_2\,dx \right)^{p-1}\left(\avgint_Q
  w_1\,dx\right)^{-1} \\
& \qquad \qquad  = [w_1]_{A_1}[w_2]_{A_1}^{p-1}. 
\end{align*}

\medskip

The  difficult direction is the converse.  Fix $w\in A_p$,
$1<p<\infty$, and let $q=pp'>1$.   Define the operator 
\[ S_1f (x) =
  w(x)^{\frac{1}{q}}M(f^{p'}w^{-\frac{1}{p}})(x)^{\frac{1}{p'}}. \]
Then $S_1$ is sublinear and $S_1 : L^q \rightarrow L^q$ since
\[ \int_\subRn (S_1 f)^q\,dx =
\int_\subRn M(f^{p'}w^{-\frac{1}{p}})^p w\,dx
\leq C[w]_{A_p}^{p'} \int_\subRn f^q\,dx.  \]
In particular, $\|S_1\|_{L^q} \lesssim [w]_{A_p}^{\frac{1}{p}}$. 
Similarly, let $\sigma=w^{1-p'} \in A_{p'}$ and define
\[ S_2f = \sigma^{\frac{1}{q}}
  M(f^p\sigma^{-\frac{1}{p'}})^{\frac{1}{p}}. \]
Then $S_2$ is sublinear, $S_2 : L^q \rightarrow L^q$, and
$\|S_1\|_{L^q} \lesssim
[\sigma]_{A_{p'}}^{\frac{1}{p'}}=[w]_{A_p}^{\frac{1}{p}}$

Define $S=S_1+S_2$  and form the Rubio de Francia iteration algorithm
\[ \Rh h(x) = \sum_{k=0}^\infty \frac{S^kh(x)}{2^k\|S\|_{L^q}^k}. \]
Then, by the proof of
Theorem~\ref{thm:rdf-algorithm}, $\Rh : L^q \rightarrow L^q$.  Fix any
non-zero function $h\in L^q$; then 
$\Rh h$ is finite almost everywhere.  Moreover, 
$S(\Rh h)(x) \leq 2\|S\|_{L^q} \Rh h(x)$.  In particular, we have that
\[ w^{\frac{1}{q}}M((\Rh h)^{p'} w^{-\frac{1}{p}})^{\frac{1}{p'}}
= S_1(\Rh h) \lesssim \Rh h. \]
Hence, if we let $w_2= (\Rh h)^{p'} w^{-\frac{1}{p}}$, then 
this inequality becomes $Mw_2 \lesssim w_2$, so $w_2\in A_1$.  
Similarly, if we repeat this argument with $S_2$ in place of $S_1$,
we get $w_1= (\Rh h)^p \sigma^{-\frac{1}{p'}} \in A_1$.   Moreover,
it is immediate that $w_1w_2^{1-p} =
w^{\frac{1}{p}}w^{\frac{1}{p'}}=w$. 
\end{proof}

We note that in the proof of factorization, the function $h$ is chosen
essentially arbitrarily.  It is an open question whether the choice of
$h$ can be used to optimise this factorization in some way.

\medskip

The factorization in Theorem~\ref{thm:jones} also encodes information about the
reverse H\"older class of the weight $w$.  The proof is fairly easy
and mostly requires reinterpreting the terms in the Jones
factorization theorem.  This
generalization was first proved in~\cite{cruz-uribe-neugebauer95}.  To
state it, we need to introduce the class $RH_\infty$, which is related
to the reverse H\"older classes $RH_s$ in a way that is analogous to
the relationship between the $A_1$ and $A_p$ classes.

\begin{definition} 
Given a weight $w$, we say $w\in RH_\infty$ if
\[ [w]_{RH_\infty} = \sup_Q \esssup_{x\in Q} w(x) 
\left(\avgint_Q w(y)\,dy\right)^{-1} < \infty, \]
where the supremum is taken over all cubes $Q$.
\end{definition}

From the definition we have that for every cube $Q$ and a.e. $x\in Q$,
\[ w(x) \leq [w]_{RH_\infty} \avgint_Q w\,dy. \]
Raising both sides to the power $s>1$ and integrating over $Q$ shows
that $RH_\infty \subset RH_s$.  

\begin{theorem} \label{thm:gen-factorization}
For $1<p,\,s<\infty$, given a weight $w$, $w\in A_p \cap RH_s$ if and
only if there exist weights $v_1,\,v_2$ such that $w=v_1v_2$,  $v_1\in A_1\cap
RH_s$ and $v_2\in A_p\cap RH_\infty$.
\end{theorem}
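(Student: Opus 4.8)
The plan is to reduce Theorem~\ref{thm:gen-factorization} to the ordinary Jones factorization theorem (Theorem~\ref{thm:jones}) by passing through the exponent substitution of Proposition~\ref{prop:Ap-RHs}. The key observation is that $w\in A_p\cap RH_s$ is equivalent to $w^s\in A_q$ with $q=s(p-1)+1$, and that the target factors $v_1\in A_1\cap RH_s$ and $v_2\in A_p\cap RH_\infty$ can also be detected after raising to the power $s$: applying Proposition~\ref{prop:Ap-RHs} with $p=1$ shows $v_1\in A_1\cap RH_s$ iff $v_1^s\in A_s$, and one checks directly that $v_2\in RH_\infty$ forces $v_2^s$ to be comparable to $v_2$ in a way that keeps it in $A_p$ (more precisely, $v_2\in A_p\cap RH_\infty$ should correspond to $v_2^s\in A_{q}\cap RH_\infty$ after the substitution, since $RH_\infty$ is preserved up to constants under taking powers). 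So the first step is to verify these three equivalences carefully, using only Proposition~\ref{prop:Ap-RHs} and the definitions of $A_1$, $RH_\infty$, and the arithmetic $q-1=s(p-1)$.

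Granting the dictionary, the second step is the forward direction. Given $w\in A_p\cap RH_s$, set $u=w^s\in A_q$. Apply Theorem~\ref{thm:jones} (in the form $u=u_1u_2^{1-q}$ with $u_1,u_2\in A_1$), and then translate back: define $v_1=u_1^{1/s}$ and $v_2=(u_2^{1-q})^{1/s}=u_2^{(1-q)/s}=u_2^{-(p-1)}$, using $q-1=s(p-1)$. Then $v_1v_2=u^{1/s}=w$. Since $u_1\in A_1$, $v_1^s=u_1\in A_1\subset A_s$, so by the $p=1$ case of Proposition~\ref{prop:Ap-RHs}, $v_1\in A_1\cap RH_s$. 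For $v_2$: $u_2\in A_1$ means $u_2\in RH_\infty$? No---$A_1$ weights need not be in $RH_\infty$. So this naive route produces a $v_2$ that need not lie in $RH_\infty$. The fix is to use the \emph{generalized} factorization available in the proof of Theorem~\ref{thm:jones}: one can arrange for the second factor $u_2$ to come with a reverse-H\"older bound by being a bit more careful about which operator is iterated, or---cleaner---to appeal to the reverse factorization (easy) direction first to understand the structure, and invoke that the Rubio de Francia construction producing $u_2$ can be run so that $u_2\in A_1\cap RH_\infty$ whenever $u\in A_\infty$ with an $RH$ bound. Since $w\in RH_s$ gives $u=w^s\in A_\infty$ but more is true, the relevant iteration algorithm (built from $M$ together with the operator witnessing $RH$) yields the $RH_\infty$ factor.

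The honest way to organize this is therefore to prove the easy (reverse) direction first---if $w=v_1v_2$ with $v_1\in A_1\cap RH_s$ and $v_2\in A_p\cap RH_\infty$, then $w\in A_p$ is immediate from the product structure (as in the proof of Theorem~\ref{thm:jones}), and $w\in RH_s$ follows by H\"older: $\avgint_Q w^s\,dx=\avgint_Q v_1^s v_2^s\,dx\lesssim [v_2]_{RH_\infty}^s\bigl(\avgint_Q v_2\bigr)^s\avgint_Q v_1^s\,dx\lesssim[v_2]_{RH_\infty}^s[v_1]_{RH_s}^s\bigl(\avgint_Q v_1\avgint_Q v_2\bigr)^s$, and then one uses that $v_1\in RH_s\subset A_\infty$ together with Proposition~\ref{prop:multilinear-rhi}-style estimates (or directly the fact that for $A_\infty$ weights $\avgint_Q v_1\,\avgint_Q v_2\lesssim\avgint_Q v_1v_2$ when one factor is in $RH_\infty$) to get $\lesssim\bigl(\avgint_Q v_1v_2\bigr)^s=\bigl(\avgint_Q w\bigr)^s$. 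For the forward direction I would then set $u=w^s$, write $u^{1/s}=w$, and run the Rubio de Francia iteration from Section~\ref{section:factorization} applied to the pair of sublinear operators adapted to $u\in A_q$ \emph{and} to the reverse H\"older structure, extracting $v_2$ as the factor satisfying both $Mv_2\lesssim v_2$ and $v_2^{1/?}\in RH_\infty$; the bookkeeping of exponents via $q-1=s(p-1)$ and $p'-1=s(q'-1)$ (both recorded in the proof of Proposition~\ref{prop:Ap-RHs}) is routine.

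I expect the main obstacle to be producing the $RH_\infty$ factor $v_2$: ordinary Jones factorization gives two $A_1$ factors, but an $A_1$ weight need not be in $RH_\infty$, so one cannot simply quote Theorem~\ref{thm:jones} as a black box. The resolution is to re-run the iteration-algorithm proof of Theorem~\ref{thm:jones} with an operator that additionally controls $\esssup_Q v_2/\avgint_Q v_2$---equivalently, to factor $u=w^s$ using the \emph{generalized} Rubio de Francia scheme in which one of the two constructed weights automatically inherits an $RH_\infty$ bound because the maximal-type operator used in its construction is the one governing reverse H\"older inequalities rather than $A_1$. Once that adapted construction is in place, translating back through the power $1/s$ and the identities among $p,q,s$ finishes the proof.
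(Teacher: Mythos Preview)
Your setup is exactly right: pass to $u=w^s\in A_q$ with $q=s(p-1)+1$, apply Jones factorization to write $u=u_1u_2^{1-q}$ with $u_1,u_2\in A_1$, and set $v_1=u_1^{1/s}$, $v_2=u_2^{(1-q)/s}=u_2^{1-p}$. You then correctly observe that $v_1^s=u_1\in A_1$ gives $v_1\in A_1\cap RH_s$ (this is Lemma~\ref{lemma:A1-RHs}; note that Proposition~\ref{prop:Ap-RHs} with $p=1$ would give $q=1$, not $q=s$, so the target is $v_1^s\in A_1$, not $A_s$).

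The gap is in your treatment of $v_2$. You write that $u_2\in A_1$ need not imply $u_2\in RH_\infty$---true, but irrelevant: $v_2$ is not $u_2$, it is the \emph{negative} power $u_2^{-(p-1)}$. And negative powers of $A_1$ weights are automatically in $RH_\infty$: if $u_2\in A_1$ then for any cube $Q$ and a.e.\ $x\in Q$,
\[
u_2(x)^{-(p-1)} \leq \bigl(\essinf_Q u_2\bigr)^{-(p-1)} \leq [u_2]_{A_1}^{\,p-1}\Bigl(\avgint_Q u_2\,dy\Bigr)^{-(p-1)} \leq [u_2]_{A_1}^{\,p-1}\avgint_Q u_2^{-(p-1)}\,dy,
\]
the last step by H\"older (or Jensen) applied to $t\mapsto t^{-(p-1)}$. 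This is precisely Lemma~\ref{lemma:A1-neg-power}. Together with reverse factorization ($v_2=1\cdot u_2^{1-p}$ with $1,u_2\in A_1$) you get $v_2\in A_p\cap RH_\infty$ directly. So the ``naive route'' works as stated, and the entire detour---re-running the iteration algorithm with an adapted operator to force an $RH_\infty$ factor---is unnecessary.

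For the reverse direction your outline is in the right spirit but the bookkeeping is loose. The paper's argument is cleaner: from $v_1^s\in A_1$ and $v_2^s\in RH_\infty$ (the latter by Lemma~\ref{lemma:RHinfty:pos-power}) one pulls $v_2^s$ out of $\avgint_Q v_1^s v_2^s$ via the $RH_\infty$ bound, then uses $v_2\in RH_\infty\subset RH_s$ and $v_1\in A_1$ to collapse the product of averages to $\bigl(\avgint_Q v_1v_2\bigr)^s$. No appeal to Proposition~\ref{prop:multilinear-rhi} is needed.
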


 For the proof of Theorem~\ref{thm:gen-factorization}
we need three lemmas.  The first extends
Proposition~\ref{prop:Ap-RHs} to $A_1$ weights.

\begin{lemma} \label{lemma:A1-RHs}
Given a weight $w$ and $s>1$, $w\in A_1\cap RH_s$ if and only if
$w^s\in A_1$. 
\end{lemma}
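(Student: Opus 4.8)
The plan is to prove both directions by exploiting the duality between $\essinf$-type inequalities and pointwise bounds, together with Proposition~\ref{prop:Ap-RHs} as a guide for the $A_p$ case. First I would prove the forward direction: assume $w\in A_1\cap RH_s$. For any cube $Q$, the $RH_s$ condition gives $\left(\avgint_Q w^s\,dx\right)^{1/s}\leq [w]_{RH_s}\avgint_Q w\,dx$, and the $A_1$ condition gives $\avgint_Q w\,dx \leq [w]_{A_1}\essinf_{x\in Q} w(x)$. Chaining these and then raising to the power $s$ yields $\avgint_Q w^s\,dx \leq [w]_{RH_s}^s[w]_{A_1}^s \left(\essinf_{x\in Q} w(x)\right)^s = [w]_{RH_s}^s[w]_{A_1}^s \essinf_{x\in Q} w(x)^s$, which is exactly the $A_1$ condition for $w^s$ with $[w^s]_{A_1}\leq [w]_{RH_s}^s[w]_{A_1}^s$.

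For the converse, assume $w^s\in A_1$. The key observation is that $A_1\subset RH_\infty$ is false in general, but we do have the elementary chain: for any cube $Q$, $\avgint_Q w\,dx \leq \left(\avgint_Q w^s\,dx\right)^{1/s}$ by Jensen's (or H\"older's) inequality, and also $\left(\avgint_Q w^s\,dx\right)^{1/s}\leq [w^s]_{A_1}^{1/s}\left(\essinf_{x\in Q} w^s(x)\right)^{1/s}=[w^s]_{A_1}^{1/s}\essinf_{x\in Q} w(x)$. Composing these two inequalities shows $\avgint_Q w\,dx \leq [w^s]_{A_1}^{1/s}\essinf_{x\in Q} w(x)$, so $w\in A_1$ with $[w]_{A_1}\leq [w^s]_{A_1}^{1/s}$. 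Finally, the $RH_s$ bound follows by reading the same two inequalities in a slightly different order: $\left(\avgint_Q w^s\,dx\right)^{1/s}\leq [w^s]_{A_1}^{1/s}\essinf_{x\in Q} w(x) \leq [w^s]_{A_1}^{1/s}\avgint_Q w\,dx$, which is precisely $w\in RH_s$ with $[w]_{RH_s}\leq [w^s]_{A_1}^{1/s}$.

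I do not expect a serious obstacle here; the lemma is essentially a bookkeeping exercise with Jensen's inequality and the definitions of $A_1$ and $RH_s$. The one point requiring a little care is making sure the $\essinf$ manipulations are legitimate: raising an a.e. inequality to a positive power and passing $\essinf$ through the map $t\mapsto t^s$ (which is valid since this map is increasing on $[0,\infty)$, so $\essinf_{x\in Q} w(x)^s = (\essinf_{x\in Q} w(x))^s$). Compared with Proposition~\ref{prop:Ap-RHs}, whose proof needed the relation $q=s(p-1)+1$ to match up exponents of $w^{1-p'}$, the $A_1$ case is simpler precisely because there is no dual weight $w^{1-p'}$ to track — everything reduces to comparing averages of $w$ and $w^s$ against a single pointwise infimum.
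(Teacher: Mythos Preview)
Your proposal is correct and follows essentially the same route as the paper: chain the $RH_s$ and $A_1$ inequalities for the forward direction, and for the converse use H\"older's inequality $\avgint_Q w\,dx \le (\avgint_Q w^s\,dx)^{1/s}$ together with the $A_1$ bound on $w^s$ to recover both $w\in A_1$ and $w\in RH_s$. The paper writes the converse as the single chain $\avgint_Q w\,dy \le (\avgint_Q w^s\,dy)^{1/s}\lesssim \essinf_{x\in Q} w(x) \le \avgint_Q w\,dy$, which encodes exactly the two inequalities you wrote out separately.
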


\begin{proof}
Suppose first that $w\in A_1\cap RH_s$.  Given any cube $Q$,
\[ \avgint_Q w^s\,dy \lesssim \left(\avgint_Q w\,dy\right)^s
\lesssim \essinf_{x\in  Q} w(x)^s. \]
Hence, $w^s\in A_1$.  

Conversely, suppose $w^s\in A_1$. Given any cube $Q$, by H\"older's
inequality,
\[ \avgint_Q w\,dy \leq \left(\avgint_Q w^s\,dy\right)^{\frac{1}{s}}
\lesssim \essinf_{x\in Q} w(x) 
\leq \avgint_Q w\,dy. \]
It follows at once that $w\in A_1\cap RH_s$. 
\end{proof}

The next two lemmas consider dilations of $A_1$ and $RH_\infty$
weights.

\begin{lemma} \label{lemma:A1-neg-power}
If $w\in A_1$, then for any $r>0$, $w^{-r} \in RH_\infty$. 
\end{lemma}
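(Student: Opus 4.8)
The plan is to unwind both definitions and combine the $A_1$ lower bound on the infimum of $w$ with Jensen's inequality applied to the convex function $t \mapsto t^{-r}$. Fix a cube $Q$. Since $w \in A_1$, we have $\avgint_Q w\,dy \leq [w]_{A_1}\essinf_{x\in Q}w(x)$, which rearranges to $\essinf_{x\in Q}w(x) \geq [w]_{A_1}^{-1}\avgint_Q w\,dy$. Raising this to the power $-r < 0$ reverses the inequality and gives
\[
\esssup_{x\in Q}w(x)^{-r} = \Big(\essinf_{x\in Q}w(x)\Big)^{-r} \leq [w]_{A_1}^{r}\Big(\avgint_Q w\,dy\Big)^{-r}.
\]

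Next I would apply Jensen's inequality: because $t\mapsto t^{-r}$ is convex on $(0,\infty)$ and $\avgint_Q\cdot\,dy$ is a probability average,
\[
\Big(\avgint_Q w\,dy\Big)^{-r} \leq \avgint_Q w^{-r}\,dy.
\]
Chaining the two displays yields $\esssup_{x\in Q}w(x)^{-r} \leq [w]_{A_1}^{r}\avgint_Q w^{-r}\,dy$ for every cube $Q$, which is exactly the statement that $w^{-r}\in RH_\infty$ with $[w^{-r}]_{RH_\infty}\leq [w]_{A_1}^{r}$.

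There is essentially no serious obstacle here; the only point that requires a moment of care is checking that the exponent $-r$ is negative so that raising the $A_1$ inequality to that power flips the direction correctly, and that Jensen's inequality is being used in the direction that is valid for a convex function (lower bound on the average of the composed function). One should also note in passing that $w^{-r}$ is locally integrable precisely because $w\in A_1\subset A_\infty$ implies $w^{-r}$ lies in an appropriate $RH$ or $A_q$ class, so the averages $\avgint_Q w^{-r}\,dy$ are finite and the statement is not vacuous; this follows from the self-improving properties of Muckenhoupt weights already established above, but is not even needed for the inequality itself since it holds trivially (both sides infinite) otherwise.
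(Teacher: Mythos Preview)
Your proof is correct and follows essentially the same two-step structure as the paper: first use the $A_1$ condition to bound $w(x)^{-r}$ pointwise by $[w]_{A_1}^r\big(\avgint_Q w\big)^{-r}$, then show $\big(\avgint_Q w\big)^{-r}\le \avgint_Q w^{-r}$. The only cosmetic difference is that the paper obtains the second inequality via H\"older's inequality with exponent $1+r$ (writing $1=\avgint_Q w^{1/p'}w^{-1/p'}$), whereas you invoke Jensen's inequality for the convex function $t\mapsto t^{-r}$; these are equivalent justifications of the same convexity bound.
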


\begin{proof}
Fix a cube $Q$.  By H\"older's inequality with exponent $p=1+r$,
\[ 1 = \avgint_Q w^{\frac{1}{p'}}w^{-\frac{1}{p'}}\,dx
\leq \left(\avgint_Q w\,dy\right)^{\frac{r}{1+r}}
\left(\avgint_Q w^{-r}\,dx\right)^{\frac{1}{1+r}}. \]
If we combine this with the fact that  $w\in A_1$, we get that for a.e. $x\in Q$,
\[ w(x)^{-r} \lesssim \left(\avgint_Q w\,dy\right)^{-r}
\leq \avgint_Q w^{-r}\,dy. \]
Hence, $w^{-r} \in RH_\infty$. 
\end{proof}

\begin{lemma} \label{lemma:RHinfty:pos-power}
If $w\in RH_\infty$, then for any $r>0$, $w^r \in RH_\infty$.  
\end{lemma}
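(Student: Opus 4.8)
The plan is to work directly from the definition of $RH_\infty$ and exploit the fact that raising an inequality of the form $a \leq Cb$ to a positive power preserves it (up to adjusting the constant), combined with Jensen's inequality to handle the average on the right-hand side. First I would fix a cube $Q$. Since $w \in RH_\infty$, we know that for a.e. $x \in Q$,
\[
w(x) \leq [w]_{RH_\infty} \avgint_Q w\,dy.
\]
Raising both sides to the power $r > 0$ gives, for a.e. $x\in Q$,
\[
w(x)^r \leq [w]_{RH_\infty}^r \left(\avgint_Q w\,dy\right)^r.
\]
The remaining task is to bound $\left(\avgint_Q w\,dy\right)^r$ by a constant times $\avgint_Q w^r\,dy$, so that the right-hand side becomes an average of $w^r$.

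For this I would split into two cases according to whether $r \geq 1$ or $0 < r < 1$. When $r \geq 1$, the function $t \mapsto t^r$ is convex, so by Jensen's inequality $\left(\avgint_Q w\,dy\right)^r \leq \avgint_Q w^r\,dy$, and we are done with constant $[w^r]_{RH_\infty} \leq [w]_{RH_\infty}^r$. When $0 < r < 1$, Jensen's inequality goes the wrong way, but here we can instead use the pointwise bound again: since $w(y) \leq [w]_{RH_\infty}\avgint_Q w\,dx$ for a.e. $y \in Q$, we have $w(y)^{r-1} \geq [w]_{RH_\infty}^{r-1}\left(\avgint_Q w\,dx\right)^{r-1}$ (the exponent $r-1$ is negative, so the inequality reverses), hence
\[
\avgint_Q w^r\,dy = \avgint_Q w \cdot w^{r-1}\,dy
\geq [w]_{RH_\infty}^{r-1}\left(\avgint_Q w\,dx\right)^{r-1}\avgint_Q w\,dy
= [w]_{RH_\infty}^{r-1}\left(\avgint_Q w\,dx\right)^{r}.
\]
Combining this with the displayed bound on $w(x)^r$ yields $w(x)^r \leq [w]_{RH_\infty}\avgint_Q w^r\,dy$ for a.e. $x\in Q$, so $w^r \in RH_\infty$ with $[w^r]_{RH_\infty} \leq \max([w]_{RH_\infty}^r, [w]_{RH_\infty})$.

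I do not expect any serious obstacle here; the only mild subtlety is remembering that the Jensen-type estimate changes direction at $r = 1$ and arranging the two cases so that in each one the $RH_\infty$ pointwise bound is used in the direction that helps. An alternative, perhaps cleaner, route would be to observe that $RH_\infty$ weights satisfy $\avgint_Q w^r\,dy \sim (\avgint_Q w\,dy)^r$ with constants depending only on $r$ and $[w]_{RH_\infty}$ (this is essentially the statement that $RH_\infty$ is closed under such dilations in both directions), and then the lemma is immediate; but the case split above is entirely elementary and self-contained.
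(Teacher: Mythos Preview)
Your proof is correct. For $r\geq 1$ your argument coincides with the paper's: raise the $RH_\infty$ pointwise bound to the power $r$ and use convexity (the paper phrases this as H\"older's inequality) to pass from $\big(\avgint_Q w\big)^r$ to $\avgint_Q w^r$.

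For $0<r<1$ you take a genuinely different and more elementary route. The paper argues that $RH_\infty\subset A_\infty$ (which invokes the converse direction of Theorem~\ref{thm:rh-ineq}, not proved in these notes), then applies Proposition~\ref{prop:Ap-RHs} to conclude $w^r\in RH_{1/r}$, and finally uses that reverse H\"older inequality to bound $\big(\avgint_Q w\big)^r\lesssim \avgint_Q w^r$. You instead reuse the $RH_\infty$ pointwise bound directly: raising $w(y)\leq [w]_{RH_\infty}\avgint_Q w$ to the negative power $r-1$ gives $w(y)^r\geq [w]_{RH_\infty}^{r-1}\big(\avgint_Q w\big)^{r-1} w(y)$, and integrating yields the same comparison of averages. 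This is self-contained, avoids the $A_\infty$ machinery, and produces the explicit constant $[w^r]_{RH_\infty}\leq [w]_{RH_\infty}$ in the range $0<r<1$. The paper's approach is shorter to state once the heavier tools are in place, but yours has the virtue of not depending on results the notes only cite.
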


\begin{proof}
If $r>1$, this is follows from H\"older's inequality: for any
cube $Q$ and a.e. $x\in Q$,
\[ w(x)^r \lesssim \left(\avgint_Q w\,dy\right)^r
\leq \avgint_Q w^r\,dy. \]
If $r<1$, then, since $w\in A_\infty$, by
Proposition~\ref{prop:Ap-RHs}, $w^r \in RH_{1/r}$.  Hence, we can
repeat the above argument using the reverse H\"older inequality to
get that $w^r \in RH_\infty$. 
\end{proof}

Note that the analog of Lemma~\ref{lemma:A1-neg-power} is not true for
$RH_\infty$ weights.  Since $|x|^{-a} \in A_1$ for $0\leq a <n$, by
Lemma~\ref{lemma:A1-neg-power}, $w(x)=|x|^b \in RH_\infty$ for any $b>0$.  But if $b>n$,
then $w^{-1} \not\in A_1$ since it is not locally integrable. 

We also note in passing that  the fact that $A_\infty$ is closed under the
dilation $w^r$, $0<r<1$, seems to be particular to this
class. For instance, there exists a doubling weight (i.e. $w$ such
that $w(2Q)\leq w(Q)$ for all cubes $Q$) such that $w^r$ is not
doubling for any $0<r<1$.  See~\cite{CruzUribe:2001tr}.

\begin{proof}[Proof of Theorem~\ref{thm:gen-factorization}]
We first fix $v_1\in A_1\cap RH_s$ and $v_2\in A_p \cap RH_\infty$.
By Lemmas~\ref{lemma:A1-RHs} and~\ref{lemma:RHinfty:pos-power},
$v_1^s\in A_1$ and $v_2^s\in RH_\infty$. Then given any cube $Q$,
\[ \avgint_Q w^s\,dx \lesssim
\avgint_Q v_1^s\,dx \avgint_Q v_2^s\,dx
\lesssim \avgint_Q v_1^s\,dx \left(\avgint_Q v_2\,dx\right)^s
\lesssim \left(\avgint_Q v_1v_2\,dx\right)^s. \]
Thus, $w\in RH_s$.  Similarly, by Lemma~\ref{lemma:A1-neg-power},
$v_1^{1-p'}\in RH_\infty$ and $v_1,\,v_2\in A_p$, and so 
\begin{multline*}
 \avgint_Q v_1v_2\,dx \left(\avgint_Q
    [v_1v_2]^{1-p'}\,dx\right)^{p-1} \\
\lesssim \avgint_Q v_1\,dx \avgint_Q v_2\,dx
\left(\avgint_Q v_1^{1-p'}\,dx\right)^{p-1}
\left(\avgint_Q v_2^{1-p'}\,dx\right)^{p-1} \leq
[v_1]_{A_p}[v_2]_{A_p}. 
\end{multline*}
Thus $w\in A_p$. 

\medskip

To prove the converse, fix $w\in A_p\cap RH_s$.  Then by
Proposition~\ref{prop:Ap-RHs}, $w^s\in A_q$  with $q=s(p-1)+1$.  But
then by Theorem~\ref{thm:jones} there exist $w_1,\,w_2\in
A_1$ such that $w^s=w_1w_2^{1-q}$, or equivalently,
$w=w_1^{\frac{1}{s}}w_2^{1-p} = v_1v_2$.   By
Lemma~\ref{lemma:A1-RHs}, $v_1\in A_1\cap RH_s$, and again by
Theorem~\ref{thm:jones}  and Lemma~\ref{lemma:A1-neg-power},
$v_2\in A_p\cap RH_\infty$.
\end{proof}

\medskip

Finally, we note that the iteration algorithm and the Jones
factorization theorem can be extended to other settings.  For the
factorization of the one-sided weights $A_p^{\pm}$,
see~\cite{MR2797562,martin-reyes-ortegasalvador-delatorre90}.  For the
extension of factorization to pairs of positive operators and to the
two weight setting,
see~\cite{MR2797562}.  For reverse factorization for the 
variable $A_\pp$ weights (the analog of the Muckenhoupt weights in the
variable Lebesgue spaces~\cite{MR2927495}) see~\cite{dcu-wangP}.

\section{Rubio de Francia extrapolation}
\label{section:extrapolation}

In this section we state and prove the Rubio de
Francia extrapolation theorem.  Our approach to extrapolation is based
on the abstract formalism of families of extrapolation pairs.  This
approach was introduced (in passing)
in~\cite{cruz-uribe-perez00} and first fully developed
in~\cite{cruz-uribe-martell-perez04}.  (See also~\cite{MR2797562}.)
It was implicit from the beginning that in extrapolating from an
inequality of the form
\[ \int_\subRn |Tf|^{p}w\,dx \lesssim \int_\subRn |f|^pw\,dx \]
the operator $T$ and its properties (positive, linear, etc.) played no
role in the proof.  Instead, all that mattered was that there existed
a pair of non-negative functions $(|Tf|,|f|)$ that satisfied a given
collection of norm inequalities.   Therefore, the proof goes through
working with any pair $(f,g)$ of non-negative functions.   

As a consequence, other kinds of inequalities can be  proved using
extrapolation.  For example, if we take pairs of the form $(|Tf|,Mf)$,
where, for example, $T$ is a Calder\'on-Zygmund singular integral operator,
then we can prove Coifman-Fefferman type inequalities~\cite{coifman-fefferman74}:
\[ \int_\subRn |Tf|^{p}w\,dx \lesssim \int_\subRn (Mf)^pw\,dx. \]
This was one of the reasons that this approach was adopted
in~\cite{cruz-uribe-martell-perez04}.   We discuss this and other
examples in detail below.

Hereafter, we will adopt the following conventions.   A family of extrapolation pairs 
$\F$ will consist of pairs of non-negative, measurable functions
$(f,g)$ that are not equal to $0$ a.e.  When we
write an inequality of the form
\[ \int_\subRn f^{p}w\,dx \leq C\int_\subRn g^pw\,dx, \qquad
  (f,g)\in \F, \]
where $0<p<\infty$ and $w\in A_q$, $1\leq q\leq \infty$, we mean
that this inequality holds for all pairs $(f,g)\in \F$ such that
$\|f\|_{L^p(w)}<\infty$--i.e., that the left-hand side of the
inequality is finite.  We further assume that the constant $C$ can
depend on $\F$, $p$, $q$, $n$, and the $[w]_{A_q}$ constant of $w$,
but that it does not depend on the specific weight $w$.  Note the
assumption that $f,\,g$ are not identically 0 simply rules out trivial
norm inequalities: since $A_\infty$ weights are positive a.e., we have
that $\|f\|_{L^p(w)},\,\|g\|_{L^p(w)}>0$.  Otherwise, if $f=0$, then these
inequalities hold for any $g$, and if $g=0$, they only hold if $f=0$.

If this seems mysterious, it may help to think of the particular
family
\[  \F = \{ (|Tf|,|f|), f\in \mathcal{X} \},  \]
where $T$ is some operator we are interested in and $\mathcal{X}$ is
some ``nice'' family of functions:  $L_c^\infty$, $C_c^\infty$, etc.
We will return to this point in Section~\ref{section:examples} below
when we consider applications of extrapolation.

\begin{theorem} \label{thm:rdf-extrapolation}
Given a family of extrapolation pairs $\F$, suppose that for some
$p_0$, $1\leq p_0<\infty$, and every $w_0\in A_{p_0}$,
\begin{equation} \label{eqn:rdf-extrapol1}
 \int_\subRn f^{p_0}w_0\,dx \leq C\int_\subRn g^{p_0}w_0\,dx, \qquad
  (f,g)\in \F. 
\end{equation}
Then for every $p$, $1<p<\infty$, and every $w\in A_p$, 
\begin{equation} \label{eqn:rdf-extrapol2}
 \int_\subRn f^{p}w\,dx \leq C\int_\subRn g^pw\,dx, \qquad
  (f,g)\in \F. 
\end{equation}
\end{theorem}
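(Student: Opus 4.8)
The plan is to use the Rubio de Francia iteration algorithm (Theorem~\ref{thm:rdf-algorithm}) to manufacture the right $A_{p_0}$ weight from an arbitrary $w \in A_p$, splitting into the two cases $p > p_0$ and $p < p_0$. In both cases the strategy is the same: we fix $(f,g) \in \F$ with $\|f\|_{L^p(w)} < \infty$ (so also $\|g\|_{L^p(w)} < \infty$, since the hypothesis applied with, say, a power weight forces $g$ to be locally controlled — or more simply, we observe that if $\|g\|_{L^p(w)} = \infty$ there is nothing to prove), use duality of the $L^p(w)$ norm to introduce a test function, run the iteration algorithm to upgrade the test function to an $A_1$ weight with controlled $A_1$-constant and controlled $L^{p'}(w)$-norm (or $L^{(p/p_0)'}$-norm), combine that $A_1$ weight with $w$ to produce a weight in $A_{p_0}$, apply hypothesis~\eqref{eqn:rdf-extrapol1}, and then undo the duality.

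For the case $p > p_0$: write $\frac{1}{p} \int f^p w \, dx$-type estimate via duality as $\|f^{p_0}\|_{L^{p/p_0}(w)}$ tested against $h \in L^{(p/p_0)'}(w)$ with $h \geq 0$, $\|h\|_{L^{(p/p_0)'}(w)} \leq 1$. Run the iteration algorithm $\Rh$ built from the maximal operator on $L^{(p/p_0)'}(w)$ — which is bounded there since $w \in A_p \subset A_{(p/p_0)'}$... actually the cleaner route is to use the operator $Sh = \big(M(h^{\,\cdot} w)\big)/w$-type construction so that $\Rh h \cdot w \in A_1$; one checks $\|S\|$ is finite using Theorem~\ref{thm:Ap-max}. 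Then $W := \Rh h \cdot w$ satisfies $W \in A_1$ and hence $w_0 := W^{1 - p_0/p'}\cdots$ — more precisely one forms $w_0 = (\Rh h)^{1-p_0/p'} w$ or an analogous expression and verifies $w_0 \in A_{p_0}$ using that $A_1 \subset A_{p_0}$ together with an explicit computation of the $A_{p_0}$ characteristic in terms of $[\Rh h]_{A_1}$ and $[w]_{A_p}$; the point is that this characteristic is bounded by a constant depending only on $[w]_{A_p}$, $p$, $p_0$, $n$. Apply~\eqref{eqn:rdf-extrapol1} with this $w_0$, use $h \leq \Rh h$ (property (1) of the algorithm) on the left and $\|\Rh h\|_{L^{(p/p_0)'}(w)} \leq 2$ (property (2)) together with H\"older on the right, and take the supremum over $h$.

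For the case $p < p_0$ (which includes $p_0 = 1$ in spirit, though here $p_0 \geq 1$ and $p > 1$), the roles of $w$ and its dual weight are swapped: one works with $\sigma = w^{1-p'}$, builds the iteration algorithm adapted to $L^{r}$ with the appropriate $r$ related to $(p_0/p)'$, and produces $w_0 \in A_{p_0}$ of the form (dual $A_1$ weight) times (power of $w$); the verification that $w_0 \in A_{p_0}$ again reduces to a H\"older-inequality computation. The algebra here is the familiar but slightly fiddly "extrapolation exponent bookkeeping."

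The main obstacle — and the step deserving the most care — is the verification that the weight $w_0$ constructed from $\Rh h$ genuinely lies in $A_{p_0}$ with characteristic bounded uniformly in $h$ and $w$ (depending only on $[w]_{A_p}$, $p$, $p_0$, $n$). This is where one must choose the exponents in the construction correctly so that the product of the $A_1$-type factor and the power of $w$ telescopes, via a single application of H\"older's inequality inside the $A_{p_0}$ supremum, into $[\Rh h]_{A_1}^{\alpha} [w]_{A_p}^{\beta}$ for suitable $\alpha, \beta$. Everything else — Minkowski, duality, property (1)–(3) of Theorem~\ref{thm:rdf-algorithm}, boundedness of $M$ from Theorem~\ref{thm:Ap-max} — is routine. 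One should also remember to dispose of the degenerate cases at the outset (if $\|g\|_{L^p(w)} = \infty$ there is nothing to prove; the hypothesis that $f,g$ are not a.e.\ zero guarantees the relevant norms are strictly positive so no division-by-zero arises), and to note that a preliminary reduction allowing one to assume $h$ bounded with compact support — or $\|f\|_{L^p(w)} < \infty$ — is what makes $\Rh h$ finite a.e.
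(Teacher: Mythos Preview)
Your strategy is correct, but it is the classical two-case route, which is \emph{not} the one the paper takes. The paper deliberately avoids the $p>p_0$ / $p<p_0$ split: it fixes $w\in A_p$, sets $\sigma=w^{1-p'}\in A_{p'}$, and runs \emph{two} iteration algorithms at once, $\Rh_1$ on $L^p(w)$ and $\Rh_2$ on $L^{p'}(\sigma)$. After dualising $\|f\|_{L^p(w)}$ to $L^1$ against some $h_2\in L^{p'}(w)$, it inserts a factor $(\Rh_1 h_1)^{\pm 1/p_0'}$ (with $h_1=f/\|f\|_{L^p(w)}+g/\|g\|_{L^p(w)}$), applies H\"older with exponents $p_0,p_0'$, and the $A_{p_0}$ weight appears as $w_0=(\Rh_1 h_1)^{1-p_0}\,\Rh_2(h_2 w)$. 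The verification that $w_0\in A_{p_0}$ is then a one-line appeal to reverse factorization (the easy direction of Theorem~\ref{thm:jones}), not the ad hoc H\"older computation you describe.

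What each buys: your two-case argument, once the exponent bookkeeping is done carefully (your formula ``$w_0=(\Rh h)^{1-p_0/p'}w$'' is not quite right---in the $p>p_0$ case one wants $Hw\in A_1\subset A_{p_0}$ with $H$ built from the operator $h\mapsto M(hw)w^{-1}$, and in the $p<p_0$ case the dual construction), actually yields the sharp dependence $[w]_{A_p}^{\max(1,(p_0-1)/(p-1))}$ on the $A_p$ constant; the paper notes this in Section~\ref{section:variations}. The paper's unified proof is shorter and conceptually cleaner---no case split, and the $A_{p_0}$ membership is immediate from factorization---but it pays for this with the worse exponent $1+(p_0-1)/(p-1)$. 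So your route is the ``sharp constant'' route; the paper's is the ``elegant'' route.
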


In the statement of Theorem~\ref{thm:rdf-extrapolation} we want to call
attention to the fact that while we can start with an endpoint
inequality (i.e., with the assumption that $p_0=1$), we cannot use
Rubio de Francia extrapolation to prove an endpoint inequality:  we must assume $p>1$.
To see that this restriction is natural, note that the operator $M^2=M\circ M$
is bounded on $L^p(w)$, $1<p<\infty$, $w\in A_p$, but does not 
satisfy an unweighted weak $(1,1)$ inequality.   It is possible to prove endpoint
estimates using generalizations of the extrapolation theorem, but much stronger, two weight
hypotheses are required.  See~\cite[Section~8.3]{MR2797562}.

\medskip

\begin{proof}
Before giving the details of the proof, we first sketch the basic
ideas underlying it.  To prove~\eqref{eqn:rdf-extrapol2}
from~\eqref{eqn:rdf-extrapol1} we need to pass between $L^p$ and
$L^{p_0}$ inequalities.  To do this we will use duality and
H\"older's inequality.  The original proofs of extrapolation required
two cases, depending on whether $p_0<p$ or $p_0>p$; we avoid this by
first dualising to $L^1$ and then using H\"older's inequality.  (This
comes with a cost:  see the discussion of sharp constants in
Section~\ref{section:variations} below.)

Next, to apply~\eqref{eqn:rdf-extrapol1} we need to construct an
$A_{p_0}$ weight, using only that we have a weight in $A_p$.  Here we
will use the Rubio de Francia iteration algorithm to construct $A_1$
weights, and then use reverse factorization (the easy half of
Theorem~\ref{thm:jones}) to form the desired weight.  

\medskip

Fix $p$, $1<p<\infty$, and $w\in A_p$.  We begin with the iteration algorithms.
Since $w\in A_p$, $\sigma=w^{1-p'} \in A_{p'}$.  Therefore, by
Theorem~\ref{thm:rdf-algorithm} we can
define the two iteration algorithms
\[
 \Rh_1 h_1 = \sum_{k=0}^\infty \frac{M^k h_1}{2^k \|M\|_{L^p(w)}^k}, \qquad
 \Rh_2 h_2 = \sum_{k=0}^\infty \frac{M^k h_2}{2^k \|M\|_{L^{p'}(\sigma)}^k},
\]
which satisfy the following properties: \\

\begin{tabular}{l l  l l}
 ($A_1$) & $h_1(x)\le \Rh_1 h_1(x)$
& \qquad
($A_2$) & $h_2(x)\le \Rh_2 h_2(x)$ \\[6pt]
($B_1$) & $\|\Rh_1 h_1\|_{L^{p}(w)}\le 2\|h_1\|_{L^{p}(w)}$
&\qquad 
($B_2$) & $\|\Rh_2 h_2\|_{L^{p'}(\sigma)}\le 2\|h_2\|_{L^{p'}(\sigma)}$
\\[6pt]
($C_1$) & $[\Rh_1 h_1]_{A_{1}}\le 2\|M\|_{L^{p}(w)}$
&\qquad
($C_2$) & $[\Rh_2 h_2]_{A_{1}}\le 2\|M\|_{L^{p'}(\sigma)}$.
\end{tabular}\\

\medskip

We now define $h_1$.  
Fix  $(f,g)\in \F$ such that
$\|f\|_{L^p(w)}<\infty$.  We may also assume $\|g\|_{L^p(w)}<\infty$,
since otherwise there is nothing to prove.    Define
\[ h_1 = \frac{f}{\|f\|_{L^p(w)}}+\frac{g}{\|g\|_{L^p(w)}}; \]
then $h_1 \in L^p(w)$ and $\|h_1\|_{L^p(w)}\leq 2$.   

We now prove the desired inequality.  
We will assume $1<p_0<\infty$;
the case $p_0=1$ requires some minor modifications to the argument and
we omit the details.  Since $f\in L^p(w)$, there exists a non-negative function
$h_2\in L^{p'}(w)$, $\|h_2\|_{L^{p'}(w)}=1$, such that 
\begin{align*}
\|f\|_{L^p(w)} 
& = \int_\subRn fh_2w\,dx. \\
\intertext{By ($A_2$) and H\"older's inequality, }
& \leq \int_\subRn f (\Rh_1 h_1)^{-\frac{1}{p_0'}}(\Rh_1
  h_1)^{\frac{1}{p_0'}}
\Rh_2 (h_2w)\,dx \\
& \leq \left(\int_\subRn f^{p_0}(\Rh_1 h_1)^{1-p_0}\Rh_2 (h_2w)\,dx \right)^{\frac{1}{p_0}}
\left(\int_\subRn \Rh_1 h_1 \Rh_2 (h_2w)\,dx
  \right)^{\frac{1}{p_0'}} \\
& = I_1^{\frac{1}{p_0}}\cdot I_2^{\frac{1}{p_0'}}.
\end{align*}

We first estimate $I_2$:  by $(B_1)$ and $(B_2)$, 
\begin{multline*}
I_2 
 = \int_\subRn \Rh_1 h_1 w^{\frac{1}{p}}\Rh_2 (h_2w) w^{-\frac{1}{p}}\,dx 
 \leq \|\Rh_1 h_1\|_{L^p(w)}\|\Rh_2(h_2w)\|_{L^{p'}(\sigma)} \\
 \leq 4\|h_1\|_{L^p(w)}\|h_2w\|_{L^{p'}(\sigma)} 
\leq 8\|h_2\|_{L^{p'}(w)} 
= 8.
\end{multline*}

To estimate $I_1$ we want to apply~\eqref{eqn:rdf-extrapol1}.  To do
so, first note that by $(C_1)$, $(C_2)$ and
Theorem~\ref{thm:jones},
\[ w_0 = (\Rh_1 h_1)^{1-p_0}\Rh_2 (h_2w) \in A_{p_0}. \]
Further, we have that $I_1<\infty$: by $(A_1)$, 
\[ \frac{f}{\|f\|_{L^p(w)}} \leq h_1 \leq \Rh_1 h_1, \]
and so 
\[ I_1 \leq \|f\|_{L^p(w)}^{p_0} \int_\subRn \Rh_1 h_1 \Rh_2
  (h_2w)\,dx < \infty. \]
Therefore, by~\eqref{eqn:rdf-extrapol1} and since, again by $(A_1)$,
\[ \frac{g}{\|g\|_{L^p(w)}} \leq h_1 \leq \Rh_1 h_1, \]
\[
I_1
 \lesssim \int_\subRn g^{p_0} (\Rh_1 h_1)^{1-p_0}\Rh_2 (h_2w)\,dx 
 \leq \|g\|_{L^p(w)}^{p_0} \int_\subRn \Rh_1 h_1 \Rh_2
  (h_2w)\,dx 
 \lesssim \|g\|_{L^p(w)}^{p_0}. 
\]
Combining these estimates we get~\eqref{eqn:rdf-extrapol2} and this
completes the proof.
\end{proof}

\bigskip

We will now prove three extensions of Rubio de Francia extrapolation
that are immediate consequences of Theorem~\ref{thm:rdf-extrapolation}
and the formalism of extrapolation pairs.

\begin{corollary} \label{cor:weak-rdf}
Given a family of extrapolation pairs $\F$, suppose that for some
$p_0$, $1\leq p_0<\infty$, and every $w_0\in A_{p_0}$,
\begin{equation} \label{eqn:weak-rdf1}
\|f\|_{L^{p_0,\infty}(w_0)} \leq C\|g\|_{L^{p_0}(w_0)}, \qquad (f,g)
\in \F. 
\end{equation}
Then for every $p$, $1<p<\infty$, and every $w\in A_p$,
\begin{equation} \label{eqn:weak-rdf2}
\|f\|_{L^{p,\infty}(w)} \leq C\|g\|_{L^{p}(w)}, \qquad (f,g)
\in \F. 
\end{equation}
\end{corollary}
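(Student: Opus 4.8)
The plan is to deduce the weak-type extrapolation result from the strong-type Theorem~\ref{thm:rdf-extrapolation} by a standard trick: a weak $(p,p)$ inequality can be rewritten as a family of strong-type inequalities for truncated/level-set versions of the pairs. Concretely, given the family $\F$ and $(f,g)\in\F$, for each $\lambda>0$ introduce the new pair $(f_\lambda, g)$ where $f_\lambda = \lambda \chi_{\{f>\lambda\}}$, and set
\[ \F_* = \{ (f_\lambda, g) : (f,g)\in\F,\ \lambda>0 \}. \]
The point is that $\|f\|_{L^{p,\infty}(w)}^p = \sup_{\lambda>0} \lambda^p w(\{f>\lambda\}) = \sup_\lambda \int_\subRn f_\lambda^p w\,dx$, so proving~\eqref{eqn:weak-rdf2} for $\F$ is exactly the same as proving the strong-type inequality $\int f_\lambda^p w\,dx \leq C\int g^p w\,dx$ for all pairs in $\F_*$, with a constant uniform in $\lambda$. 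The hypothesis~\eqref{eqn:weak-rdf1} says precisely that this strong-type inequality holds for $\F_*$ at the exponent $p_0$ and all $w_0\in A_{p_0}$: indeed, for $w_0\in A_{p_0}$,
\[ \int_\subRn f_\lambda^{p_0} w_0\,dx = \lambda^{p_0} w_0(\{f>\lambda\}) \leq \|f\|_{L^{p_0,\infty}(w_0)}^{p_0} \leq C\|g\|_{L^{p_0}(w_0)}^{p_0} = C\int_\subRn g^{p_0} w_0\,dx. \]

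Once this reformulation is in place, I apply Theorem~\ref{thm:rdf-extrapolation} directly to the family $\F_*$: for every $p$ with $1<p<\infty$ and every $w\in A_p$,
\[ \int_\subRn f_\lambda^{p} w\,dx \leq C\int_\subRn g^{p} w\,dx, \qquad (f_\lambda,g)\in \F_*. \]
Since the constant $C$ produced by the extrapolation theorem depends only on $\F_*$, $p$, $p_0$, $n$, and $[w]_{A_p}$ — and in particular not on $\lambda$ — I may take the supremum over $\lambda>0$ on the left, which gives $\|f\|_{L^{p,\infty}(w)}^p \leq C\|g\|_{L^p(w)}^p$, i.e.~\eqref{eqn:weak-rdf2}. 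One small bookkeeping point: the formalism requires the pairs in a family to be non-negative, measurable, and not a.e.\ zero; $f_\lambda$ is non-negative and measurable, and it is not identically zero precisely when $w(\{f>\lambda\})>0$, which holds for $\lambda$ below $\|f\|_{L^{p,\infty}(w)}$ (if that quantity is zero the inequality is trivial), so one restricts to such $\lambda$ when forming $\F_*$ — this does not affect the supremum.

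The only genuine subtlety, and the step I would be most careful about, is verifying that the convention governing these inequalities is respected: Theorem~\ref{thm:rdf-extrapolation} only asserts~\eqref{eqn:rdf-extrapol2} for pairs with $\|f_\lambda\|_{L^p(w)}<\infty$, and likewise the conclusion~\eqref{eqn:weak-rdf2} is only claimed when $\|f\|_{L^{p,\infty}(w)}<\infty$. If $\|f\|_{L^{p,\infty}(w)}=\infty$ there is nothing to prove; if it is finite, then each $f_\lambda = \lambda\chi_{\{f>\lambda\}}$ has $\int f_\lambda^p w\,dx = \lambda^p w(\{f>\lambda\}) \leq \|f\|_{L^{p,\infty}(w)}^p < \infty$, so the hypothesis of the strong-type conclusion is met for every such pair, and the argument goes through. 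Apart from this, the proof is purely formal — it is the canonical ``level-set'' device that lets weak-type inequalities be folded into the extrapolation-pairs framework, and it requires no new analysis beyond Theorem~\ref{thm:rdf-extrapolation} itself.
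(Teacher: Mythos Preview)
Your proof is correct and is essentially identical to the paper's own argument: both introduce the family of pairs $(\lambda\chi_{\{f>\lambda\}},g)$, verify that the weak-type hypothesis gives the strong-type inequality at $p_0$ for this new family, apply Theorem~\ref{thm:rdf-extrapolation}, and then take the supremum over $\lambda$. You are in fact a bit more careful than the paper about the conventions (non-zero pairs, finiteness of the left-hand side), but the underlying idea and execution are the same.
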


\begin{proof}
Define a new family
\[ \F' = \big\{ (f_t,g) =\big( t\chi_{\{ x: f(x)>t\}}, g\big) : (f,g) \in \F,
  t>0 \big\}.  \]
Then by our assumption~\eqref{eqn:weak-rdf1},
\[ \|f_t\|_{L^{p_0}(w_0)} = tw_0(\{ x\in \R^n : f(x)>t\})^{\frac{1}{p_0}}
\leq \|f\|_{L^{p_0,\infty}(w_0)} \leq C\|g\|_{L^{p_0}(w_0)}. \]
Therefore,~\eqref{eqn:rdf-extrapol1} holds for  the family $\F'$.  Hence,
for all $p$ and $w\in A_p$,~\eqref{eqn:rdf-extrapol2} holds for $\F'$
with a constant independent of $t$,
and this implies that~\eqref{eqn:weak-rdf2} holds.
\end{proof}

Our second corollary shows that vector-valued inequalities are an
immediate consequence of Rubio de Francia extrapolation.  

\begin{corollary} \label{cor:vv-rdf}
Given a family of extrapolation pairs $\F$, suppose that for some
$p_0$, $1\leq p_0<\infty$, and every $w_0\in A_{p_0}$,
\begin{equation} \label{eqn:vv-rdf1}
\|f\|_{L^{p_0}(w_0)} \leq C\|g\|_{L^{p_0}(w_0)}, \qquad (f,g)
\in \F. 
\end{equation}
Then for every $1<p,\,q<\infty$ and every $w\in A_p$,
\begin{equation} \label{eqn:vv-rdf2}
\bigg\|\bigg(\sum_i f_i^q\bigg)^{\frac{1}{q}}\bigg\|_{L^p(w)}
\leq 
C \bigg\|\bigg(\sum_i g_i^q\bigg)^{\frac{1}{q}}\bigg\|_{L^p(w)}, 
\qquad \{(f_i,g_i)\} \subset \F. 
\end{equation}
\end{corollary}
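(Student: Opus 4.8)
The plan is to deduce the vector-valued inequality \eqref{eqn:vv-rdf2} from Theorem~\ref{thm:rdf-extrapolation} by the standard trick of building a new family of extrapolation pairs whose members are $\ell^q$-norms of the original ones. Concretely, given the hypothesis \eqref{eqn:vv-rdf1} for the family $\F$, I would define
\[
\F_q = \bigg\{ \bigg( \Big(\sum_i f_i^q\Big)^{1/q}, \ \Big(\sum_i g_i^q\Big)^{1/q}\bigg) : \{(f_i,g_i)\} \subset \F \bigg\},
\]
where the sums are understood to be finite (one first proves the inequality with finitely many terms and then lets the number of terms tend to infinity, using the monotone convergence theorem). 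Each such pair consists of non-negative measurable functions that are not identically zero, so $\F_q$ is a legitimate family of extrapolation pairs.

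The key step is to verify that $\F_q$ satisfies the hypothesis \eqref{eqn:rdf-extrapol1} of the extrapolation theorem at the exponent $p_0$ — but applied with $p_0$ \emph{replaced by the new exponent} $q$. That is, I would check that for every $w_0 \in A_q$,
\[
\int_\subRn \Big(\sum_i f_i^q\Big) w_0\,dx = \sum_i \int_\subRn f_i^q\, w_0\,dx \leq C \sum_i \int_\subRn g_i^q\, w_0\,dx = \int_\subRn \Big(\sum_i g_i^q\Big)w_0\,dx.
\]
The middle inequality is exactly the hypothesis \eqref{eqn:vv-rdf1} applied term-by-term, \emph{except} that \eqref{eqn:vv-rdf1} is only assumed at the single exponent $p_0$, whereas here I need it at the exponent $q$. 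This is resolved by first invoking Theorem~\ref{thm:rdf-extrapolation} on the original family $\F$: from \eqref{eqn:vv-rdf1} at $p_0$ we obtain \eqref{eqn:rdf-extrapol2} for all $1 < r < \infty$ and all $w \in A_r$, in particular the scalar inequality $\|f\|_{L^q(w_0)} \leq C\|g\|_{L^q(w_0)}$ holds for every $w_0 \in A_q$ and every $(f,g) \in \F$. So I would apply extrapolation once to pass from $p_0$ to $q$, and then the display above shows \eqref{eqn:rdf-extrapol1} holds for $\F_q$ with base exponent $q$.

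Having established the hypothesis for $\F_q$ at exponent $q$, I apply Theorem~\ref{thm:rdf-extrapolation} a second time — now to the family $\F_q$ — to conclude that for every $p$, $1 < p < \infty$, and every $w \in A_p$,
\[
\bigg\| \Big(\sum_i f_i^q\Big)^{1/q}\bigg\|_{L^p(w)} \leq C \bigg\| \Big(\sum_i g_i^q\Big)^{1/q}\bigg\|_{L^p(w)},
\]
which is \eqref{eqn:vv-rdf2}. The constant depends only on $\F$, $p$, $q$, $n$ and $[w]_{A_p}$, as required. The only real subtlety — and the step I would be most careful about — is the finiteness bookkeeping: the extrapolation theorem is stated with the convention that the left-hand side of the conclusion is assumed finite, so one must argue that if $\big\|(\sum_i g_i^q)^{1/q}\big\|_{L^p(w)} < \infty$ then the corresponding quantity for the $f_i$ is finite, which again follows from applying extrapolation (and the $\ell^q$ inequality) with only finitely many terms present and passing to the limit by monotone convergence. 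There is no genuine obstacle here; the proof is a clean two-fold application of the abstract extrapolation theorem, which is precisely the payoff of the extrapolation-pairs formalism.
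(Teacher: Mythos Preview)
Your proposal is correct and is essentially identical to the paper's proof: define $\F_q$ from finite $\ell^q$-sums, apply Theorem~\ref{thm:rdf-extrapolation} once to $\F$ to get the scalar inequality at exponent $q$, sum to verify the hypothesis for $\F_q$ at $q$, apply Theorem~\ref{thm:rdf-extrapolation} a second time to $\F_q$, and finish by monotone convergence. One small remark on your finiteness discussion: the convention is that the inequality need only be verified for pairs with finite \emph{left}-hand side, so at the hypothesis-checking step you simply note that if $\|F\|_{L^q(w_0)}<\infty$ for a finite sum then each $\|f_i\|_{L^q(w_0)}<\infty$, which lets you apply the scalar inequality termwise---there is no need to deduce finiteness of the $f_i$-side from the $g_i$-side.
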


\begin{proof}
Fix $q$, $1<q<\infty$, and define the new family of extrapolation pairs
\[ \F_q = \bigg\{ (F,G)= 
\bigg(\bigg(\sum_i f_i^q\bigg)^{\frac{1}{q}}, \bigg(\sum_i
g_i^q\bigg)^{\frac{1}{q}}\bigg)
: (f_i,g_i) \in \F \bigg\}, \]
where all of the sums are taken to be finite.
Since~\eqref{eqn:vv-rdf1} holds, by
Theorem~\ref{thm:rdf-extrapolation}, \eqref{eqn:rdf-extrapol2} holds
with $p=q$ and $w\in A_q$.  Hence, for all $(F,G)\in \F_q$,
\[ \|F\|_{L^q(w)}^q = \sum_i \int_\subRn f_i^q w\,dx
\lesssim \sum_i \int_\subRn g_i^q w\,dx = \|G\|_{L^q(w)}^q. \]
If we take this as our hypothesis, we can again apply
Theorem~\ref{thm:rdf-extrapolation} to conclude that for $1<p<\infty$
and $w\in A_p$, 
\[ \|F\|_{L^p(w)} \lesssim \|G\|_{L^p(w)},  \qquad (F,G) \in \F_q. \]
But this in turn is equivalent to~\eqref{eqn:vv-rdf2} for all finite
sums.  By the monotone convergence theorem we may pass to arbitrary
sums, which completes the proof.
\end{proof}

Our final corollary shows that we can rescale extrapolation families
and so derive the $A_\infty$ extrapolation theorem first proved
in~\cite{cruz-uribe-martell-perez04}. 

\begin{corollary} \label{cor:Ainfty-rdf}
Given a family of extrapolation pairs $\F$, suppose that for some
$p_0$, $0< p_0<\infty$, and every $w_0\in A_{\infty}$,
\begin{equation} \label{eqn:Ainfty-rdf1}
\|f\|_{L^{p_0}(w_0)} \leq C\|g\|_{L^{p_0}(w_0)}, \qquad (f,g)
\in \F. 
\end{equation}
Then for every $p$, $0<p<\infty$, and every $w\in A_\infty$,
\begin{equation} \label{eqn:Ainfty-rdf2}
\|f\|_{L^{p}(w)} \leq C\|g\|_{L^{p}(w)}, \qquad (f,g)
\in \F. 
\end{equation}
\end{corollary}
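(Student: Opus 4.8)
The plan is to deduce the $A_\infty$ extrapolation theorem from the classical extrapolation theorem, Theorem~\ref{thm:rdf-extrapolation}, by rescaling the family $\F$. The mechanism is simple: raising a pair of functions to a fixed positive power converts a norm inequality at one exponent into a norm inequality at a proportionally rescaled exponent. The hypothesis~\eqref{eqn:Ainfty-rdf1} holds for \emph{all} weights in $A_\infty$, so after rescaling to push the working exponent up to a large value $s$, the $A_p$-type hypothesis demanded by Theorem~\ref{thm:rdf-extrapolation} will range over the class $A_{p_0 s/p}$, which sits inside $A_\infty$; and the $A_s$ class produced by the conclusion of Theorem~\ref{thm:rdf-extrapolation} can be made to contain any prescribed $A_\infty$ weight by taking $s$ large. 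Thus the apparent gap between ``$w\in A_p$'' and ``$w\in A_\infty$'' disappears.

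In detail, for $t>0$ I would introduce the rescaled family $\F^t=\{(f^t,g^t):(f,g)\in\F\}$. Since $f^t,\,g^t$ are again non-negative and not equal to $0$ a.e., $\F^t$ is a legitimate family of extrapolation pairs, and the identity $\|f^t\|_{L^r(v)}=\|f\|_{L^{tr}(v)}^t$ shows that the finiteness conventions transfer without loss: $\|f^t\|_{L^r(v)}<\infty$ exactly when $\|f\|_{L^{tr}(v)}<\infty$.

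Now fix the target data $0<p<\infty$ and $w\in A_\infty$. Since $A_\infty=\bigcup_{q\ge 1}A_q$, choose $s_0\ge 1$ with $w\in A_{s_0}$, and then fix any $s>\max(1,\,p/p_0,\,s_0)$, so that $w\in A_s$ and $p_0 s/p>1$. Put $\G=\F^{p/s}$ and write a generic pair as $(F,G)=(f^{p/s},g^{p/s})$. Since $F^{p_0 s/p}=f^{p_0}$, the hypothesis~\eqref{eqn:Ainfty-rdf1} for $\F$ gives, for every $w_0\in A_\infty$,
\[
\int_\subRn F^{p_0 s/p}\,w_0\,dx \le C\int_\subRn G^{p_0 s/p}\,w_0\,dx, \qquad (F,G)\in\G,
\]
and in particular this holds for every $w_0\in A_{p_0 s/p}$. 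As $p_0 s/p>1$, this is precisely hypothesis~\eqref{eqn:rdf-extrapol1} for the family $\G$ with base exponent $p_0 s/p$, so Theorem~\ref{thm:rdf-extrapolation} yields
\[
\int_\subRn F^{r}\,v\,dx \le C\int_\subRn G^{r}\,v\,dx, \qquad (F,G)\in\G,
\]
for every $r\in(1,\infty)$ and every $v\in A_r$. Taking $r=s$ (legitimate since $s>1$) and $v=w\in A_s$, and unwinding $\G=\F^{p/s}$ via $F^s=f^p$, we obtain $\int_\subRn f^p w\,dx\le C\int_\subRn g^p w\,dx$ for all $(f,g)\in\F$ with $\|f\|_{L^p(w)}<\infty$, which is~\eqref{eqn:Ainfty-rdf2}.

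Once the rescaling idea is in hand the argument is essentially bookkeeping, so there is no serious obstacle; the only point needing a little care is to choose $s$ large enough in three respects simultaneously: large enough that $p_0 s/p>1$ (so that Theorem~\ref{thm:rdf-extrapolation} applies, and we stay away from its $p_0=1$ endpoint version), large enough that $s>1$ (so the target exponent $s$ lies in the admissible range $(1,\infty)$), and large enough that the given $A_\infty$ weight $w$ lies in $A_s$. One should also record that the constant obtained depends on $w$ only through $s$ and $[w]_{A_s}$, hence only through the data witnessing $w\in A_\infty$, consistent with the standing conventions; note that the hypothesis exponent $p_0$ may be any number in $(0,\infty)$, with no special treatment of $p_0=1$ required here.
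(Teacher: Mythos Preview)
Your proof is correct and is essentially the same as the paper's: both rescale the family $\F$ by a power so that the hypothesis becomes an $A_{q_0}$-type inequality at some exponent $q_0>1$, apply Theorem~\ref{thm:rdf-extrapolation}, and then choose the free parameters so that the conclusion lands on the desired $p$ and $w\in A_\infty$. The only cosmetic difference is the order of quantifiers---you fix the target $(p,w)$ first and then pick $s$, whereas the paper introduces the rescaling parameter $q_0$ first and specializes at the end---and your parenthetical about ``staying away from the $p_0=1$ endpoint'' is unnecessary since Theorem~\ref{thm:rdf-extrapolation} allows $p_0=1$ in its hypothesis.
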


\begin{proof}
Fix $q_0$, $1<q_0<\infty$, and define the new family 
\[ \F_0 = \{ (F,G) = (f^{\frac{p_0}{q_0}},g^{\frac{p_0}{q_0}}) : (f,g)
  \in \F \}. \]
Then for every weight $w_0 \in A_{q_0}$ and every pair $(F,G)\in \F_0$, 
\[ \int_\subRn F^{q_0}w_0\,dx = \int_\subRn f^{p_0}w_0\,dx 
\lesssim \int_\subRn g^{p_0}w_0\,dx = \int_\subRn G^{q_0}w_0\,dx. \]
Therefore, \eqref{eqn:rdf-extrapol1} holds with $p_0=q_0$ for the
family $\F_0$, and so by Theorem~\ref{thm:rdf-extrapolation}, for any
$q$, $1<q<\infty$, and $w\in A_q$, 
$\|F\|_{L^q(w)} \lesssim  \|G\|_{L^q(w)}$, $(F,G)\in \F_0$.
Equivalently, 
\begin{equation} \label{eqn:Ainfty-rdf3}
 \int_\subRn f^{\frac{p_0}{q_0}q}w\,dx \lesssim 
\int_\subRn g^{\frac{p_0}{q_0}q}w\,dx,
\qquad (f,g) \in \F. 
\end{equation}
To complete the proof, we use that we can choose $q_0$ and $q$
freely.  Fix $0<p<\infty$ and $w\in A_\infty$.  Then $w\in A_q$ for
some $q>1$, and since the Muckenhoupt classes are nested, we may
assume that $q>\frac{p}{p_0}$.  Therefore, we can fix $q_0>1$ such
that $q=\frac{p}{p_0}q_0$, or $\frac{p_0}{q_0}q=p$.   Then
\eqref{eqn:Ainfty-rdf3} gives us~\eqref{eqn:Ainfty-rdf2}.  
\end{proof}

\section{Applications of Rubio de Francia extrapolation}
\label{section:examples}

In this section we give three applications of Rubio de Francia
extrapolation and the extensions proved in the last section.  These
examples are not exhaustive but should give some sense of the
ways in which extrapolation can be used.  

First, however, we consider further the technical hypothesis that we only
work with extrapolation pairs $(f,g)$ for which the left-hand side of
the weighted norm inequality in question is finite.   We can eliminate
this hypothesis with the following approximation argument.  Given a
family $\F$, we define a new family
\[ \F_0 = \{  (F,G) = (\min(f,N)\chi_{B(0,N)},g) : (f,g) \in \F, N \in \N
  \}.  \]
Since a weight $w\in A_\infty$ is locally integrable, we have that for
any $p$, $0<p<\infty$, and any pair $(F,G)\in \F_0$, 
\[ \int_\subRn F^p w\,dx \leq N^p w(B(0,N)) < \infty.  \]
Therefore, we can apply Theorem~\ref{thm:rdf-extrapolation} to the
family $\F_0$; the desired inequality for a given pair $(f,g)\in \F$,
whether or not $\|f\|_{L^p(w)}$ is finite, follows from the monotone
convergence theorem if we let $N\rightarrow \infty$.  

Given this reduction, it is now straightforward to prove weighted norm
inequalities for an operator $T$.  Suppose, for instance, that  for some $p_0\geq 1$
and $w_0\in A_{p_0}$ we know that 
\[ \|Tf\|_{L^{p_0}(w_0)} \lesssim \|f\|_{L^{p_0}(w_0)}, \]
where the constant depends only on $T$, $p_0$, $n$, $T$ and
$[w]_{A_{p_0}}$.  Then, in particular, it holds for some suitable
dense subset $\mathcal{X}$ of this space: e.g., $\mathcal{X}=L^\infty_c$, $C_c^\infty$, etc.
(Indeed, this inequality may only have been proved for functions in
this dense family.)  Then if we define the family of extrapolation
pairs 
\[ \F = \{ (|Tf|,|f|) :  f\in \mathcal{X} \}, \]
we have that the hypothesis~\eqref{eqn:rdf-extrapol1} of
Theorem~\ref{thm:rdf-extrapolation} holds, and so we can conclude that
for all $p$ and $w\in A_p$, \eqref{eqn:rdf-extrapol2} holds.  If we do
not know {\em a priori} that the left-hand side of this inequality is
finite, then we can apply the theorem to a family $\F_0$ defined as
above, and get the desired conclusion via approximation.   To prove
that the operator is bounded on all $f\in L^p(w)$, it suffices to use
another standard approximation argument. 

\medskip

We now turn to our examples.  The first is the well-known
vector-valued inequality for the maximal operator.  In the unweighted
case this was proved by Fefferman and Stein~\cite{fefferman-stein71};
the weighted estimate is due to Andersen and
John~\cite{andersen-john80}.   We want to emphasize that given the
scalar inequality in Theorem~\ref{thm:Ap-max}, the vector-valued inequality is an
immediate consequence of Corollary~\ref{cor:vv-rdf}:  no further work is required.

\begin{theorem} \label{thm:vv-max}
For every $1<p,\,q<\infty$ and every $w\in A_p$, 
\[  \bigg\|\bigg(\sum_i (Mf_i)^q\bigg)^{\frac{1}{q}}\bigg\|_{L^p(w)}
\lesssim
 \bigg\|\bigg(\sum_i |f_i|^q\bigg)^{\frac{1}{q}}\bigg\|_{L^p(w)}. \]
\end{theorem}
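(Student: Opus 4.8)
The plan is to obtain Theorem~\ref{thm:vv-max} as an immediate consequence of the vector-valued extrapolation result, Corollary~\ref{cor:vv-rdf}, fed by the scalar weighted bound for $M$ established in Theorem~\ref{thm:Ap-max}; essentially no new analysis is required.

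First I would fix a convenient dense class, say $\mathcal{X}=L^\infty_c$, and introduce the family of extrapolation pairs
\[ \F = \{ (Mf,\, |f|) : f \in \mathcal{X},\ f\not\equiv 0 \}. \]
Each such pair consists of non-negative functions, neither of them a.e.\ equal to $0$ (indeed $Mf>0$ everywhere when $f\not\equiv 0$), so $\F$ is a legitimate family of extrapolation pairs.  To verify the hypothesis~\eqref{eqn:vv-rdf1} of Corollary~\ref{cor:vv-rdf}, I would fix any single exponent $p_0$ with $1<p_0<\infty$, say $p_0=2$.  For every $w_0\in A_{p_0}$, part~(3) of Theorem~\ref{thm:Ap-max} gives
\[ \|Mf\|_{L^{p_0}(w_0)} \lesssim \|f\|_{L^{p_0}(w_0)}, \qquad f\in\mathcal{X}, \]
with a constant depending only on $n$, $p_0$ and $[w_0]_{A_{p_0}}$, and not on the particular weight $w_0$; moreover both sides are finite, since $f\in L^\infty_c$ and $w_0$ is locally integrable.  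This is precisely~\eqref{eqn:vv-rdf1} for the family $\F$.

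Applying Corollary~\ref{cor:vv-rdf} directly then yields~\eqref{eqn:vv-rdf2}, which for this family is exactly
\[ \bigg\|\bigg(\sum_i (Mf_i)^q\bigg)^{\frac1q}\bigg\|_{L^p(w)} \lesssim \bigg\|\bigg(\sum_i |f_i|^q\bigg)^{\frac1q}\bigg\|_{L^p(w)}, \qquad f_i\in\mathcal{X}, \]
for all $1<p,q<\infty$ and all $w\in A_p$, the passage from finite to arbitrary sums being already carried out inside the proof of that corollary.  To finish I would remove the restriction $f_i\in\mathcal{X}$ by the standard density argument sketched at the start of Section~\ref{section:examples}: replace each $f_i$ by $\min(|f_i|,N)\chi_{B(0,N)}$ (equivalently, work with the family $\F_0$ of that section), apply the inequality just obtained, and let $N\to\infty$ using the monotone convergence theorem, thereby extending the estimate to all $f_i\in L^p(w)$.

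There is honestly no hard step here: all of the analytic content lives in the scalar inequality of Theorem~\ref{thm:Ap-max} and in the extrapolation machinery of Section~\ref{section:extrapolation}.  The only point that requires a modicum of care is the bookkeeping that keeps the left-hand sides finite before extrapolation is invoked---handled by restricting attention to $\mathcal{X}=L^\infty_c$ or by passing to the truncated family $\F_0$---together with the final density reduction.  This theorem is included precisely to illustrate that, once the scalar weighted maximal inequality is known, its vector-valued analogue comes for free.
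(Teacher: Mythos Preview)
Your proposal is correct and follows exactly the approach the paper indicates: the scalar weighted inequality for $M$ from Theorem~\ref{thm:Ap-max} serves as hypothesis~\eqref{eqn:vv-rdf1}, and Corollary~\ref{cor:vv-rdf} immediately delivers the vector-valued estimate, with the density/truncation cleanup you describe being the standard reduction sketched at the start of Section~\ref{section:examples}. The paper itself presents this as requiring ``no further work,'' and your write-up simply spells out the bookkeeping.
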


Similar vector-valued inequalities hold for other operators, such as
Calder\'on-Zygmund singular integral operators and commutators.  We
refer the reader to~\cite{cruz-uribe-martell-perez04,MR2797562} for
further examples.

Our second example uses extrapolation to prove the Coifman-Fefferman
inequality relating singular integrals and the maximal
operator~\cite{coifman-fefferman74}. 

\begin{theorem} \label{thm:coifman-fefferman}
Let $T$ be any Calder\'on-Zygmund singular integral operator.  Then
for $0<p<\infty$, $w\in A_\infty$ and $f\in L_c^\infty$,
\begin{equation} \label{eqn:cf-1}
\int_\subRn |Tf|^pw\,dx \lesssim \int_\subRn (Mf)^p w\,dx. 
\end{equation}
\end{theorem}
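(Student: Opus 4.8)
The plan is to reduce the whole range of exponents to a single $L^1$ estimate by means of the $A_\infty$ extrapolation theorem, Corollary~\ref{cor:Ainfty-rdf}. Apply that corollary to the family of extrapolation pairs
\[ \F = \{ (|Tf|,\,Mf) : f\in L_c^\infty \}. \]
With the choice $p_0=1$, it suffices to prove that for \emph{every} $w_0\in A_\infty$ one has
\[ \int_\subRn |Tf|\,w_0\,dx \lesssim \int_\subRn Mf\,w_0\,dx, \qquad f\in L_c^\infty, \]
with a constant depending only on $n$, $T$, and $[w_0]_{A_\infty}$; Corollary~\ref{cor:Ainfty-rdf} then produces~\eqref{eqn:cf-1} for all $0<p<\infty$ and all $w\in A_\infty$ with no further argument. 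This is precisely the situation extrapolation is designed for: the analytic content is concentrated at one convenient exponent.

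To prove the $L^1$ estimate I would use the classical Coifman-Fefferman good-$\lambda$ inequality~\cite{coifman-fefferman74}. Writing $M^d$ for the dyadic maximal operator, the assertion is that there are constants $C>0$ and $\delta>0$, depending only on $n$ and $[w_0]_{A_\infty}$, such that for every $\gamma\in(0,1)$ and every $\lambda>0$,
\[ w_0\big(\{x: M^d(|Tf|)(x)>2\lambda,\ Mf(x)\le\gamma\lambda\}\big) \le C\gamma^{\delta}\,w_0\big(\{x: M^d(|Tf|)(x)>\lambda\}\big). \]
This is proved just as in the unweighted case: decompose $\{M^d(|Tf|)>\lambda\}=\bigcup_j Q_j$ via Lemma~\ref{lemma:CZ-cubes}; on each $Q_j$, splitting $f=f\chi_{3Q_j}+f\chi_{\R^n\setminus 3Q_j}$ and using the size and smoothness estimates for the kernel of $T$ together with the weak $(1,1)$ bound for $T$, one obtains the Lebesgue-measure estimate
\[ |\{x\in Q_j: M^d(|Tf|)(x)>2\lambda,\ Mf(x)\le\gamma\lambda\}|\le C\gamma\,|Q_j|; \]
then Corollary~\ref{cor:alt-Ainfty} upgrades this to $w_0(\,\cdot\,\cap Q_j)\le C\gamma^{\delta}w_0(Q_j)$, and one sums in $j$. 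Given the good-$\lambda$ inequality, one writes $\int_\subRn M^d(|Tf|)\,w_0\,dx=\int_0^\infty w_0(\{M^d(|Tf|)>\lambda\})\,d\lambda$, inserts $w_0(\{M^d(|Tf|)>2\lambda\})\le w_0(\{Mf>\gamma\lambda\})+C\gamma^{\delta}w_0(\{M^d(|Tf|)>\lambda\})$, and integrates to reach
\[ \tfrac12\int_\subRn M^d(|Tf|)\,w_0\,dx \le \frac{1}{\gamma}\int_\subRn Mf\,w_0\,dx + C\gamma^{\delta}\int_\subRn M^d(|Tf|)\,w_0\,dx. \]
Fixing $\gamma$ so small that $C\gamma^{\delta}\le\frac14$ absorbs the last term, and using $|Tf|\le M^d(|Tf|)$ a.e., we get the desired bound. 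The absorption requires the integral of $M^d(|Tf|)$ against $w_0$ to be finite a priori; this is arranged by first running the argument with $M^d(|Tf|)$ replaced by its truncations $\min(M^d(|Tf|),N)\chi_{B(0,N)}$, for which the good-$\lambda$ inequality holds with constants independent of $N$, and then letting $N\to\infty$ by monotone convergence.

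The main obstacle is the good-$\lambda$ inequality, and within it the local Lebesgue-measure estimate on each Calder\'on-Zygmund cube: this is the only step where the Calder\'on-Zygmund structure of $T$ (kernel decay and regularity, together with the weak $(1,1)$ bound) enters, and it is the technical heart of the Coifman-Fefferman theorem; the details can be found in~\cite{coifman-fefferman74,duoandikoetxea01}. The remaining steps are routine: the passage from Lebesgue measure to $w_0$ is exactly Corollary~\ref{cor:alt-Ainfty}, and the passage from the single exponent $p_0=1$ to the full range $0<p<\infty$ is precisely what Rubio de Francia extrapolation provides. Alternatively, if one is willing to invoke the pointwise sparse domination of $T$ from the dyadic theory, the $L^1$ bound for $w_0\in A_\infty$ follows directly --- with no good-$\lambda$ argument and no absorption --- from $\langle |f|\rangle_Q\le\essinf_Q Mf$, the sparseness of the associated cubes, and the estimate $w_0(Q)\lesssim w_0(E_Q)$ that underlies Corollary~\ref{cor:alt-Ainfty}; see~\cite{CruzUribe:2016ji}.
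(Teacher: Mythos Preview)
Your reduction to $p_0=1$ via Corollary~\ref{cor:Ainfty-rdf} applied to the family $\{(|Tf|,Mf):f\in L^\infty_c\}$ is exactly the paper's first step. For the $L^1$ estimate, however, the paper takes the route you mention only as an afterthought in your final paragraph: it quotes the pointwise sparse domination $|Tf|\lesssim\sum_t T_{\Ss_t}(|f|)$ and then proves $\int_\subRn T_\Ss f\,w\,dx\lesssim\int_\subRn M^{\D^t}f\,w\,dx$ in three lines, using only $\avgint_Q f\,dy\le\inf_{E_Q} M^{\D^t}f$, the bound $w(Q)\lesssim w(E_Q)$ (which is Lemma~\ref{lemma:Ap-prop}, not Corollary~\ref{cor:alt-Ainfty} --- the latter goes in the opposite direction), and disjointness of the $E_Q$. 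There is no good-$\lambda$ inequality, no absorption, and no direct appeal to kernel estimates or the weak $(1,1)$ bound for $T$; those are packaged inside the cited sparse-domination theorem.

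Your principal route --- the classical Coifman--Fefferman good-$\lambda$ argument --- is correct in substance and has the virtue of being self-contained modulo standard Calder\'on--Zygmund theory, whereas the paper's argument is shorter but outsources the hard analysis to the sparse bound. One small wrinkle in your write-up: the truncation $\min(M^d(|Tf|),N)\chi_{B(0,N)}$ you propose for the absorption does not obviously inherit a good-$\lambda$ inequality with the \emph{truncated} superlevel set on the right-hand side, since restricting to $B(0,N)$ cuts across the Calder\'on--Zygmund cubes of $\{M^d(|Tf|)>\lambda\}$. The standard remedies (work instead with the maximal singular integral $T^*f$, or secure a priori finiteness by other means for $f\in L^\infty_c$) repair this, so it is a technicality rather than a gap.
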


\begin{proof}
By Corollary~\ref{cor:Ainfty-rdf} it will suffice to prove~\eqref{eqn:cf-1}
when $p=1$.   We will sketch an easy proof in this case using the
theory of dyadic grids and sparse operators.  In the past decade, this
approach has come to play a central role in the theory of weighted
norm inequalities in harmonic analysis, starting with Hyt\"onen's
proof of the $A_2$ conjecture~\cite{hytonenP2010} (see
also~\cite{2014arXiv1409.4351C, MR3085756,lerner-nazarov14}).  For an
overview of these techniques (though from the perspective of fractional
integral operators) see~\cite{CruzUribe:2016ji}.

We begin by defining $3^n$ translates of the standard dyadic grid
using the so-called ``one-third'' trick:
\[ \D^t = \{ 2^j([0,1)^n + m + t) : j \in \Z, m \in \Z^n \}, \quad t
  \in \big\{0,\pm 1/3\big\}^n.  \]
The translation by $t$ does not affect any of the underlying
properties of the dyadic cubes.   In particular,
Lemmas~\ref{lemma:CZ-cubes} and~\ref{lemma:wtd-max-op} are still true,
in the latter replacing $M_\sigma^d$ with $M_\sigma^{\D^t}$, the dyadic maximal
defined with respect to cubes in $\D^t$.

A set $\Ss \subset \D^t$ is said to be
sparse if for every $Q\in \Ss$ there exists a measurable set $E_Q\subset Q$ such
that $|E_Q|\geq \frac{1}{2}|Q|$ and the sets $E_Q$ are pairwise
disjoint.    A sparse operator is a positive linear operator of the form
\[ T_\Ss f(x) = \sum_{Q\in \Ss} \avgint_Q f(y)\,dy \cdot\chi_Q(x). \]
These operators are dyadic models of Calder\'on-Zygmund singular
integrals.  More importantly, we have the following pointwise
estimate:  given a Calder\'on-Zygmund singular
integral $T$ and a function $f\in L_c^\infty$, there exist sparse
sets $\Ss_t\subset \D^t$ such that 
\begin{equation} \label{eqn:sparse-domination}
 |Tf(x)| \lesssim \sum_{t \in \{0,\pm1/3\}^n}
  T_{\Ss_t}(|f|)(x). 
\end{equation}
This estimate was originally proved by Lerner and
Nazarov~\cite{lerner-nazarov14} and independently by Conde-Alonso and
Rey~\cite{2014arXiv1409.4351C}.  Since then there have been a number
of new proofs and extensions:  see, for instance,
Lerner~\cite{MR3484688}, Hyt\"onen, {\em et
  al.}~\cite{MR3625128},  Lacey~\cite{Lacey:2015wf}, and
Conde-Alonso, {\em et al.}~\cite{diPlinio:2016ux}.

Given inequality~\eqref{eqn:sparse-domination}, to complete the proof
it will suffice to show that  given any sparse
set $\Ss\subset \D^t$ and $w\in A_\infty$, for non-negative $f\in
L^\infty_c$, 
\[ \int_\subRn T_\Ss f\, w\,dx \lesssim \int_\subRn Mf\, w\,dx; \]
in fact, we will prove this inequality with the Hardy-Littlewood
maximal operator replaced by the smaller dyadic maximal operator
$M^{\D^t}$ defined with respect to the cubes in $\D^t$.   But this is
almost trivial:  by Lemma~\ref{lemma:Ap-prop},
\begin{multline*}
\int_\subRn T_\Ss f\, w\,dx 
= \sum_{Q\in \Ss} \avgint_Q f\,dy \cdot w(Q)
\lesssim \sum_{Q\in \Ss} \avgint_Q f(y)\,dy \cdot w(E_Q) \\
 \leq \sum_{Q\in \Ss} \int_{E_Q} M^{\D^t}(f) w\,dx 
 \leq \int_\subRn M^{\D^t}(f) w\,dx. 
\end{multline*}
\end{proof}

For our final application we consider weighted norm inequalities for rough
singular integrals.  Unlike the previous results which were originally
proved without extrapolation, the following theorem was  proved
by Duoandikoetxea and Rubio de
Francia~\cite{duoandikoetxea-rubiodefrancia86} using extrapolation in
a critical way.  For a version of this result with quantitative
estimates on the constants, see~\cite{MR3625128}.  For a
generalization to a larger class of rough singular integrals,
see~\cite{diPlinio:2016ux}.

By a rough singular integral we mean the singular convolution operator
\[ T_\Omega f(x) = \text{p.v.}\int_\subRn 
\frac{\Omega(y/|y|)}{|y|^n}f(x-y)\,dy, \]
where $\Omega\in L^\infty(S^{n-1})$ and $\int_{S^{n-1}}\Omega\,dx =
0$. 

\begin{theorem} \label{thm:rough:sio}
Given a rough singular integral $T_\Omega$, for every $1<p<\infty$ and
every $w\in
A_p$, 
\begin{equation} \label{eqn:rough1}
\int_\subRn |T_\Omega f|^pw\,dx \lesssim \int_\subRn |f|^pw\,dx. 
\end{equation}
\end{theorem}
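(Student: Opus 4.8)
The plan is to deduce the full range of weighted estimates from a single weighted $L^2$ estimate by \emph{extrapolation}: applying Theorem~\ref{thm:rdf-extrapolation} with $p_0=2$ to the family $\F=\{(|T_\Omega f|,|f|):f\in L^\infty_c\}$ (after the standard truncation reduction of Section~\ref{section:examples} to handle the finiteness hypothesis), it suffices to prove
\[
\int_\subRn |T_\Omega f|^2 w\,dx \lesssim \int_\subRn |f|^2 w\,dx, \qquad w\in A_2,
\]
with the implied constant allowed to depend on $[w]_{A_2}$ (this is where extrapolation ``does the work'': the unweighted bound alone would not suffice to reach all of $A_p$).

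To prove the weighted $L^2$ bound I would decompose the kernel dyadically. With $K(y)=\Omega(y/|y|)|y|^{-n}$, put $K_j=K\,\chi_{\{2^j\le|y|<2^{j+1}\}}$ and $T_jf=K_j*f$, so $T_\Omega f=\sum_{j\in\Z}T_jf$. Fix a smooth Littlewood--Paley family $\{S_k\}_{k\in\Z}$ with $\sum_k S_k^2=I$ and $\widehat{S_kf}$ supported where $|\xi|\sim 2^k$, and regroup $\sum_j\sum_k S_kT_jS_k$ according to $s=j+k$, writing $T_\Omega=\sum_{s\in\Z}U_s$ with $U_s=\sum_j S_{s-j}T_jS_{s-j}$. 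Two facts drive the estimate: \textbf{(i)} the Fourier bound $|\widehat{K_j}(\xi)|\lesssim\min\big(|2^j\xi|,\,|2^j\xi|^{-\delta}\big)$ for some $\delta\in(0,1)$, whose low-frequency half follows from $\int_{S^{n-1}}\Omega=0$ and whose high-frequency half follows from an oscillation (van der Corput) estimate using only $\Omega\in L^\infty(S^{n-1})$; and \textbf{(ii)} the pointwise domination $|T_jg(x)|\lesssim\|\Omega\|_\infty Mg(x)$, immediate from $|K_j|\lesssim\|\Omega\|_\infty 2^{-jn}\chi_{\{|y|<2^{j+1}\}}$.

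From (i) and the essentially disjoint frequency supports of $\{S_{s-j}\}_j$, an almost-orthogonality argument gives the \emph{unweighted} decay $\|U_sf\|_{L^2(\subR^n)}\lesssim 2^{-\epsilon|s|}\|f\|_{L^2(\subR^n)}$ for some $\epsilon>0$. From (ii), the Fefferman--Stein vector-valued maximal inequality (Theorem~\ref{thm:vv-max}), and the weighted Littlewood--Paley square-function estimates (which themselves follow from Corollary~\ref{cor:vv-rdf}), one obtains for every $1<p<\infty$ and $v\in A_p$ the \emph{weighted} bound
\[
\|U_sf\|_{L^p(v)}\lesssim \Big\|\Big(\sum_j|T_jS_{s-j}f|^2\Big)^{1/2}\Big\|_{L^p(v)}\lesssim \Big\|\Big(\sum_j(MS_{s-j}f)^2\Big)^{1/2}\Big\|_{L^p(v)}\lesssim\|f\|_{L^p(v)},
\]
with constant independent of $s$ (the shifted family $\{S_{s-j}\}_j$ is merely a reindexing of $\{S_k\}_k$, so uniformity in $s$ is automatic), though depending on $[v]_{A_p}$. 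Now fix $w\in A_2$; then $w$ and $w^{-1}$ both lie in $A_\infty$, so by the reverse H\"older inequality (Theorem~\ref{thm:rh-ineq}) there is $r>1$ with $w^r\in A_2$. Writing $w=1^{1-1/r}\cdot(w^r)^{1/r}$ and interpolating with change of measure between the unweighted $L^2$ bound and the $L^2(w^r)$ bound for $U_s$ yields $\|U_sf\|_{L^2(w)}\lesssim 2^{-\epsilon(1-1/r)|s|}\|f\|_{L^2(w)}$; summing over $s$ gives the desired $L^2(w)$ bound for $T_\Omega$ and completes the proof.

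I expect the main obstacle to be step (i): the high-frequency decay $|\widehat{K_j}(\xi)|\lesssim|2^j\xi|^{-\delta}$ is delicate precisely because $\Omega$ carries no smoothness, and it must be squeezed out of the oscillation of $e^{-iy\cdot\xi}$ over the annulus --- passing to polar coordinates and integrating by parts in the radial variable produces a gain of order $|2^j\xi|^{-1}\log(2+|2^j\xi|)$, which is more than enough. A secondary (but routine) point is making the almost-orthogonality for the unweighted decay and the uniform-in-$s$ weighted square-function estimates fully rigorous once the Littlewood--Paley decomposition is in place.
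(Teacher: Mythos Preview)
Your proposal is correct and follows essentially the same approach as the paper's proof, which sketches the Duoandikoetxea--Rubio de Francia argument: extrapolation reduces matters to $L^2(w)$ for $w\in A_2$; a decomposition $T_\Omega=\sum_j T_j$ (your $U_s$) yields unweighted $L^2$ decay via Fourier estimates and a uniform weighted $L^2(w)$ bound via weighted Littlewood--Paley theory; the reverse H\"older inequality furnishes $w^{1+\epsilon}\in A_2$; and Stein--Weiss interpolation with change of measure supplies the summable decay in $L^2(w)$. You have simply made the decomposition explicit (separating the annular pieces $T_j$ from the regrouped operators $U_s=\sum_j S_{s-j}T_jS_{s-j}$), whereas the paper writes the final operators as $T_j$ directly, but the strategy is identical.
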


\begin{proof}
We sketch the argument in~\cite{duoandikoetxea-rubiodefrancia86},
emphasizing those parts of the proof that are more widely applicable.
We begin with the key reduction: by Theorem~\ref{thm:rdf-extrapolation} it suffices to
prove~\eqref{eqn:rough1} when $p=2$ and $w\in A_2$. 

Using Fourier transform techniques and Littlewood-Paley theory, they
showed that there exist operators $T_j$, $j\in \Z$, such that for all
$f\in L^2$,
\begin{equation} \label{eqn:Rdf-JD}
 T_\Omega f(x) = \sum_j T_jf(x). 
\end{equation}
Moreover, they showed that there exist $C,\,\alpha>0$ such that for all
$j$
\begin{equation} \label{eqn:unwtd}
  \|T_jf\|_2 \leq C2^{-\alpha |j|}\|f\|_2.  
\end{equation}
Thus, in particular, the series decomposition of $T_\Omega$ converges
in $L^2$.    

To get estimates in $L^2(w)$, $w\in A_2$, they used weighted
Littlewood-Paley theory~\cite{MR561835,MR1821243} to prove that for
all $f\in C_c^\infty$, 
\[ \|T_j f\|_{L^2(w)} \leq C \|f\|_{L^2(w)}, \]
where the constant $C>0$ is independent of $j$ and depends only
on $[w]_{A_2}$ and not on the weight itself.  However, the
constant has no decay, so this inequality cannot be used to directly prove
weighted norm inequalities for $T_\Omega$.  

To overcome this, note that since $w\in A_2$, $w^{-1} \in A_2$, so by the reverse
H\"older inequality (applied twice) there exists $\epsilon>0$ such that
$w^{1+\epsilon}\in A_2$, and in fact we can choose $\epsilon$ so that
$[w^{1+\epsilon}]_{A_2} \leq 4[w]_{A_2}$.  (See
Theorem~\ref{thm:rh-ineq}.)   Hence, for all $f\in C_c^\infty$, 
\begin{equation} \label{eqn:wtd}
 \|T_j f\|_{L^2(w^{1+\epsilon})} \leq C \|f\|_{L^2(w^{1+\epsilon})},
\end{equation}
and the constant is independent of $\epsilon$.
Therefore, by the interpolation with change of measure theorem due to
Stein and Weiss~\cite{stein-weiss58} (see also~\cite{MR0482275}) we
can interpolate between~\eqref{eqn:unwtd} and~\eqref{eqn:wtd} to get
\[ \|T_j f\|_{L^2(w)} \leq C2^{-\frac{\alpha\epsilon}{1+\epsilon}|j|} \|f\|_{L^2(w)}. \]
Hence, if we combine this with~\eqref{eqn:Rdf-JD}, we have that for all $w\in A_2$ and $f\in
C_c^\infty$,
\[ \|T_\Omega  f\|_{L^2(w)} \leq C \|f\|_{L^2(w)}, \]
which completes the proof.
\end{proof}

We want to highlight one feature of this proof. 
 The use of extrapolation to reduce the problem to proving $L^2$
estimates makes it possible to more easily prove various square
function and Littlewood-Paley estimates.  For an application of
this approach to multiplier theory and Kato-Ponce inequalities, see~\cite{CruzUribe:2016wv}.  For
an application in a somewhat different context, see Fefferman and
Pipher~\cite{fefferman-pipher97}.    

Further, by reducing the problem to $L^2$, the argument using
interpolation with change of measure allows unweighted inequalities
derived using Fourier transform estimates to be ``imported'' into
weighted $L^2(w)$, overcoming the fact that there are no useful
weighted estimates for the Fourier transform.  For another application
of this technique in the study of degenerate elliptic PDEs and
the Kato problem, see~\cite{cruz-riosP}.

\section{Sharp constant extrapolation}
\label{section:variations}

In this section we consider the problem of the sharp constant, in terms of the
$A_p$ constant, in Rubio de Francia extrapolation.  Suppose that we
know that for some $p_0$, $1\leq p_0<\infty$, and family of
extrapolation pairs $\F$, there exists a function $N_{p_0}$ such that
for every $w_0 \in A_{p_0}$, 
\[ \|f\|_{L^{p_0}(w_0)} \leq
  N_{p_0}([w]_{A_{p_0}})\|g\|_{L^{p_0}(w_0)}, \qquad (f,g) \in \F.  \]
Then for $1<p<\infty$ the problem is to find the optimal function  $N_p$ such
that for all $w\in A_p$,
\[ \|f\|_{L^{p}(w)} \leq
  N_{p}([w]_{A_{p}})\|g\|_{L^{p}(w)}, \qquad (f,g) \in \F.  \]
A close examination of the proof of
Theorem~\ref{thm:rdf-extrapolation} shows that we get
\begin{equation} \label{eqn:bad-exp}
 N_p ([w]_{A_{p}}) = c_1 N_{p_0}( c_2
  [w]_{A_p}^{1+\frac{p_0-1}{p-1}}), 
\end{equation}
where $c_1,\,c_2>0$ depend on $n,\,p,\,p_0$.    However, this can be
improved.

\begin{theorem} \label{thm:rdf-sharp}
Given $1\leq p_0<\infty$ and a family of extrapolation pairs $\F$,
suppose that for every $w_0\in A_{p_0}$,
\[ \|f\|_{L^{p_0}(w_0)} \leq
  N_{p_0}([w]_{A_{p_0}})\|g\|_{L^{p_0}(w_0)}, \qquad (f,g) \in \F.  \]
Then for  every $1<p<\infty$ and every $w\in A_p$, 
\[ \|f\|_{L^{p}(w)} \leq
  N_{p}([w]_{A_{p}})\|g\|_{L^{p}(w)}, \qquad (f,g) \in \F,  \]
where
\[ N_p ([w]_{A_{p}}) \leq C(p,p_0) N_{p_0}\big( C(n,p,p_0)
  [w]_{A_p}^{\max(1,\frac{p_0-1}{p-1})}\big). \]
\end{theorem}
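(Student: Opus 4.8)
The plan is to revisit the proof of Theorem~\ref{thm:rdf-extrapolation} and track every appearance of the $A_p$ constant $[w]_{A_p}$, while improving the two steps that cost us the most: the construction of the $A_{p_0}$ weight $w_0$ via reverse factorization, and the accounting of the H\"older/duality factor $I_2$. The key observation is that the wasteful exponent $1+\frac{p_0-1}{p-1}$ in~\eqref{eqn:bad-exp} came from dualising all the way down to $L^1$ and then splitting with a single H\"older inequality regardless of whether $p_0<p$ or $p_0>p$. To get the sharp exponent $\max(1,\frac{p_0-1}{p-1})$ one must split into these two cases, as in the original proofs of extrapolation, rather than using the unified argument.

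First I would treat the case $p_0 \le p$. Here one uses the iteration algorithm $\Rh_1$ built on $M$ acting on $L^{(p/p_0)'}(w^{1-(p/p_0)'})$ or, more cleanly, one builds a single weight $w_0 = (\Rh h)^{p_0-p}w$ where $\Rh$ is the Rubio de Francia operator adapted to the exponent $r = (p/p_0)' > 1$ and its dual weight. By Theorem~\ref{thm:rdf-algorithm}, $[\Rh h]_{A_1} \le 2\|M\|_{L^r(\cdot)}$, and since $\|M\|_{L^r(v)} \le C(n,r)[v]_{A_r}^{1/(r-1)} = C[v]_{A_r}^{p/p_0 - 1}$, one checks that $w_0 = (\Rh h)^{p_0-p}w \in A_{p_0}$ with $[w_0]_{A_{p_0}} \lesssim [w]_{A_p}$ — no extra exponent, because $p_0 - p \le 0$ means we are multiplying by an $RH_\infty$-type factor (via Lemma~\ref{lemma:A1-neg-power}) rather than an $A_1$ factor. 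Applying the hypothesis with $N_{p_0}$ and then H\"older with exponents $p/p_0$ and $(p/p_0)'$ (the boundedness of $\Rh$ on $L^r$ being used to control the conjugate factor) yields $N_p([w]_{A_p}) \le C N_{p_0}(C[w]_{A_p})$, i.e. exponent $1 = \max(1,\frac{p_0-1}{p-1})$ since $\frac{p_0-1}{p-1}\le 1$ in this case.

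Next I would treat $p_0 > p$. By symmetry of $A_p$ under $w \mapsto w^{1-p'}$ — and of the whole extrapolation problem under passing to conjugate families — one would like to reduce to the previous case. Concretely, dualise: $\|f\|_{L^p(w)}$ is realised against an $h \in L^{p'}(w)$, and one runs the iteration algorithm on $\sigma = w^{1-p'}$ at the exponent $p_0'/p' < 1$... but since that exponent is below $1$ one instead iterates at $(p'/p_0')' > 1$. The weight produced is $w_0^{1-p_0'} = (\Rh h)^{p_0'-p'}\sigma$, and tracking the $A_{p_0'}$ constant via $\|M\|_{L^{(p'/p_0')'}(\cdot)} \le C[\cdot]_{A_{(p'/p_0')'}}^{p'/p_0'-1}$ together with the relation $[\sigma]_{A_{p'}} = [w]_{A_p}^{p'-1}$ gives $[w_0]_{A_{p_0}} = [w_0^{1-p_0'}]_{A_{p_0'}}^{p_0-1} \lesssim [w]_{A_p}^{(p_0-1)/(p-1)}$, which is exactly the exponent $\max(1,\frac{p_0-1}{p-1})$ when $p_0 > p$. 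Then the hypothesis and a final H\"older inequality close the estimate.

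The main obstacle I expect is bookkeeping discipline in the second case: one has to be careful that the norm of $M$ on the relevant weighted space is controlled by the \emph{correct} power of $[w]_{A_p}$ (using Theorem~\ref{thm:Ap-max}, which gives $\|M\|_{L^p(w)}\lesssim [w]_{A_p}^{p'}$, hence $\|M\|_{L^p(w)}\lesssim [w]_{A_p}^{1/(p-1)}$ after accounting for the exponent convention), and that the exponents passed into $N_{p_0}$ combine to give precisely $[w]_{A_p}^{\max(1,(p_0-1)/(p-1))}$ and not something worse. A secondary subtlety is the $p_0 = 1$ endpoint, where the dual iteration algorithm degenerates; this is handled by the usual minor modification (one only needs the single algorithm $\Rh_1$ and the $A_1$ property directly), and I would remark on it rather than writing it out in full. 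The structural point — split into $p_0 \le p$ and $p_0 > p$, and in each case feed the minimal power of $[w]_{A_p}$ into $N_{p_0}$ — is what upgrades~\eqref{eqn:bad-exp} to the sharp bound.
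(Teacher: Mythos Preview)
Your overall plan---split into $p_0\le p$ and $p_0>p$, run only \emph{one} Rubio de Francia algorithm in each case, and feed the sharp maximal bound $\|M\|_{L^p(w)}\lesssim [w]_{A_p}^{1/(p-1)}$ into the constant tracking---is precisely the structural point the paper makes. On that level your proposal and the paper agree.

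There is, however, a genuine gap in your execution of the case $p_0\le p$. You build $w_0=(\Rh h)^{p_0-p}w$ and claim $[w_0]_{A_{p_0}}\lesssim[w]_{A_p}$ on the grounds that $(\Rh h)^{p_0-p}$ is ``an $RH_\infty$-type factor'' via Lemma~\ref{lemma:A1-neg-power}. That lemma does tell you $(\Rh h)^{p_0-p}\in RH_\infty$, but there is no product rule $RH_\infty\cdot A_p\subset A_{p_0}$, and no quantitative $A_{p_0}$ bound follows from it; this step fails as written. The paper's sketch for $p>p_0$ is different in both sign and mechanism: one \emph{avoids duality}, takes $h_1$ with $f\le\Rh_1 h_1$ where $\Rh_1$ is the iteration on $L^p(w)$ itself (not on $L^{(p/p_0)'}$), and uses that pointwise domination to write
\[
\|f\|_{L^p(w)}^{p_0}\le\int_{\R^n} f^{p_0}(\Rh_1 h_1)^{p-p_0}\,w\,dx
\]
with a \emph{positive} power of $\Rh_1 h_1$, then continues from there. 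In the other regime $p<p_0$ the paper applies H\"older's inequality first and then uses only $\Rh_2$ on $L^{p'}(\sigma)$; your variant dualises and iterates at the exponent $(p'/p_0')'$, which produces a weight involving $w^{(p_0-1)/(p-1)}$ rather than $w$, so the return to $L^p(w)$ via ``a final H\"older inequality'' is not clear from what you wrote and would need to be justified.
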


As we will see below, this is the optimal result, since it yields
sharp inequalities for singular integrals and other operators.    For
a complete proof, see~\cite{duoandikoetxeaP}
or~\cite[Theorem~3.22]{MR2797562}.  Here we will restrict ourselves to
giving an idea of why
the proof of Theorem~\ref{thm:rdf-extrapolation} does not yield the
best constant, and how the proof has to be modified to achieve this.

One of the main features of the proof of
Theorem~\ref{thm:rdf-extrapolation} that distinguishes it from
previous proofs is that it only required a single case.  However, as a
consequence we have to use both iteration algorithms $\Rh_1$ and
$\Rh_2$.  Each one contributes a power of the $A_p$ constant of $w$,
so we get the sum $1+\frac{p_0-1}{p-1}$ in the exponent
in~\eqref{eqn:bad-exp}. 

To avoid this, we need to modify the proof and treat two cases.  If
$p<p_0$, then
we can   apply H\"older's inequality
immediately and then argue only using the iteration algorithm
$\Rh_2$.  This yields the exponent $\frac{p_0-1}{p-1}$.  On the other hand, if $p>p_0$,  then, instead of using
duality, we can fix $h_1$ so that $f\leq \Rh_1 h_1$ and write
\[ \int_\subRn f^p w\,dx = \int_\subRn f^{p_0} (\Rh_1
  h_1)^{-(p_0-p)}w\,dx.  \]
We can now modify the previous proof; this yields the exponent $1$.
In both cases we make use of the sharp constant in the weighted norm
inequalities for the maximal operator from Theorem~\ref{thm:Ap-max}. 

\bigskip

An interesting open question is to determine a sharp constant version
of Corollary~\ref{cor:Ainfty-rdf}, $A_\infty$ extrapolation.  The
precise constant may depend on which of the equivalent definitions of
$A_\infty$ is used.
\bigskip

We now want to consider two examples where the sharp constant, in
terms of the $[w]_{A_p}$ constant, matters.  The first is not a direct
application of Theorem~\ref{thm:rdf-sharp}, but it uses some
of the same ideas.   

\begin{prop}  \label{prop:sharp-p}
Let $T$ be an operator such that for some $p_0$, $1\leq p_0<\infty$,
and every $w_0\in A_{p_0}$, 
\[ \|Tf\|_{L^{p_0}(w_0)} \leq C[w_0]_{A_{p_0}}^{\alpha} \|f\|_{L^{p_0}(w_0)}.  \]
Then as $p\rightarrow \infty$, 
\[ \|Tf\|_p \leq Cp^\alpha \|f\|_p. \]
\end{prop}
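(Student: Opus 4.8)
The plan is to run the same machinery used in the proof of Theorem~\ref{thm:rdf-sharp}, but in the regime $p > p_0$, and then simply unwind what happens when we feed in the trivial weight $w = 1$ together with a near-extremal weight built by the iteration algorithm. First I would fix $p$ large (certainly $p > p_0$) and set $\sigma = 1$, so that the Rubio de Francia algorithm $\Rh_2$ is built from iterates of $M$ on the unweighted space $L^{p'}$. The key quantitative input is the sharp bound from Theorem~\ref{thm:Ap-max}: $\|M\|_{L^{p'}} \le C(n) (p')' = C(n)\, p$, since $(p')' = p$. Consequently, by Theorem~\ref{thm:rdf-algorithm} applied with $S = M$ on $L^{p'}$, for any non-negative $h_2 \in L^{p'}$ the weight $\Rh_2 h_2$ satisfies $[\Rh_2 h_2]_{A_1} \le 2\|M\|_{L^{p'}} \le C(n) p$, while $\|\Rh_2 h_2\|_{L^{p'}} \le 2\|h_2\|_{L^{p'}}$.

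Next I would carry out the duality step. Fix $f$ with $\|f\|_p < \infty$ and pick a non-negative $h_2 \in L^{p'}$ with $\|h_2\|_{p'} = 1$ and $\|Tf\|_p = \int_\subRn |Tf|\, h_2\, dx$. Since $p > p_0$ I do not need the majorant $\Rh_1 h_1$ on the $f$-side in a way that costs a power; instead I split $h_2 = (\Rh_2 h_2)^{1 - 1/p_0'} \cdot (\Rh_2 h_2)^{-(1 - 1/p_0')} h_2$ — more precisely, following the $p > p_0$ case sketched after Theorem~\ref{thm:rdf-sharp}, I write $\int |Tf| h_2 \le \big(\int |Tf|^{p_0}\, w_0\big)^{1/p_0}\big(\int (\Rh_2 h_2)\, w_0^{1 - p_0'/p_0} \cdots\big)$, choosing the test weight $w_0 = (\Rh_2 h_2)^{?}$ built so that $w_0 \in A_{p_0}$ with $[w_0]_{A_{p_0}} \le [\Rh_2 h_2]_{A_1}^{p_0 - 1} \le (C(n) p)^{p_0 - 1}$ (reverse factorization, the easy half of Theorem~\ref{thm:jones}). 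Applying the hypothesis $\|Tf\|_{L^{p_0}(w_0)} \le C[w_0]_{A_{p_0}}^\alpha \|f\|_{L^{p_0}(w_0)}$ then contributes a factor $C\big((C(n) p)^{p_0-1}\big)^\alpha \sim C(n,p_0,\alpha)\, p^{(p_0-1)\alpha}$, and the remaining H\"older factors involving $\Rh_2 h_2$ are controlled by $\|\Rh_2 h_2\|_{p'}\|h_2\|_{p'} \lesssim 1$ with constants independent of $p$ (or at worst polynomially bounded, which I would absorb). Tracking that the $L^{p_0}(w_0)$ norms of $f$ collapse back to $\|f\|_p$ via the same H\"older bookkeeping, one arrives at $\|Tf\|_p \lesssim p^{(p_0-1)\alpha}\|f\|_p$.

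There is a mismatch to reconcile: the statement claims $\|Tf\|_p \lesssim p^\alpha \|f\|_p$, whereas the naive count gives $p^{(p_0-1)\alpha}$. The resolution — and I expect this to be the main obstacle — is that when $p_0 = 1$ the exponent $(p_0 - 1)\alpha = 0$, which is too weak, so one cannot literally use the $A_{p_0}$ bound with $w_0 \in A_1$; instead, when $p_0 = 1$ the single iteration algorithm already produces an $A_1$ weight and the cost is precisely one power of $\|M\|_{L^{p'}} \sim p$, giving exactly $p^\alpha$. For general $p_0 \ge 1$ the correct accounting uses only \emph{one} algorithm (the $p > p_0$ case genuinely needs only $\Rh_2$, as emphasized in the discussion after Theorem~\ref{thm:rdf-sharp}), and the weight it produces can be taken in $A_{p_0}$ with $[w_0]_{A_{p_0}}$ bounded by a \emph{single} power of $\|M\|_{L^{p'}}$ times a constant — not the $(p_0-1)$-th power — because the reverse-factorization exponent is offset by the freedom in how the $A_1$ weight enters $w_0 = w_1 w_2^{1-p_0}$ with $w_1 = 1$. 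Carefully choosing this so that $[w_0]_{A_{p_0}} \le C(n,p_0)\,\|M\|_{L^{p'}} \le C(n,p_0)\, p$, the hypothesis contributes $p^\alpha$ and nothing more, and letting $p \to \infty$ the implicit constants $C(n,p_0,\alpha)$ stay fixed. Hence $\|Tf\|_p \le C p^\alpha \|f\|_p$ as $p \to \infty$, which is the assertion. The delicate point throughout is verifying that every H\"older exponent and every norm of $\Rh_2 h_2$ is controlled with a constant that is at most polynomially bounded in $p$ of degree strictly less than $\alpha$ — otherwise the clean $p^\alpha$ would degrade — and that is exactly the bookkeeping the proof of Theorem~\ref{thm:rdf-sharp} is designed to make transparent.
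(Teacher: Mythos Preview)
Your overall strategy --- use a single Rubio de Francia algorithm to manufacture an $A_{p_0}$ weight whose constant grows linearly in $p$, then feed it into the hypothesis --- is the right one, and it is what the paper does. But you dualize at the wrong level, and that is what produces the mismatch you cannot resolve.

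You pair $|Tf|$ against $h_2 \in L^{p'}$ and then try to H\"older your way into an $L^{p_0}(w_0)$ integral. This forces you to choose $w_0$ as some power of $\Rh_2 h_2$, and the obvious choices fail. Taking $w_1 = 1$, $w_2 = \Rh_2 h_2$ in reverse factorization gives $w_0 = (\Rh_2 h_2)^{1-p_0}$ with $[w_0]_{A_{p_0}} \lesssim [\Rh_2 h_2]_{A_1}^{p_0-1} \sim p^{p_0-1}$, exactly the bound you wanted to avoid. Taking instead $w_0 = \Rh_2 h_2$ gives the correct constant $[w_0]_{A_{p_0}} \leq [\Rh_2 h_2]_{A_1} \lesssim p$, but then the second H\"older factor becomes $\big(\int \Rh_2 h_2\,dx\big)^{1/p_0'}$, and $\Rh_2 h_2$ lies only in $L^{p'}$, not $L^1$. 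Your paragraph about ``the freedom in how the $A_1$ weight enters'' does not supply a choice that satisfies both constraints simultaneously; it is precisely this bookkeeping that is missing. (Incidentally, calling the $p_0 = 1$ outcome $p^{(p_0-1)\alpha} = p^0$ ``too weak'' is backwards --- that would be a \emph{stronger} bound; the actual issue is that your construction degenerates there.)

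The paper sidesteps all of this by dualizing one level higher. Write
\[ \|Tf\|_p^{p_0} = \big\||Tf|^{p_0}\big\|_{p/p_0} = \int_\subRn |Tf|^{p_0} h\,dx \]
for some non-negative $h$ with $\|h\|_{(p/p_0)'} = 1$, and build the iteration algorithm $\Rh$ on $L^{(p/p_0)'}$ rather than on $L^{p'}$. Then $\Rh h$ is \emph{itself} the weight: it lies in $A_1 \subset A_{p_0}$ with
\[ [\Rh h]_{A_{p_0}} \leq [\Rh h]_{A_1} \leq 2\|M\|_{(p/p_0)'} \leq C(n,p_0)\,p, \]
since $((p/p_0)')' = p/p_0$. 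The hypothesis applied with $w_0 = \Rh h$ gives $\int |Tf|^{p_0}\Rh h \lesssim p^{\alpha p_0}\int |f|^{p_0}\Rh h$, and a single H\"older step returns $\int |f|^{p_0}\Rh h \leq \|f\|_p^{p_0}\|\Rh h\|_{(p/p_0)'} \leq 2\|f\|_p^{p_0}$. Taking $p_0$-th roots yields $\|Tf\|_p \lesssim p^\alpha \|f\|_p$ --- no reverse factorization, no fractional power of the weight, no uncontrolled integral.
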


\begin{proof}
Our proof uses the Rubio de Francia iteration algorithm and is, in
some sense, a special case of the proof of
Theorem~\ref{thm:rdf-sharp}.   Fix $p>p_0$ and define the iteration
algorithm
\[ \Rh h = \sum_{k=0}^\infty \frac{M^k h}{2^k \|M\|_{(p/p_0)'}^k}. \]
By the standard proofs of the boundedness of the maximal operator
(using Marcin\-kiewicz interpolation), 
$\|M\|_{(p/p_0)'} = C(n,p_0)p$.  Therefore, by
Theorem~\ref{thm:rdf-algorithm},
\[ [\Rh h]_{A_{p_0}} \leq [\Rh h]_{A_1} \leq 2 \|M\|_{(p/p_0)'} =
  C(n,p_0)p. \]
We can now argue as follows: by duality  there exists $h\in L^{(p/p_0)'}$,
$\|h\|_{(p/p_0)'}=1$, such that 
\begin{align*}
 \|Tf\|_p^{p_0}
&  = \int_\subRn |Tf|^{p_0} h\,dx; \\
\intertext{by the majorant property of $\Rh$, our hypothesis,
  H\"older's inequality and the boundedness of $\Rh$ on $L^{(p/p_0)'}$,}
& \leq \int_\subRn |Tf|^{p_0} \Rh h\,dx \\
& \leq C(n,p_0) p^{\alpha p_0} \int_\subRn |f|^{p_0} \Rh h\,dx \\
& \leq C(n,p_0) p^{\alpha p_0}\|f\|_p^{p_0}\|\Rh h\|_{(p/p_0)'} \\
& \leq C(n,p_0) p^{\alpha p_0}\|f\|_p^{p_0}. 
\end{align*}
\end{proof}

Proposition~\ref{prop:sharp-p} is implicit in Fefferman and
Pipher~\cite{fefferman-pipher97} who used it to get estimates for
multiparameter singular integrals.  In~\cite{dcu-martell-perez}
this argument was used to show that the exponent
$\alpha$ obtained for the weighted norm inequality for the dyadic
square function was the best possible.   Luque, P\'erez and
Rela~\cite{MR3342184} developed this idea further to show the general
relationship between the best exponent in the weighted inequalities
and the behavior of the constant in the unweighted inequality as
$p\rightarrow 1$ or $p\rightarrow\infty$.  

\medskip

A much deeper application of the optimal constant in extrapolation
comes from the study of the Beltrami equation in the plane.  Given a
bounded, open set $\Omega\subset \Cc$,  a
map $f : \Omega \rightarrow \Cc$ is a weakly $K$-quasiregular map if
$f\in W^{1,q}_{\loc}(\Omega)$, $1\leq q \leq 2$, and $f$ is a solution of
the Beltrami equation, 
\[ \partial_{\bar{z}} f(z) = \mu(z) \partial_z f(z), \qquad
\text{a.e. }z\in \Omega, \]
where $\mu$ is a bounded, complex-valued
function such that
\[ \|\mu\|_\infty \leq k = \frac{K-1}{K+1} <1.  \]
If $f$ is also continuous, then we say that it is $K$-quasiregular.
If $f\in W_{\loc}^{1,1+k+\epsilon}(\Omega)$, $\epsilon>0$, then it was
shown that $f$ is continuous; if $f\in
W_{\loc}^{1,1+k-\epsilon}(\Omega)$, then there are examples of weakly
$K$-quasiregular maps that are not $K$-quasiregular
(see~\cite{astala-iwaniec-saksman91}).  In the critical exponent case,
that is, when $f\in W_{\loc}^{1,1+k}(\Omega)$,  Astala, Iwaniec
and Saksman~\cite{astala-iwaniec-saksman91} showed that $f$ is continuous
if the Beurling-Ahlfors transform,
\[ Tf(z)  = \frac{1}{\pi}\int_\Cc \frac{f(w)}{(w-z)^2}dA(w), \]
satisfies a quantitative weighted norm inequality:  for every $p\geq
2$ there exists $C>0$ such that for every $w\in A_p$, 
\begin{equation} \label{eqn:BA}
 \|Tf\|_{L^p(w)} \leq C[w]_{A_p} \|f\|_{L^p(w)}.  
\end{equation}

The Beurling-Ahlfors transform is a two-dimensional Calder\'on-Zygmund
singular integral operator.   The original proofs of weighted norm
inequalities for singular integrals did not give quantitative bounds
in terms of the $A_p$ constant:  later, a close examination of the proofs
showed that the constant was on the order of $\exp(c[w]_{A_p})$.  
Buckley~\cite{buckley93} proved that for all $1<p<\infty$
and any singular integral $T$,
\[ \|Tf\|_{L^p(w)} \leq C[w]_{A_p}^{1+\frac{1}{p-1}} \|f\|_{L^p(w)};  \]
he also gave examples to show that in general, the smallest possible
exponent was $\max(1,\frac{1}{p-1})$.    

By Theorem~\ref{thm:rdf-sharp}, to prove that this is the sharp
exponent, and, in particular,  to prove \eqref{eqn:BA} for the
Beurling-Ahlfors transform, it suffices to prove that for $p=2$ and
$w\in A_2$,
\[ \|Tf\|_{L^2(w)} \leq C[w]_{A_2}\|f\|_{L^2(w)}.  \]
Because of this, the sharp constant problem for singular integrals
became known as the $A_2$ conjecture.

For the Beurling-Ahlfors transform, this conjecture was proved by Petermichl and
Volberg~\cite{petermichl-volberg02} using a Bellman function argument.
Petermichl then extended these techniques to prove it for the Hilbert
transform~\cite{MR2354322} and  the Riesz
transforms~\cite{petermichl08}.  A number of partial results were
obtained for more general singular integrals:  see, for
instance~\cite{dcu-martell-perez} and the references it contains.
The problem was finally solved in full generality by
Hyt\"onen~\cite{hytonenP2010}.  In all of these arguments
extrapolation played a central role in reducing to the case $p=2$.

The sparse domination inequality~\eqref{eqn:sparse-domination} was developed to simplify
the original argument of Hyt\"onen; here we give this proof.  

\begin{theorem} \label{thm:A2-conj}
Given a Calder\'on-Zygmund singular integral operator $T$, for every
$1<p<\infty$ and every $w\in A_p$, 
\[ \|Tf\|_{L^p(w)} \leq
  C[w]_{A_p}^{\max(1,\frac{1}{p-1})}\|f\|_{L^p(w)}. \]
\end{theorem}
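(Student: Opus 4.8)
The strategy is to combine two tools already available in the excerpt: the sparse domination inequality~\eqref{eqn:sparse-domination}, which reduces the problem to bounding a single sparse operator $T_\Ss$ on $L^p(w)$, and sharp-constant extrapolation (Theorem~\ref{thm:rdf-sharp}), which reduces matters to the single endpoint $p=2$. So the first step is to invoke Theorem~\ref{thm:rdf-sharp} with $p_0 = 2$ and $\alpha = 1$: it suffices to prove that for every $w \in A_2$,
\[
\|Tf\|_{L^2(w)} \leq C[w]_{A_2}\|f\|_{L^2(w)},
\]
since then $N_p([w]_{A_p}) \leq C(p)\,[w]_{A_p}^{\max(1,1/(p-1))}$ follows automatically from the extrapolation machinery, the exponent $\max(1,\frac{p_0-1}{p-1})$ becoming $\max(1,\frac{1}{p-1})$. (Strictly speaking one applies extrapolation to the family $\F = \{(|Tf|,|f|) : f \in L^\infty_c\}$ and then passes to all of $L^p(w)$ by the standard density argument discussed in Section~\ref{section:examples}.)

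\textbf{Reducing to a sparse operator.} By~\eqref{eqn:sparse-domination}, for $f \in L^\infty_c$ we have $|Tf| \lesssim \sum_{t} T_{\Ss_t}(|f|)$ with each $\Ss_t \subset \D^t$ sparse, and the sum is over the finitely many $t \in \{0,\pm1/3\}^n$. Since there are only $3^n$ terms, it suffices to establish the bound
\[
\|T_\Ss g\|_{L^2(w)} \leq C[w]_{A_2}\|g\|_{L^2(w)}
\]
for a single sparse family $\Ss$ (in some grid $\D^t$), uniformly in $\Ss$ and $t$, with $g \geq 0$. Because the translation by $t$ does not affect any dyadic property, we may as well argue with the standard grid, replacing $M^d$, $M^d_\sigma$ etc.\ by their $\D^t$-analogues whenever needed.

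\textbf{The sparse $L^2(w)$ bound.} Set $\sigma = w^{-1}$, which lies in $A_2$ with $[\sigma]_{A_2} = [w]_{A_2}$. The cleanest route is duality: test $\int T_\Ss g \cdot h\, w\,dx$ against $h \geq 0$ with $\|h\|_{L^2(w)} = 1$, writing $g = \varphi \sigma$ (so $\|g\|_{L^2(w)} = \|\varphi\|_{L^2(\sigma)}$). One expands
\[
\int_{\subRn} T_\Ss(\varphi\sigma)\, h\, w\,dx
= \sum_{Q \in \Ss} |Q|\Big(\avgint_Q \varphi\,d\sigma\Big)\Big(\avgint_Q h\,dw\Big)\,\frac{\sigma(Q)}{|Q|}\cdot\frac{w(Q)}{|Q|}\cdot\frac{|Q|^2}{|Q|},
\]
and the product $\frac{\sigma(Q)}{|Q|}\cdot\frac{w(Q)}{|Q|} \leq [w]_{A_2}$ by the $A_2$ condition. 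This leaves, after replacing $\sigma(Q)$ by $2\sigma(E_Q)$ and $w(Q)$ by $2w(E_Q)$ using sparseness, an expression controlled by
\[
[w]_{A_2}\sum_{Q \in \Ss}\Big(\avgint_Q \varphi\,d\sigma\Big)\sigma(E_Q)^{1/2}w(E_Q)^{1/2}\Big(\avgint_Q h\,dw\Big),
\]
which by Cauchy--Schwarz in the sum splits into $[w]_{A_2}$ times
\[
\Big(\sum_Q \big(M^{\D^t}_\sigma \varphi\big)^2_{Q}\,\sigma(E_Q)\Big)^{1/2}\Big(\sum_Q \big(M^{\D^t}_w h\big)^2_{Q}\,w(E_Q)\Big)^{1/2}
\lesssim [w]_{A_2}\,\|M^{\D^t}_\sigma\varphi\|_{L^2(\sigma)}\|M^{\D^t}_w h\|_{L^2(w)},
\]
where I used that the $E_Q$ are disjoint and that the averages over $Q$ are dominated by the corresponding maximal functions on $E_Q$. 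Now Lemma~\ref{lemma:wtd-max-op} (for the measures $d\sigma$ and $dw$, with $p=2$) gives $\|M^{\D^t}_\sigma\varphi\|_{L^2(\sigma)} \lesssim \|\varphi\|_{L^2(\sigma)} = \|g\|_{L^2(w)}$ and $\|M^{\D^t}_w h\|_{L^2(w)} \lesssim \|h\|_{L^2(w)} = 1$. Taking the supremum over $h$ yields $\|T_\Ss g\|_{L^2(w)} \lesssim [w]_{A_2}\|g\|_{L^2(w)}$, and summing the $3^n$ pieces and applying Theorem~\ref{thm:rdf-sharp} finishes the proof.

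\textbf{Main obstacle.} The delicate point is the bookkeeping in the sparse estimate: one must arrange the splitting of the weight product so that exactly one factor of $[w]_{A_2}$ is extracted (not two), which is what forces the linear-in-$[w]_{A_2}$ bound at $p=2$ and hence, via Theorem~\ref{thm:rdf-sharp}, the sharp exponent $\max(1,\frac1{p-1})$ for all $p$. Using a cruder pairing --- for instance bounding by $M^{\D^t}(f\sigma)$ directly and citing the unweighted sparse-$M$ inequality --- would lose a factor and give $[w]_{A_2}^{3/2}$ or worse. The rest (sparse domination, extrapolation, density) is entirely off-the-shelf from the earlier sections.
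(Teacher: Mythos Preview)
Your proposal is correct and follows essentially the same route as the paper: reduce to $p=2$ via sharp extrapolation (Theorem~\ref{thm:rdf-sharp}), invoke sparse domination~\eqref{eqn:sparse-domination}, and then bound a single sparse operator on $L^2(w)$ by duality, extracting one factor of $[w]_{A_2}$ from the product of averages $\frac{w(Q)}{|Q|}\cdot\frac{\sigma(Q)}{|Q|}$ and finishing with the weighted dyadic maximal bounds of Lemma~\ref{lemma:wtd-max-op}. The only differences are cosmetic: the paper dualizes with $h\in L^2(\sigma)$ and the pairing $\int T_\Ss f\cdot h\,dx$, then applies H\"older pointwise on the integral $\int M_\sigma^{\D^t}(fw)\,\sigma^{1/2}\,M_w^{\D^t}(h\sigma)\,w^{1/2}\,dx$, whereas you dualize in $L^2(w)$ and apply Cauchy--Schwarz on the sum over $Q$; these are equivalent.

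Two small bookkeeping slips to clean up. First, the factor $\frac{|Q|^2}{|Q|}$ in your displayed expansion is spurious: the correct identity is $\int T_\Ss(\varphi\sigma)\,hw\,dx=\sum_Q |Q|\big(\avgint_Q\varphi\,d\sigma\big)\big(\avgint_Q h\,dw\big)\frac{\sigma(Q)}{|Q|}\frac{w(Q)}{|Q|}$, with no extra $|Q|$. Second, after you have used $\frac{\sigma(Q)}{|Q|}\frac{w(Q)}{|Q|}\le[w]_{A_2}$, the quantities $\sigma(Q),w(Q)$ are gone; what you actually replace is $|Q|\le 2|E_Q|$, and then $|E_Q|=\int_{E_Q}\sigma^{1/2}w^{1/2}\,dx\le\sigma(E_Q)^{1/2}w(E_Q)^{1/2}$ by Cauchy--Schwarz, which gives the expression you wrote. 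With those corrections the argument is complete.
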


\begin{proof}
  By Theorem~\ref{thm:rdf-sharp} and
  inequality~\eqref{eqn:sparse-domination}, it will suffice to show
  that if $\Ss$ is a sparse subset of some dyadic grid $\D^t$, then
  for all $w\in A_2$ and non-negative $f\in L_c^\infty$,
\[ \|T_\Ss f\|_{L^2(w)} \leq C[w]_{A_2}\|f\|_{L^2(w)}.  \]
To prove this we will use
an argument from~\cite{dcu-martell-perez}. 
Let $\sigma=w^{-1}$.  Then by duality there exists $h\in L^2(\sigma)$,
$\|h\|_{L^2(\sigma)}=1$, such that 
\begin{align*}
\|T_\Ss f\|_{L^2(w)} 
& = \int_\subRn T_\Ss f h\,dx \\
& = \sum_{Q\in \Ss} \avgint_Q f\,dx \avgint_Q h\,dx \, |Q|; \\
\intertext{by the definition of a sparse set and the definition of
  $A_2$,}
& \leq 2\sum_{Q\in \Ss}
\frac{w(Q)}{|Q|}\frac{\sigma(Q)}{|Q|} \,
\avgint_Q fw\,d\sigma \,\avgint_Q h\sigma \,dw \, |E_Q| \\
& \leq 2[w]_{A_2} \sum_Q \int_{E_Q} M^{\D^t}_\sigma(fw)
  M^{\D^t}_w(h\sigma)\,dx \\
&  \leq 2[w]_{A_2}\int_\subRn 
M^{\D^t}_\sigma(fw) \sigma^{1/2}
  M^{\D^t}_w(h\sigma)w^{1/2} \,dx; \\
\intertext{by H\"older's inequality and Lemma~\ref{lemma:wtd-max-op}
  (which holds for general dyadic grids with the same proof),}
&  \leq 2[w]_{A_2} \|M^{\D^t}_\sigma(fw) \|_{L^2(\sigma)}
\|M^{\D^t}_w(h\sigma)\|_{L^2(w)} \\
& \leq C[w]_{A_2} \|f\|_{L^2(w)}\|h\|_{L^2(\sigma)} \\
&  \leq C[w]_{A_2} \|f\|_{L^2(w)}.
\end{align*}
\end{proof}

Finally, we note in passing that it is possible to prove
Theorem~\ref{thm:A2-conj} without using extrapolation.  The $L^2$
estimate for sparse operators can be extended to weighted $L^p$,
though the resulting proof is more complicated.  See
Moen~\cite{moen-2012} for the details.

\section{Restricted range extrapolation}
\label{section:restricted}

In this section we consider a second variation of Rubio de Francia
extrapolation, restricted range extrapolation.
Restricted range extrapolation was first proved by Auscher and
Martell~\cite{auscher-martell07} (and also by
Duoandikoetxea~\cite{duoandikoetxea-moyua-oruetxebarria-seijoP} but
with a
very different perspective).  Auscher and Martell were considering
families of operators associated with certain second order elliptic
PDEs; these PDEs in turn were of
interest because of their connection with the Kato conjecture
(for a history of this problem,
see~\cite{auscher-hofmann-lacey-mcintosh-tchamitchian02} and the
references it contains).  Let $A$ be an $n\times n$ matrix of
measurable, complex valued functions that for some
$0<\lambda<\Lambda<\infty$ satisfies the ellipticity
conditions
\[ 
\lambda |\xi|^2  \leq \re \langle A\xi,\xi\rangle, \qquad
|\langle A\xi,\nu \rangle| \leq \Lambda |\xi||\nu|,
\qquad \xi,\,\nu \in \Cc^n.
\]
Define the differential operator $Lu=
-\Div A\grad u$.  Then the Kato conjecture states that  for all $u\in
W^{1,2}(\R^n)$ (i.e., $u$ such that $u,\,\grad u \in L^2$), 
\begin{equation} \label{eqn:kato}
 \|L^{1/2} u\|_2 \approx \|\grad u\|_2, 
\end{equation}
where the operator $L^{1/2}$ is defined using the functional
calculus.  We can define (again, via the functional calculus) the
associated Riesz transform $\grad L^{-1/2}$; when $A$ is the identity
matrix, this is just the classical (vector) Riesz transform.  
 It follows
from~\eqref{eqn:kato} that
\[ \|\grad L^{-1/2} u\|_2 \lesssim \|u\|_2.  \]
These operators also satisfy $L^p$ inequalities, $p\neq 2$, but unlike
the classical Riesz transforms, one cannot take $p\in
(1,\infty)$.  Rather, for each operator $L$ there exist $1\leq
p_-<2<p_+ \leq \infty$
such that if $p \in (p_-,p_+)$, then 
\[ \|\grad L^{-1/2} u\|_p \lesssim \|u\|_p.  \]
In certain cases this estimate holds for all $p\in (1,\infty)$, but there exist
operators such that $(p_-,p_+)=(2-\delta,2+\epsilon)$ where
$\epsilon,\,\delta>0$ are small:  see~\cite{auscher07}.

It is natural to ask under what conditions the corresponding weighted
inequalities,
\[ \|\grad L^{-1/2} u\|_{L^p(w)} \lesssim \|u\|_{L^p(w)}, \]
hold.  Auscher and Martell~\cite{auscher-martell06} showed that for
$p_-<p<p_+$, this inequality holds for all weights $w$ such that
$w\in A_{p/p_-}\cap RH_{(p_+/p)'}$, where we interpret $\infty'=1$.
(Note that by Theorem~\ref{thm:gen-factorization} this class is never empty.)
As part of the (lengthy) proof of this inequality, they proved a
restricted range extrapolation theorem.

\begin{theorem} \label{thm:restricted-range}
Given a family of extrapolation pairs $\F$, suppose there exist $1\leq
p_-<p_0<p_+\leq\infty$ such that for every $w_0\in A_{p_0/p_-}\cap
RH_{(p_+/p_0)'}$,
\[ \int_\subRn f^{p_0}w_0\,dx \lesssim \int_\subRn g^{p_0}w_0\,dx,
\qquad (f,g) \in \F. \]
Then for every $p_-<p<p_+$ and every $w\in A_{p/p_-}\cap
RH_{(p_+/p)'}$,
\[ \int_\subRn f^{p}w\,dx \lesssim \int_\subRn g^{p}w\,dx,
\qquad (f,g) \in \F. \]
\end{theorem}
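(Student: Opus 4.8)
The plan is to bootstrap from the unrestricted extrapolation theorem, Theorem~\ref{thm:rdf-extrapolation}, by first removing the parameter $p_-$ and then re-running the proof of that theorem with one of its two iteration operators modified so that the test weight it produces stays inside the restricted class $A_{p_0/p_-}\cap RH_{(p_+/p_0)'}$.

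First I would eliminate $p_-$ by a rescaling. Replace $\F$ by $\F_* = \{(f^{p_-},g^{p_-}) : (f,g)\in\F\}$. Since $\int_\subRn (f^{p_-})^{p_0/p_-}w\,dx = \int_\subRn f^{p_0}w\,dx$, and similarly with $p$ in place of $p_0$, the hypothesis for $\F$ at $p_0$ is exactly the restricted-range hypothesis for $\F_*$ with $p_-,\,p_0,\,p_+$ replaced by $1,\,p_0/p_-,\,p_+/p_-$, and the desired conclusion for $\F$ at $p$ is the conclusion for $\F_*$ at $p/p_-$. So I may assume $p_- = 1$. If in addition $p_+ = \infty$, then $(p_+/p)' = 1$ (with the convention $\infty' = 1$), the reverse H\"older condition is vacuous, and both hypothesis and conclusion coincide with those of Theorem~\ref{thm:rdf-extrapolation}; there is nothing more to do. From now on $1 = p_- < p_0,\,p < p_+ < \infty$, and I write $s = (p_+/p)'$, $s_0 = (p_+/p_0)'$.

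Now I would follow the proof of Theorem~\ref{thm:rdf-extrapolation} line by line, changing only the second iteration operator. Fix $w\in A_p\cap RH_s$, set $\sigma = w^{1-p'}\in A_{p'}$, dualise $\|f\|_{L^p(w)} = \int_\subRn f\,h_2 w\,dx$ with $\|h_2\|_{L^{p'}(w)} = 1$, and build the iteration operators. Keep $\Rh_1$ as the usual iteration of Theorem~\ref{thm:rdf-algorithm} on $L^p(w)$. In place of the second operator $\Rh_2$ of Theorem~\ref{thm:rdf-extrapolation}, use $\widetilde\Rh_2 G := (\Rh_2(G^{s_0}))^{1/s_0}$, where now $\Rh_2$ is an iteration operator of the form in Theorem~\ref{thm:rdf-algorithm} adapted to a suitable weighted Lebesgue space. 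Then $\widetilde\Rh_2 G \ge G$, and $(\widetilde\Rh_2 G)^{s_0} = \Rh_2(G^{s_0}) \in A_1$, so by Lemma~\ref{lemma:A1-RHs} (equivalently by Proposition~\ref{prop:Ap-RHs}) $\widetilde\Rh_2 G \in A_1 \cap RH_{s_0}$. Performing the H\"older splitting exactly as in Theorem~\ref{thm:rdf-extrapolation}, the weight to which the hypothesis must be applied is $w_0 = (\Rh_1 h_1)^{1-p_0}\,\widetilde\Rh_2(h_2 w)$. Writing $q_0 = s_0(p_0-1)+1$, so that $1-q_0 = s_0(1-p_0)$, one has $w_0^{s_0} = (\widetilde\Rh_2(h_2 w))^{s_0}\,(\Rh_1 h_1)^{1-q_0}$, which is the product of an $A_1$ weight and the $(1-q_0)$-power of an $A_1$ weight; by reverse factorization (the easy half of Theorem~\ref{thm:jones}) $w_0^{s_0}\in A_{q_0}$, and hence by Proposition~\ref{prop:Ap-RHs} $w_0\in A_{p_0}\cap RH_{s_0} = A_{p_0/p_-}\cap RH_{(p_+/p_0)'}$. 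So the hypothesis applies to $w_0$, and, combining this with the bound for the ``control'' H\"older factor (handled, just as the term $I_2$ in the proof of Theorem~\ref{thm:rdf-extrapolation}, by the $L^p(w)$- and $L^{p'}(\sigma)$-boundedness of $\Rh_1$ and $\widetilde\Rh_2$), one obtains the inequality at $p$.

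The main obstacle, I expect, is the exponent bookkeeping needed to make $\widetilde\Rh_2$ legitimate: $\widetilde\Rh_2 = (\Rh_2((\cdot)^{s_0}))^{1/s_0}$ is bounded on $L^b(\mu)$ only if the maximal operator is bounded on $L^{b/s_0}(\mu)$, i.e.\ only if $b/s_0 > 1$ and $\mu\in A_{b/s_0}$; with the $b$ and $\mu$ forced on us by the H\"older splitting (above, $b = p'$ and $\mu = \sigma$) this is a genuine restriction on the triple $(p,p_0,p_+)$, and it fails on part of the admissible range. As with sharp-constant extrapolation, the rest of the range is handled by a second case in which one does not dualise: for instance, when $p > p_0$ one fixes $h_1$ with $f \le \Rh_1 h_1$ and writes $\int_\subRn f^p w\,dx = \int_\subRn f^{p_0}(\Rh_1 h_1)^{-(p_0-p)}w\,dx$, while the case $p < p_0$ is dealt with by applying H\"older's inequality directly; in each case the two iteration operators must be re-arranged so that the required maximal-operator bounds hold. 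Carrying out both cases, and checking at every use of boundedness and of Proposition~\ref{prop:Ap-RHs} that the auxiliary weights lie in the $A_q$ class being invoked, is what makes the complete proof lengthy; the only genuinely new ingredient beyond Theorem~\ref{thm:rdf-extrapolation} is the interplay between reverse factorization and Proposition~\ref{prop:Ap-RHs} described above. For the details, see Auscher and Martell~\cite{auscher-martell07} or~\cite{MR2797562}.
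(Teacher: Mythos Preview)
Your sketch is a valid outline and, like the paper, defers the full argument to~\cite{auscher-martell07,MR2797562}. Both correctly identify the essential new ingredient: the iteration operators must be reshaped so that the test weight lands, via Proposition~\ref{prop:Ap-RHs}, in $A_{p_0/p_-}\cap RH_{(p_+/p_0)'}$. The routes differ in execution. You first rescale away $p_-$, keep $\Rh_1$ unchanged on $L^p(w)$, raise only the second operator to an $s_0$-th power, and then use reverse factorization plus Proposition~\ref{prop:Ap-RHs} to place $w_0$ in $A_{p_0}\cap RH_{s_0}$; this is clean, but (as you correctly observe) the boundedness of $\widetilde\Rh_2$ on $L^{p'}(\sigma)$ imposes constraints that fail on part of the range, forcing a case split in the spirit of Section~\ref{section:variations}. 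The paper's heuristic instead keeps $p_-$ general, passes once and for all to the auxiliary weight $u=w^{(p_+/p)'}\in A_\tau$, builds \emph{both} iteration operators on $L^\tau(u)$ and $L^{\tau'}(u^{1-\tau'})$, absorbs the mismatch with $L^p(w)$ through rescaled auxiliaries $H_1=\Rh_1(h_1^\alpha w^\beta)^{1/\alpha}w^{-\beta/\alpha}$ and $H_2=\Rh_2(h_2^\gamma w^\delta)^{1/\gamma}w^{-\delta/\gamma}$, dualizes to $L^s$ for some $1\le s<\min(p,p_0)$ rather than to $L^1$, and appeals to the generalized factorization Theorem~\ref{thm:gen-factorization} directly. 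The many free parameters $\alpha,\beta,\gamma,\delta,\epsilon,s$ are what allow the paper to treat the whole range in a single case---the ``miracle'' is that all the constraints are simultaneously satisfiable---whereas your approach trades that parameter bookkeeping for a case analysis. Either way the heart of the matter is the same interaction of factorization with Proposition~\ref{prop:Ap-RHs}.
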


We will not prove this theorem, as the proof is very long and
technical, and we refer the reader to~\cite[Theorem~3.31]{MR2797562}
for the details. 
Instead, we will describe the heuristic argument that leads to the
proof.  This approach was used to find many of the proofs in~\cite
{MR2797562} but was never made explicit and indeed, the traces were
generally removed.  A detailed explanation of it, in the context of
proving extrapolation in the variable Lebesgue spaces, was given
in~\cite[Section~4]{CruzUribeSFO:2017km}.  

To expand upon the discussion at the beginning of the proof of
Theorem~\ref{thm:rdf-extrapolation}, to prove Theorem~\ref{thm:restricted-range} we have the following at our disposal:
\begin{itemize}
\item The boundedness of the maximal operator on $L^q(w)$ when $w\in
  A_q$.  In this case, however, we will not take $q=p$ and $w\in
  A_p$.  By our hypothesis and Proposition~\ref{prop:Ap-RHs}, we have
  $u=w^{(p_+/p)'} \in A_\tau$, where 
\[ \tau = \left(\frac{p_+}{p}\right)'\left(\frac{p}{p_-}-1\right)+1
= \frac{\frac{1}{p_-}-\frac{1}{p}}{\frac{1}{p}-\frac{1}{p_+}} + 1. \]
Though the final expression looks more complicated, in retrospect
we believe that this is the correct way to write it:  see the
calculations in~\cite{CruzUribe:2017vx}.

\item Using the weights $u$ and $v=u^{1-\tau'}$ we can define Rubio de
  Francia iteration algorithms $\Rh_1$ and $\Rh_2$.   However, these are no longer bounded
  on the space $L^p(w)$ or its dual, so it is necessary to rescale.
  We do this by introducing functions of the form
\[ H_1  = \Rh_1(h_1^\alpha w^\beta)^{\frac{1}{\alpha}}
  w^{-\frac{\beta}{\alpha}}, \quad 
H_2 = \Rh_2(h_2^\gamma w^\delta)^{\frac{1}{\gamma}}
w^{-\frac{\delta}{\gamma}}.  \]

\item Finally, we can use duality, but dualising to $p=1$ may no
  longer work.  Therefore, we fix $1\leq s < \min(p,p_0)$ and dualize
  to $L^s$: for some $h_2\in L^{(p/s)'}(w)$, $\|h_2\|_{
    L^{(p/s)'}(w)}=1$, 
\[ \|f\|_{L^p(w)}^s = \int_\subRn f^s h_2 w\,dx.\]
\end{itemize}

Given these tools, the goal is to follow the proof of
Theorem~\ref{thm:rdf-algorithm}, writing
\[ \int_\subRn f^s h_2 w\,dx \leq 
\int_\subRn f^s H_1^{-\epsilon}H_1^\epsilon H_2w\,dx, \]
applying Holder's inequality, and then using
Theorem~\ref{thm:gen-factorization} to create a weight $W\in  
A_{p_0/p_-}\cap
RH_{(p_+/p_0)'}$.  At each stage this imposes constraints on the
constants $\alpha,\,\beta,\,\gamma,\,\delta$, $\epsilon$ and $s$, and it is the
``miracle'' of extrapolation that these constraints can all be
satisfied simultaneously.  

\bigskip

Very recently, Martell and I were interested in proving a bilinear
version of Theorem~\ref{thm:restricted-range}, with the goal of
proving weighted norm inequalities for the bilinear Hilbert
transform, generalizing a result of Culiuc, di Plinio and
Ou~\cite{Culiuc:2016wr}. (See Section~\ref{section:bilinear} below.)
Using an idea from Duoandikoetxea~\cite{duoandikoetxeaP} we showed
that we could prove the desired bilinear extrapolation theorem if we
could prove an off-diagonal version of
Theorem~\ref{thm:restricted-range}.

An
off-diagonal inequality is an inequality of the form $\|f\|_{L^q(w^q)}\lesssim
\|g\|_{L^p(w^p)}$, $p\neq q$; we write it in this way, with different powers on the
weight on the left and right-hand sides, in order to make the inequality homogeneous in the weight.
Off-diagonal inequalities are natural for operators such as the
fractional integral operator
\[ I_\alpha f(x) = \int_\subRn \frac{f(y)}{|x-y|^{n-\alpha}}\,dy,
  \qquad 0<\alpha <n. \]
Muckenhoupt and Wheeden~\cite{muckenhoupt-wheeden74} proved that for
$1<p<\frac{n}{\alpha}$ and $1<q<\infty$ such that
$\frac{1}{p}-\frac{1}{q}=\frac{\alpha}{n}$, a necessary and sufficient
condition for the inequality
\begin{equation} \label{eqn:MW1}
 \|I_\alpha f\|_{L^q(w^q)} \lesssim \|f\|_{L^p(w^p)}
\end{equation}
is that $w\in A_{p,q}$:
\begin{equation} \label{eqn:Apq-cond}
 [w]_{A_{p,q}} =
\sup_Q \left(\avgint_Q w^q\,dx\right)^{\frac{1}{q}}
\left(\avgint_Q w^{-p'}\,dx\right)^{\frac{1}{p}}<\infty,
\end{equation}
where the supremum is taken over all cubes $Q$.
When $p=q$, this is equivalent to assuming $w^p \in A_p$.  

\medskip

In~\cite{CruzUribe:2017vx} we proved the following limited range, off-diagonal extrapolation theorem.

\begin{theorem} \label{thm:restricted-offdiag}
Given $0\leq p_-<p_+\leq \infty$ and a family of extrapolation pairs
$\F$, suppose that for some $p_0,\,q_0 \in (0,\infty)$ such that
$p_-\leq p_0 \leq p_+$, $\frac{1}{q_0}-\frac{1}{p_0}+\frac{1}{p_+}\geq
0$, and all $w$ such that $w_0^{p_0} \in A_{p_0/p_-}\cap RH_{(p_+/p_0)'}$, 
\begin{equation} \label{eqn:restricted-offdiag1}
\left(\int_\subRn f^{q_0}w_0^{q_0}\,dx\right)^{\frac{1}{q_0}}
\lesssim 
\left(\int_\subRn g^{p_0}w_0^{p_0}\,dx\right)^{\frac{1}{p_0}},
\qquad (f,g) \in \F. 
\end{equation}
Then for every $p,\,q$ such that $p_-<p<p_+$, $0<q<\infty$,
$\frac{1}{p}-\frac{1}{q}=\frac{1}{p_0}-\frac{1}{q_0}$, and every $w$
such that  $w^{p} \in A_{p/p_-}\cap RH_{(p_+/p)'}$, 
\begin{equation} \label{eqn:restricted-offdiag2}
\left(\int_\subRn f^{q}w^{q}\,dx\right)^{\frac{1}{q}}
\lesssim 
\left(\int_\subRn g^{p}w^{p}\,dx\right)^{\frac{1}{p}},
\qquad (f,g) \in \F. 
\end{equation}
\end{theorem}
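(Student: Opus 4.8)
The plan is to fuse two devices already in play: the rescaling that converts an off\nobreakdash-diagonal inequality into a diagonal one, and the restricted-range Rubio de Francia machinery sketched after Theorem~\ref{thm:restricted-range}. Write $\tfrac1r:=\tfrac1{p_0}-\tfrac1{q_0}=\tfrac1p-\tfrac1q$ for the conserved gap; the hypothesis $\tfrac1{q_0}-\tfrac1{p_0}+\tfrac1{p_+}\ge 0$ is exactly the inequality $\tfrac1r\le\tfrac1{p_+}$, and this is what will keep all the auxiliary exponents admissible. As in Section~\ref{section:examples}, by passing to the truncated family $\F_0$ we may assume the left-hand side of \eqref{eqn:restricted-offdiag2} is finite, so fix $p_-<p<p_+$, $0<q<\infty$ with $\tfrac1p-\tfrac1q=\tfrac1r$, a weight $w$ with $w^p\in A_{p/p_-}\cap RH_{(p_+/p)'}$, and a pair $(f,g)\in\F$ with $\|f\|_{L^q(w^q)}<\infty$.

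First I would set up the iteration algorithms on the correct scale. By Proposition~\ref{prop:Ap-RHs}, the condition $w^p\in A_{p/p_-}\cap RH_{(p_+/p)'}$ is equivalent to $u:=w^{\,p(p_+/p)'}\in A_\tau$ with $\tau=(p_+/p)'\bigl(\tfrac{p}{p_-}-1\bigr)+1$; since $p>p_-$ we have $\tau>1$, so by Theorem~\ref{thm:Ap-max} the maximal operator is bounded on $L^\tau(u)$ and on $L^{\tau'}(\sigma)$, $\sigma=u^{1-\tau'}$, and Theorem~\ref{thm:rdf-algorithm} yields iteration operators $\Rh_1,\Rh_2$ with the usual majorant, boundedness, and $A_1$-type properties on those two spaces. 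Because $\Rh_1,\Rh_2$ live on $L^\tau(u)$ and $L^{\tau'}(\sigma)$ rather than on $L^p(w^p)$ and $L^q(w^q)$, I would not apply them to auxiliary functions $h_1,h_2$ directly but work with rescaled functions
\[
 H_1=\Rh_1\!\bigl(h_1^{\alpha}w^{\beta}\bigr)^{1/\alpha}w^{-\beta/\alpha},
 \qquad
 H_2=\Rh_2\!\bigl(h_2^{\gamma}w^{\delta}\bigr)^{1/\gamma}w^{-\delta/\gamma},
\]
with exponents $\alpha,\beta,\gamma,\delta$ chosen so that the arguments of $\Rh_1,\Rh_2$ land in $L^\tau(u)$ and $L^{\tau'}(\sigma)$, while $H_1$ controls $f$ in the $L^q(w^q)$ sense and $H_2$ majorizes the dualizing function.

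The core computation then mimics the proof of Theorem~\ref{thm:rdf-extrapolation}. Fix a suitable exponent $s$ with $0<s<\min(p,q,p_0,q_0)$ and dualize the left side in $L^{q/s}(w^q)$: there is a nonnegative $h_2$ with $\|h_2\|_{L^{(q/s)'}(w^q)}=1$ and
\[
 \|f\|_{L^q(w^q)}^{\,s}=\int_\subRn f^{\,s}\,h_2\,w^q\,dx
 \le \int_\subRn f^{\,s}\,H_1^{-\epsilon}H_1^{\,\epsilon}H_2\,w^q\,dx,
\]
using the majorant property to replace $h_2$ by $H_2$. Applying H\"older's inequality with three conjugate exponents splits the right-hand side into a factor $\bigl(\int_\subRn f^{q_0}W_0\,dx\bigr)^{s/q_0}$, a factor governed by the $L^\tau(u)$-boundedness of $\Rh_1$, and a factor governed by the $L^{\tau'}(\sigma)$-boundedness of $\Rh_2$. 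The weight $W_0$ is assembled from $H_1^{1-\lambda}H_2$ for a suitable $\lambda$, and here Theorem~\ref{thm:gen-factorization}---the reverse-factorization half, which carries the reverse-H\"older information---is used to certify that, writing $W_0=w_0^{q_0}$, one has $w_0^{p_0}\in A_{p_0/p_-}\cap RH_{(p_+/p_0)'}$. Hence the hypothesis \eqref{eqn:restricted-offdiag1} applies and turns $\int_\subRn f^{q_0}W_0\,dx$ into $\bigl(\int_\subRn g^{p_0}w_0^{p_0}\,dx\bigr)^{q_0/p_0}$; re-expanding and using $g\lesssim H_1$ bounds the whole expression by a constant times $\|g\|_{L^p(w^p)}^{\,s}$. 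The a priori finiteness of the intermediate integral, needed before \eqref{eqn:restricted-offdiag1} can legitimately be invoked, follows from $f\lesssim H_1$ and $\|f\|_{L^q(w^q)}<\infty$, exactly as in Theorem~\ref{thm:rdf-extrapolation}.

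I expect the bookkeeping of exponents to be the genuine obstacle. Each step above pins down a linear relation among $\alpha,\beta,\gamma,\delta,\epsilon,s,\lambda$: the two iteration arguments must sit in $L^\tau(u)$ and its dual; the three H\"older exponents must be conjugate; the weight $W_0$ must land \emph{exactly} in $A_{p_0/p_-}\cap RH_{(p_+/p_0)'}$ (this is where $\tau$, and hence $p_-$ and $p_+$, enter); and---new compared with Theorem~\ref{thm:restricted-range}---the off-diagonal gap $\tfrac1p-\tfrac1q$ must be reproduced as $\tfrac1{p_0}-\tfrac1{q_0}$ on the right. Checking that this seemingly over-determined system is always solvable when $p_-<p<p_+$, $\tfrac1p-\tfrac1q=\tfrac1r$, and $\tfrac1r\le\tfrac1{p_+}$ is the ``miracle'' of extrapolation referred to after Theorem~\ref{thm:restricted-range}; following the change-of-variables calculation in \cite[Section~4]{CruzUribeSFO:2017km} and \cite{CruzUribe:2017vx}, the cleanest route is to carry every exponent as a reciprocal $\tfrac1{p_-},\tfrac1{p_+},\tfrac1{p_0},\tfrac1{q_0},\tfrac1p,\tfrac1q$, in which coordinates the constraints become visibly compatible. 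The constraint $\tfrac1r\le\tfrac1{p_+}$ is precisely what keeps the rescaled iteration exponent above $1$, where the maximal-operator bounds of Theorem~\ref{thm:Ap-max} are available.
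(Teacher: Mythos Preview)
Your proposal is correct and follows essentially the same approach as the paper. The paper does not give a detailed proof of Theorem~\ref{thm:restricted-offdiag} either: it simply states that the proof follows the heuristic described after Theorem~\ref{thm:restricted-range} (iteration algorithms on the rescaled space $L^\tau(u)$, rescaled majorants $H_1,H_2$, dualization at an exponent $s$, and reverse factorization via Theorem~\ref{thm:gen-factorization}), with the main work being to verify consistency of the exponent constraints, and refers to~\cite{CruzUribe:2017vx} for the details---exactly the outline you have written, including your correct adaptation $u=w^{p(p_+/p)'}$ for the condition $w^p\in A_{p/p_-}\cap RH_{(p_+/p)'}$.
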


The proof of Theorem~\ref{thm:restricted-offdiag} is similar to that
of Theorem~\ref{thm:restricted-offdiag};  following the heuristic argument laid
out above, the central difficulty in the
proof is determining the constraints on the constants and showing
that they are consistent.  

Theorem~\ref{thm:restricted-offdiag} generalizes almost all of the extrapolation
theorems we have discussed as well as several others in the literature
we have passed over.
\begin{itemize}
\item If we take $p_-=1$, $p_+=\infty$, and $p_0=q_0$, then we get the
  classical Rubio de Francia extrapolation theorem,
  Theorem~\ref{thm:rdf-algorithm}.  

\item If we take $p_-=0$, $p_+=\infty$, and $p_0=q_0$, then we get
  $A_\infty$ extrapolation, Corollary~\ref{cor:Ainfty-rdf}.

\item If we take $p_-=1$, $p_0<q_0$,
  $p_+=\big(\frac{1}{p_0}-\frac{1}{q_0}\big)^{-1}$, then we get an
  off-diagonal extrapolation theorem  due to Harboure, Macias and
  Segovia~\cite{harboure-macias-segovia88}.   This result allows one
  to extrapolate inequalities of the form~\eqref{eqn:MW1} using
  weights in $A_{p,q}$.  To see that these are equivalent, note that
  by our assumptions and Proposition~\ref{prop:Ap-RHs}, $w\in A_{p,q}$
  if and only if $w^{p} \in A_p \cap RH_{q/p}= A_{p/p_-}\cap
  RH_{(p_+/p)'}$.  

\item If we take $0<p_-<p_+<\infty$ and $p_0=q_0$, we get the limited
  range extrapolation theorem of Auscher and
  Martell, Theorem~\ref{thm:restricted-range}.

\item If we take $p_-=0$, $p_+=1$ and $q_0=p_0$, we get the extrapolation
  theorem for reverse H\"older weights  discovered independently by
  Martell and Prisuelos~\cite{martell-prisuelos}
  and~\cite{Anderson2017Extrapolation-i}.  In the first reference this
  was used to proved weighted norm inequalities for conical square
  functions associated with elliptic operators, and in the second to
  prove weighted norm inequalities for the bilinear fractional
  integral operator.
\end{itemize}

We also note that there is significant overlap
between Theorem~\ref{thm:restricted-offdiag} and an off-diagonal
extrapolation theorem due to Duoandikoetxea~\cite{duoandikoetxeaP}.

\begin{theorem} \label{thm:off-diag-duo}
Given a family of extrapolation pairs $\F$, suppose that for some
$1\leq p_0<\infty$, $0<q_0,\,r_0<\infty$, and $w\in A_{p_0,r_0}$,
inequality~\eqref{eqn:restricted-offdiag1} holds.  Then for all
$1<p<\infty$ and $0<q,\,r<\infty$ such that
\[ \frac{1}{q}-\frac{1}{q_0} = \frac{1}{r}-\frac{1}{r_0} =
  \frac{1}{p}-\frac{1}{p_0}, \]
 and all $w\in A_{p,r}$, inequality~\eqref{eqn:restricted-offdiag2}
 holds.  
\end{theorem}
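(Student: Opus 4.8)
The plan is to deduce Theorem~\ref{thm:off-diag-duo} from the limited range off-diagonal extrapolation theorem, Theorem~\ref{thm:restricted-offdiag}, in the principal range of parameters, and to fall back on Duoandikoetxea's direct argument~\cite{duoandikoetxeaP} otherwise. For the reduction I would invoke Theorem~\ref{thm:restricted-offdiag} with the same initial exponents $p_0$ and $q_0$ and with the endpoints $p_-=1$ and $p_+=\left(\tfrac1{p_0}-\tfrac1{r_0}\right)^{-1}$ (understood as $p_+=\infty$ when $r_0=p_0$). The crucial point is that $p$, $q$ and $r$ all shift in parallel off $p_0$, $q_0$ and $r_0$, so that $\tfrac1p-\tfrac1r=\tfrac1{p_0}-\tfrac1{r_0}=\tfrac1{p_+}$; a short calculation then gives $(p_+/p)'=r/p$, and likewise $(p_+/p_0)'=r_0/p_0$. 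Hence the limited range weight classes $A_{p/p_-}\cap RH_{(p_+/p)'}$ and $A_{p_0/p_-}\cap RH_{(p_+/p_0)'}$ occurring in Theorem~\ref{thm:restricted-offdiag} are nothing but $A_p\cap RH_{r/p}$ and $A_{p_0}\cap RH_{r_0/p_0}$.

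The key step is then to identify these intersections with the Muckenhoupt--Wheeden classes $A_{p,r}$ and $A_{p_0,r_0}$ of~\eqref{eqn:Apq-cond}. By Proposition~\ref{prop:Ap-RHs}, $w^p\in A_p\cap RH_{r/p}$ if and only if $(w^p)^{r/p}=w^r\in A_\tau$ with $\tau=\tfrac{r}{p}(p-1)+1=1+\tfrac{r}{p'}$, and $w^r\in A_{1+r/p'}$ is exactly the condition $w\in A_{p,r}$ (cf.\ the equivalence recorded in the list following Theorem~\ref{thm:restricted-offdiag}); the same computation with $(p,r)$ replaced by $(p_0,r_0)$ handles the hypothesis class. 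Thus the assumption of Theorem~\ref{thm:off-diag-duo}, namely that \eqref{eqn:restricted-offdiag1} holds for every $w_0\in A_{p_0,r_0}$, is exactly hypothesis~\eqref{eqn:restricted-offdiag1} of Theorem~\ref{thm:restricted-offdiag} for this choice of endpoints, and its conclusion~\eqref{eqn:restricted-offdiag2} for every $w\in A_{p,r}$ is exactly~\eqref{eqn:restricted-offdiag2}. It remains only to verify the structural restrictions of Theorem~\ref{thm:restricted-offdiag}: $p_-\le p_0\le p_+$ holds as soon as $p_0\le r_0$, and $\tfrac1{q_0}-\tfrac1{p_0}+\tfrac1{p_+}=\tfrac1{q_0}-\tfrac1{r_0}\ge 0$ holds as soon as $q_0\le r_0$; moreover, via the parallel shift, the range $p_-<p<p_+$ of the conclusion coincides with the set of $p\in(1,\infty)$ for which the associated $q$ and $r$ are positive. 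So this argument is complete whenever $r_0\ge\max(p_0,q_0)$, in particular in the natural case $r_0=q_0\ge p_0$, where $r=q$ throughout and one recovers the off-diagonal extrapolation theorem of Harboure, Mac\'\i as and Segovia already listed as a special case of Theorem~\ref{thm:restricted-offdiag}.

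The main obstacle is the residual range $r_0<\max(p_0,q_0)$, where the choice $p_+=\left(\tfrac1{p_0}-\tfrac1{r_0}\right)^{-1}$ is no longer admissible and Theorem~\ref{thm:restricted-offdiag} cannot be applied directly. Here I would run Duoandikoetxea's argument from the start, along the lines of the heuristic described for Theorem~\ref{thm:restricted-offdiag}: given $w\in A_{p,r}$, pass to the genuine Muckenhoupt weight $w^r\in A_{1+r/p'}$ and to its dual, form two rescaled Rubio de Francia iteration algorithms $\Rh_1,\Rh_2$ adapted to them, dualize $\|f\|_{L^q(w^q)}$ against an intermediate exponent, apply H\"older's inequality, and use Theorem~\ref{thm:gen-factorization} to build a weight in the class $A_{p_0,r_0}$ to which~\eqref{eqn:restricted-offdiag1} applies. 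As is typical in extrapolation proofs, the difficulty is not any single estimate but the bookkeeping: each step imposes a constraint on the several rescaling parameters and on the duality exponent, and one must check that these constraints are simultaneously solvable---the ``miracle'' of extrapolation. For the details of this direct argument see~\cite{duoandikoetxeaP}; it is closely related both to Theorem~\ref{thm:restricted-offdiag} and to the treatment in~\cite{CruzUribe:2017vx}.
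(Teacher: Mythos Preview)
The paper does not give a self-contained proof of Theorem~\ref{thm:off-diag-duo}; it is stated as Duoandikoetxea's result with a reference to~\cite{duoandikoetxeaP}.  What the paper does supply, in the paragraphs immediately following the statement, is precisely the reduction you carry out in your first two paragraphs: take $p_-=1$, $p_+=\big(\tfrac{1}{p_0}-\tfrac{1}{r_0}\big)^{-1}$, use Proposition~\ref{prop:Ap-RHs} to identify $w\in A_{p,r}$ with $w^{p}\in A_{p}\cap RH_{r/p}=A_{p/p_-}\cap RH_{(p_+/p)'}$, and then invoke Theorem~\ref{thm:restricted-offdiag}.  So your approach coincides with the paper's.

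One small point in your favor: you record the threshold for this reduction as $r_0\ge\max(p_0,q_0)$, whereas the paper writes $r_0\ge\min(p_0,q_0)$.  Your version is the one that actually makes the argument go through: the definition of $p_+$ already requires $r_0\ge p_0$, and the paper's own verification of the condition $\tfrac{1}{q_0}-\tfrac{1}{p_0}+\tfrac{1}{p_+}\ge 0$ uses $r_0\ge q_0$.  For the residual range neither you nor the paper gives an argument; both defer to~\cite{duoandikoetxeaP}, and the paper explicitly notes that Theorem~\ref{thm:restricted-offdiag} does not appear to yield Theorem~\ref{thm:off-diag-duo} there.
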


Note that in the statement of Theorem~\ref{thm:off-diag-duo}, unlike
in the classical definition~\eqref{eqn:Apq-cond}, we do not assume
$p_0<r_0$ or $p<r$.  

If we assume that $r_0\geq \min(p_0,q_0)$, then
Theorem~\ref{thm:off-diag-duo} can be gotten from
Theorem~\ref{thm:restricted-offdiag} by taking $p_-=1$ and $p_+=
\big(\frac{1}{p_0}-\frac{1}{r_0}\big)^{-1}$.  For in this case, by
Proposition~\ref{prop:Ap-RHs} we have that $w\in A_{p_0,r_0}$ is
equivalent to $w^{p_0} \in A_{p_0}\cap RH_{r_0/p_0} = A_{p_0/p_-}\cap
RH_{(p_+/p_0)'}$, and we have
$\frac{1}{q_0}-\frac{1}{p_0}+\frac{1}{p_+}\geq 0$, since $r_0\geq
q_0$. 

Despite this overlap, there are differences between these two theorems.  In
Theorem~\ref{thm:restricted-offdiag} we eliminate the restriction
$p_0,\,p>1$.  And, for values of $p_-\neq 1$, it is not clear whether
Theorem~\ref{thm:restricted-offdiag} can be gotten from
Theorem~\ref{thm:off-diag-duo} by rescaling.  On the other hand,
Theorem~\ref{thm:restricted-offdiag} does not seem to imply
Theorem~\ref{thm:off-diag-duo}  when $r_0<\min(p_0,q_0)$.  

\section{Bilinear extrapolation}
\label{section:bilinear}

In this section we introduce bilinear extrapolation and  show how
Theorem~\ref{thm:restricted-offdiag} can be used to prove it.  All of
the results we consider in this section are true in the multilinear
case, but we restrict ourselves to bilinear inequalities to simplify
the presentation.  

We are interested in weighted, bilinear inequalities of the form
\begin{equation} \label{eqn:bilinear}
 \|T(f,g)\|_{L^p(w^p)} \lesssim 
\|f\|_{L^{p_1}(w_1^{p_1})}\|g\|_{L^{p_2}(w_2^{p_2})}, 
\end{equation}
where $1<p_1,\,p_2<\infty$, 
$\frac{1}{p}=\frac{1}{p_1}+\frac{1}{p_2}$, and $w=w_1w_2$.\footnote{It is also possible to
consider endpoint inequalities where $p_1=1$ or $p_2=1$ and we replace
$L^p(w^p)$ by $L^{p,\infty}(w^p)$, but for brevity we will not consider
this case.}  Weighted norm inequalities of this kind were first
considered by Grafakos and Torres~\cite{MR1947875} and Grafakos and
Martell~\cite{MR2030573} for bilinear Calder\'on-Zygmund singular integrals.   Lerner, {\em et al.}~\cite{MR2483720} introduced a generalization of the Muckenhoupt $A_p$ condition.
Given $\vec{p}=(p_1,p_2,p)$, $\vec{w}=(w_1,w_2,w)$, we say $\vec{w}\in
A_{\vec{p}}$ if 
\[ [\vec{w}]_{A_{\vec{p}}} = \sup_Q 
\left(\avgint_Q w^p\,dx\right)^{\frac{1}{p}}
\left(\avgint_Q w_1^{-p_1'}\,dx\right)^{\frac{1}{p_1'}}
\left(\avgint_Q w_2^{-p_2'}\,dx\right)^{\frac{1}{p_2'}}
< \infty. \]
They showed that a necessary and sufficient condition for the bilinear
maximal operator\footnote{Properly, this operator should be
called the ``bi-sublinear'' maximal operator, but it is common  to
abuse terminology and simply refer to it as a bilinear operator.} 
\[ M(f,g)(x) = \sup_Q \avgint_Q |f|\,dy\, \avgint_Q |g|\,dy \cdot
  \chi_Q(x), \]
is that $\vec{w}\in A_{\vec{p}}$.
Using this fact they showed that the $A_{\vec{p}}$ condition is sufficient
for a bilinear Calder\'on-Zygmund singular integral operator $T$ to
satisfy \eqref{eqn:bilinear}.    (For the precise definition of these
operators and their unweighted theory, see~\cite{MR1880324}.)

It is natural to expect that there is a bilinear extrapolation theory
for weights in $A_{\vec{p}}$, but it is unknown whether this is
possible.   This remains a very important open question in the theory
of bilinear weighted norm inequalities. 

\medskip

Therefore, to develop a theory of extrapolation we will work with a
restricted class of weights $\vec{w}$ where $w_i^{p_i} \in A_{p_i}$,
$i=1,2$.  By H\"older's inequality, we have that in this case
$\vec{w}\in A_{\vec{p}}$, but it is relatively straightforward to
construct examples of weights in $A_{\vec{p}}$ such that $w_i^{p_i}
\not\in A_{p_i}$:  see~\cite{DCU-KM-2017,MR2483720}.

We generalize the formalism of extrapolation pairs to the bilinear
setting by defining a family $\F$ of extrapolation triples:  $(f,g,h)$
such that each function is non-negative, measurable, and not
identically $0$.  If we write
\[ \|h\|_{L^p(w^p)} \lesssim 
\|f\|_{L^{p_1}(w_1^{p_1})}\|g\|_{L^{p_2}(w_2^{p_2})}, \qquad (f,g,h) \in \F, \]
then we mean that this inequality holds for every triple in $\F$ such
that $\|h\|_{L^p(w^p)}<\infty$.  As in the linear case, it is
straightforward to prove weighted norm inequalities for operators:
the ideas in Section~\ref{section:examples} extend immediately to the
bilinear setting.  Similarly, we can use extrapolation to prove
bilinear versions of Corollaries~\ref{cor:weak-rdf}
and~\ref{cor:vv-rdf}.

Bilinear extrapolation was first proved for operators by Grafakos and Martell~\cite{MR2030573};
the following theorem generalizes their result to families of
extrapolation triples.

\begin{theorem} \label{thm:bilinear-extrapol}
Given a family $\F$ of extrapolation triples, suppose that for some
$\vec{p}=(p_1,p_2,p)$, where $1\leq p_1,\,p_2<\infty$ and
$\frac{1}{p}=\frac{1}{p_1}+\frac{1}{p_2}$, and weights
$\vec{w}=(w_1,w_2,w)$ such that $w_i^{p_i}\in A_{p_i}$ and $w=w_1w_2$, 
\[ \|h\|_{L^p(w^p)} \lesssim 
\|f\|_{L^{p_1}(w_1^{p_1})}\|g\|_{L^{p_2}(w_2^{p_2})}, \qquad  (f,g,h) \in \F. \]
Then for every $\vec{q}=(q_1,q_2,q)$, where $1< q_1,\,q_2<\infty$ and
$\frac{1}{q}=\frac{1}{q_1}+\frac{1}{q_2}$, and weights
$\vec{w}=(w_1,w_2,w)$ such that $w_i^{q_i}\in A_{q_i}$ and $w=w_1w_2$,
\[ \|h\|_{L^q(w^q)} \lesssim 
\|f\|_{L^{q_1}(w_1^{q_1})}\|g\|_{L^{q_2}(w_2^{q_2})}, \qquad (f,g,h) \in \F. \]
\end{theorem}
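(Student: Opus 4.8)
The plan is to deduce Theorem~\ref{thm:bilinear-extrapol} from the limited range, off-diagonal extrapolation theorem, Theorem~\ref{thm:restricted-offdiag}, following an idea of Duoandikoetxea~\cite{duoandikoetxeaP}: a bilinear inequality, with one of its two arguments frozen, is just an off-diagonal \emph{linear} inequality relating the remaining argument to the output. I would change the two exponents one at a time: first $p_1$ to $q_1$ (keeping the second slot and $w_2$ fixed), then $p_2$ to $q_2$ (keeping the first slot and $w_1$ fixed).

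For the first step, fix a weight $w_2$ with $w_2^{p_2}\in A_{p_2}$ and define the linear extrapolation family
\[ \F_1(w_2)=\left\{\left(\frac{h\,w_2}{\|g\|_{L^{p_2}(w_2^{p_2})}},\ f\right):(f,g,h)\in\F,\ 0<\|g\|_{L^{p_2}(w_2^{p_2})}<\infty\right\}. \]
Since $w=w_1w_2$, one has $\int(hw_2/\|g\|_{L^{p_2}(w_2^{p_2})})^{p}w_1^{p}\,dx=\|h\|_{L^p(w^p)}^{p}/\|g\|_{L^{p_2}(w_2^{p_2})}^{p}$, so the hypothesis of the theorem is exactly the statement that $\F_1(w_2)$ satisfies~\eqref{eqn:restricted-offdiag1} with $p_-=1$, $p_+=\infty$, $p_0=p_1$, $q_0=p$, the weight $w_0=w_1$ running over all weights with $w_1^{p_1}\in A_{p_1}$. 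The hypotheses of Theorem~\ref{thm:restricted-offdiag} hold: $p_-\le p_0\le p_+$ since $1\le p_1<\infty$, and $\tfrac1{q_0}-\tfrac1{p_0}+\tfrac1{p_+}=\tfrac1p-\tfrac1{p_1}=\tfrac1{p_2}\ge0$ by the H\"older relation $\tfrac1p=\tfrac1{p_1}+\tfrac1{p_2}$. Hence for every $q_1\in(1,\infty)$ and every $w_1$ with $w_1^{q_1}\in A_{q_1}$, setting $\tfrac1{\tilde q}=\tfrac1{q_1}+\tfrac1{p_2}$, we get
\[ \|h\|_{L^{\tilde q}((w_1w_2)^{\tilde q})}\lesssim\|f\|_{L^{q_1}(w_1^{q_1})}\|g\|_{L^{p_2}(w_2^{p_2})},\qquad(f,g,h)\in\F, \]
with a constant depending only on $\F$, the exponents, $[w_1^{q_1}]_{A_{q_1}}$ and $[w_2^{p_2}]_{A_{p_2}}$. (Triples with $\|g\|_{L^{p_2}(w_2^{p_2})}=\infty$ make this inequality vacuous, and the truncation argument of Section~\ref{section:examples} removes the restriction that the left-hand side be finite.)

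This intermediate inequality has exactly the structure of the hypothesis, now for the exponent triple $(q_1,p_2,\tilde q)$, so the second step is the same construction with the two slots interchanged: fix $w_1$ with $w_1^{q_1}\in A_{q_1}$, set $\F_2(w_1)=\{(hw_1/\|f\|_{L^{q_1}(w_1^{q_1})},\,g):(f,g,h)\in\F,\ 0<\|f\|_{L^{q_1}(w_1^{q_1})}<\infty\}$, and apply Theorem~\ref{thm:restricted-offdiag} again with $p_-=1$, $p_+=\infty$, $p_0=p_2$, $q_0=\tilde q$, using the first-step inequality as the hypothesis. Here $1\le p_2<\infty$ and $\tfrac1{\tilde q}-\tfrac1{p_2}=\tfrac1{q_1}\ge0$, so the theorem applies and yields, for every $q_2\in(1,\infty)$, every $w_2$ with $w_2^{q_2}\in A_{q_2}$, and $q$ determined by $\tfrac1q-\tfrac1{q_2}=\tfrac1{\tilde q}-\tfrac1{p_2}$, that is $\tfrac1q=\tfrac1{q_1}+\tfrac1{q_2}$,
\[ \|h\|_{L^q((w_1w_2)^q)}\lesssim\|f\|_{L^{q_1}(w_1^{q_1})}\|g\|_{L^{q_2}(w_2^{q_2})},\qquad(f,g,h)\in\F. \]
With $w=w_1w_2$ this is precisely the conclusion of Theorem~\ref{thm:bilinear-extrapol}.

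All of the analytic substance is already contained in Theorem~\ref{thm:restricted-offdiag}; what remains is bookkeeping, and the places that need genuine care are: (i) the compatibility conditions at each step, where with $p_+=\infty$ the condition $\tfrac1{q_0}-\tfrac1{p_0}+\tfrac1{p_+}\ge0$ is simply $q_0\le p_0$, which holds because the H\"older relation $\tfrac1p=\tfrac1{p_1}+\tfrac1{p_2}$ forces $p<p_1$ (and $\tilde q<p_2$ in the second step); (ii) checking that the off-diagonal relation $\tfrac1p-\tfrac1q=\tfrac1{p_0}-\tfrac1{q_0}$ composes, over the two steps, into the bilinear relation $\tfrac1q=\tfrac1{q_1}+\tfrac1{q_2}$; and (iii) verifying that the constant produced by the first step depends on the weights only through their $A_p$ characteristics, so that it is an admissible hypothesis constant for the second step. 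The asymmetry that $p_i\ge1$ is allowed in the hypothesis while $q_i>1$ is required in the conclusion is inherited directly from the strict inequality $p_-<p<p_+$ in Theorem~\ref{thm:restricted-offdiag}.
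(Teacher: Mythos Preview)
Your proposal is correct and follows essentially the same approach as the paper's proof: both freeze one slot and apply Theorem~\ref{thm:restricted-offdiag} with $p_-=1$, $p_+=\infty$ to move $p_1\to q_1$, then repeat to move $p_2\to q_2$. The only cosmetic difference is that the paper fixes both $w_2$ and a particular $g$ when forming the auxiliary family $\F_g$, whereas you fix only $w_2$ and let $g$ vary inside $\F_1(w_2)$; these are equivalent, and your bookkeeping remarks at the end (especially point~(iii) on the dependence of constants) are exactly the checks the paper carries out implicitly.
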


We can also prove a restricted range version of
Theorem~\ref{thm:bilinear-extrapol}, but in order to make the main
ideas of the proof clearer, we omit this generalization.  For details,
see~\cite{CruzUribe:2017vx}. 

\begin{proof}
Our proof is adapted from Duoandikoetxea~\cite{duoandikoetxeaP}.  Given
$1<p_1,\,p_2<\infty$,  fix $w_2^{p_2}\in A_{p_2}$ and fix a
function $g$ such that there exist functions $f,\,h$ with $(f,g,h)\in
\F$.  By assumption $\|g\|_{L^{p_2}(w^{p_2})}>0$; assume for the moment that
$\|g\|_{L^{p_2}(w^{p_2})}<\infty$.  Define a new family of extrapolation pairs
\[ \F_g  = \big\{ (F,f) = \big(hw_2\|g\|_{L^{p_2}(w_2^{p_2})}^{-1},f) :
  (f,g,h) \in \F\big\}.  \]
Let $p_1=1$, $p_+=\infty$.  Since $\|F\|_{L^{p}(w^{p})}<\infty$ if and only if
  $\|h\|_{L^p(w^p)}<\infty$, for all $w_1$ such that $w_1^{p_1}\in A_{p_1}=
  A_{p_1/p_-} \cap RH_{(p_+/p_1)'}$, 
\[ \|F\|_{L^p(w_1^p)} \lesssim \|f\|_{L^{p_1}(w_1^{p_1})},  \qquad
  (F,f) \in \F_g. \]
Moreover, we have that $\frac{1}{p}-\frac{1}{p_1}+\frac{1}{p_+} \geq
0$ since $\frac{1}{p}>\frac{1}{p_1}$.  Therefore, by
Theorem~\ref{thm:restricted-offdiag}, for all $q$ and $q_1$ such that
\begin{equation} \label{eqn:q-cond}
\frac{1}{q}-\frac{1}{q_1} = \frac{1}{p}-\frac{1}{p_1}, 
\end{equation}
and all $w_1$ such that $w_1^{q_1} \in A_{q_1}$, 
\[ \|F\|_{L^q(w_1^q)} \lesssim \|f\|_{L^{q_1}(w_1^{q_1})},
\qquad (F,f) \in \F_g. \]
By the definition of $\F_g$ we therefore have that
\begin{equation} \label{eqn:bilinear-intermed}
\|h\|_{L^q(w^q)} \leq \|f\|_{L^{q_1}(w_1^{q_1})}\|g\|_{L^{p_2}(w_2^{p_2})},
\end{equation}
provided that we assume that $\|g\|_{L^{p_2}(w_2^{p_2})}<\infty$.
However, if $\|g\|_{L^{p_2}(w_2^{p_2})}=\infty$, then
inequality~\eqref{eqn:bilinear-intermed} still holds.  
Since this is true for all  $g$ and $w_2$ with
$w_2^{p_2}\in A_{p_2}$, we must have that \eqref{eqn:bilinear-intermed}
holds for all $(f,g,h)\in \F$.  Furthermore,
note that \eqref{eqn:q-cond} implies that 
\[ \frac{1}{q} = \frac{1}{q_1} +
  \frac{1}{p_1}+\frac{1}{p_2}-\frac{1}{p_1}
= \frac{1}{q_1}+\frac{1}{p_2}.  \]

We now  repeat this argument:  fix $q$ and $q_1$ such that
$\frac{1}{q} = \frac{1}{q_1}+\frac{1}{p_2}$ and weight $w_1$ such that
$w_1^{q_1}\in A_{q_1}$.  Fix a function $f$ such that
$0<\|f\|_{L^{q_1}(w_1^{q_1})}<\infty$ and there exist
$g,\,h$ with $(f,g,h)\in \F$.   Define the new family
\[ \F_f = \big\{ (G,g) = (hw_1 \|f\|_{L^{q_1}(w_1^{q_1})}^{-1}, g) :
  (f,g,h) \in \F \big\}.  \]
Then we can argue as above, applying Theorem~\ref{thm:restricted-offdiag} to
conclude that for all $1<q_1,q_2<\infty$ and $w_i^{q_i}\in A_{q_i}$,
$i=1,2$, 
\[ \|h\|_{L^q(w^q)} \lesssim \|f\|_{L^{q_1}(w_1^{q_1})}
\|g\|_{L^{q_2}(w_2^{q_2})}, \qquad (f,g,h) \in \F. \]
\end{proof}

\bigskip

As an application of Theorem~\ref{thm:bilinear-extrapol} we give an
elementary proof of  weighted norm inequalities for bilinear
Calder\'on-Zygmund singular integral operators for this restricted
class of weights.   

\begin{theorem}  \label{thm:bilinear-czo}
Let $T$ be a bilinear Calder\'on-Zygmund singular integral operator.
Then for all $1<p_1,\,p_2<\infty$,
$\frac{1}{p}=\frac{1}{p_1}+\frac{1}{p_2}$, and weights $w_i^{p_i} \in
A_{p_i}$, $i=1,2$, $w=w_1w_2$,
\[ \|T(f,g)\|_{L^p(w^p)} \lesssim 
\|f\|_{L^{p_1}(w_1^{p_1})}\|g\|_{L^{p_2}(w_2^{p_2})}. \]
\end{theorem}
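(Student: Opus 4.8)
The plan is to reduce the theorem, via bilinear extrapolation, to a single convenient exponent triple and there to invoke a bilinear sparse domination estimate. Concretely, apply Theorem~\ref{thm:bilinear-extrapol} to the family of extrapolation triples
\[ \F = \{ (|f|,|g|,|T(f,g)|) : f,g\in L_c^\infty \}, \]
using the truncation and approximation argument of Section~\ref{section:examples} to dispense with the {\em a priori} finiteness of the left-hand side, and a final density argument to pass from $L_c^\infty$ to arbitrary $f\in L^{p_1}(w_1^{p_1})$, $g\in L^{p_2}(w_2^{p_2})$. Since in Theorem~\ref{thm:bilinear-extrapol} the starting exponents need only satisfy $1\leq p_1,p_2<\infty$, it suffices to verify its hypothesis at $p_1=p_2=2$ (so $p=1$): for all weights with $w_1^2,\,w_2^2\in A_2$ and $w=w_1w_2$,
\[ \int_\subRn |T(f,g)|\,w\,dx \lesssim \|f\|_{L^2(w_1^2)}\|g\|_{L^2(w_2^2)}, \]
with implicit constant depending only on $n$, $[w_1^2]_{A_2}$ and $[w_2^2]_{A_2}$; the conclusion of Theorem~\ref{thm:bilinear-extrapol} then gives the full range $1<p_1,p_2<\infty$.

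For this base case I would use the bilinear analogue of the sparse domination estimate~\eqref{eqn:sparse-domination}: there exist sparse sets $\Ss_t\subset\D^t$, $t\in\{0,\pm1/3\}^n$, with
\[ |T(f,g)(x)| \lesssim \sum_t T_{\Ss_t}(|f|,|g|)(x), \qquad T_\Ss(f,g)(x)=\sum_{Q\in\Ss}\Big(\avgint_Q f\,dy\Big)\Big(\avgint_Q g\,dy\Big)\chi_Q(x). \]
It is then enough to bound $\int_\subRn T_\Ss(f,g)\,w\,dx = \sum_{Q\in\Ss}\big(\avgint_Q f\big)\big(\avgint_Q g\big)w(Q)$ for a single sparse $\Ss\subset\D^t$ and nonnegative $f,g$. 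I would apply the Cauchy--Schwarz inequality twice: first to the inner integral, $w(Q)=\int_Q w_1w_2\,dx \leq |Q|\big(\avgint_Q w_1^2\,dx\big)^{1/2}\big(\avgint_Q w_2^2\,dx\big)^{1/2}$, and then to the sum over $Q$, reducing matters to the single-weight bound
\[ \sum_{Q\in\Ss}\Big(\avgint_Q f\,dy\Big)^2 u(Q) \lesssim \int_\subRn f^2 u\,dx, \qquad u\in A_2. \]
This is exactly the $p=2$ case of the sparse-operator estimate that already appears inside the proof of Theorem~\ref{thm:A2-conj}: with $\sigma=u^{-1}\in A_2$ one writes $\avgint_Q f\,dy = \tfrac{\sigma(Q)}{|Q|}\avgint_Q fu\,d\sigma$, uses the $A_2$ condition, replaces $\sigma(Q)$ by $\sigma(E_Q)$ via Lemma~\ref{lemma:Ap-prop}, and applies the $L^2(\sigma)$ boundedness of the weighted dyadic maximal operator from Lemma~\ref{lemma:wtd-max-op}.

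The one external ingredient, and the only real obstacle, is the bilinear sparse domination estimate above; unlike the linear version in Section~\ref{section:examples}, I would cite it (it is by now standard, together with its multilinear extensions) rather than reprove it. Everything else is elementary: the two Cauchy--Schwarz steps and the weighted-maximal argument are routine, and the passage from the base exponent $(2,2,1)$ to all $1<p_1,p_2<\infty$ is precisely the content of Theorem~\ref{thm:bilinear-extrapol}. As an alternative one could skip sparse operators entirely and obtain the base case by citing the $A_{\vec p}$-weighted bound of Lerner {\em et al.} for bilinear Calder\'on-Zygmund operators, noting that $w_i^{p_i}\in A_{p_i}$ forces $\vec w\in A_{\vec p}$ by H\"older's inequality; the sparse route is preferable only in that it keeps the argument self-contained modulo the domination theorem.
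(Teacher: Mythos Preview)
Your proposal is correct and shares the overall architecture of the paper's proof: reduce via Theorem~\ref{thm:bilinear-extrapol} to the case $p_1=p_2=2$, $p=1$, invoke bilinear sparse domination~\eqref{eqn:bilinear-sparse}, and then prove the weighted $L^1$ bound for a single bilinear sparse operator $T_\Ss$.

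The genuine difference is in how the $L^1(w)$ bound for $T_\Ss$ is obtained. The paper keeps the bilinear structure intact: it rewrites the sum using the weighted averages $\avgint_Q fw_1^2\,d\sigma_1$ and $\avgint_Q gw_2^2\,d\sigma_2$ with $\sigma_i=w_i^{-2}$, and the key estimate is the multilinear reverse H\"older inequality (Proposition~\ref{prop:multilinear-rhi}) applied to $w_1,w_2\in A_2\cap RH_2$, together with the observation that $w_1^{-1}w_2^{-1}\in A_2$; this produces the factor $\int_{E_Q}\sigma_1^{1/2}\sigma_2^{1/2}\,dx$, after which a single H\"older and two applications of Lemma~\ref{lemma:wtd-max-op} finish the argument. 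Your route instead \emph{decouples} immediately: Cauchy--Schwarz on $w(Q)=\int_Q w_1w_2$ gives $w(Q)\le (w_1^2(Q))^{1/2}(w_2^2(Q))^{1/2}$, and Cauchy--Schwarz on the sum over $Q$ reduces everything to two independent copies of the linear Carleson-type bound $\sum_{Q\in\Ss}(\avgint_Q f)^2 u(Q)\lesssim \|f\|_{L^2(u)}^2$ for $u\in A_2$, which is exactly the estimate inside the proof of Theorem~\ref{thm:A2-conj}. Your argument is shorter and avoids Proposition~\ref{prop:multilinear-rhi} entirely; the paper's argument, while more elaborate, illustrates how the multilinear reverse H\"older machinery enters and stays closer to the genuinely bilinear weight structure.
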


\begin{proof}
Again, we use domination by sparse operators.  If
$T$ is a bilinear singular integral and $f,\,g\in L^\infty_c$, then,
with the same notation for dyadic grids used in
Section~\ref{section:examples}, there exist sparse sets $\Ss_t$
such that 
\begin{equation} \label{eqn:bilinear-sparse}
 |T(f,g)(x)| \lesssim \sum_{t\in \{0,\pm\frac{1}{3}\}^n}
T_{\Ss_t}(|f|,|g|), 
\end{equation}
where for any sparse set $\Ss$,
\[ T_{\Ss}(f,g)(x) = \sum_{Q\in \Ss} \avgint_Q f\,dy \,\avgint_Q g\,dy
  \cdot \chi_Q(x). \]

By Theorem~\ref{thm:bilinear-extrapol} it will suffice to show that
given any dyadic grid $\D^t$, sparse set $\Ss\subset \D^t$, and
weights $w_i$ such that $w_i^2\in A_2$, $i=1,2$, we have that for all
non-negative functions $f,\,g\in L^\infty_c$,
\[ \|T_\Ss (f,g)\|_{L^1(w)}
\lesssim \|f\|_{L^2(w_1^2)}\|g\|_{L^2(w_2^2)}. \]
The proof is nearly identical to the argument in the linear case given in
the proof of Theorem~\ref{thm:A2-conj}.  Let $\sigma_i=w_i^{-2}$, $i=1,2$.
Then 
\begin{align*}
\|T_\Ss (f,g)\|_{L^1(w)}
& = \sum_{Q\in\Ss} \avgint_Q f\,dx\, \avgint_Q g\,dx \, \avgint_Q
  w\,dx\,  |Q|
  \\
& = \sum_{Q\in\Ss} \avgint_Q w_1w_2\,dx \,
\avgint_Q \sigma_1\,dx \, \avgint_Q \sigma_2\,dx \,
\avgint_Q fw_1^2\, d\sigma_1 \, \avgint_Q g w_2^2\,d\sigma_2 \,|Q|.
\end{align*}

By assumption, $w_i^{-2} \in A_2$, and so by
Proposition~\ref{prop:Ap-RHs}, $w_i \in A_2\cap RH_2$.  Therefore, by
H\"older's inequality and 
Proposition~\ref{prop:multilinear-rhi}, 
\begin{multline*}
 \avgint_Q w_1w_2\,dx \avgint_Q w_1^{-2}\,dx 
\avgint_Q w_2^{-2}\,dx  \\
\lesssim 
\left(\avgint_Q w_1^{2}\,dx\right)^{\frac{1}{2}}
\left(\avgint_Q w_2^{2}\,dx\right)^{\frac{1}{2}}
\left(\avgint_Q w_1^{-2}\,dx\right)^{\frac{1}{2}}
\left(\avgint_Q w_2^{-2}\,dx\right)^{\frac{1}{2}}
\avgint_Q w_1^{-1}w_2^{-1}\,dx \\
\lesssim
\avgint_Q w_1^{-1}w_2^{-1}\,dx
\lesssim
\frac{1}{|Q|}\int_{E_Q} w_1^{-1}w_2^{-1}\,dx.
\end{multline*}
The last two inequalities hold since $w_1^{-1}w_2^{-1}\in A_2$:  this in
turn follows from H\"older's inequality since $w_i^{-2}\in A_2$,
$i=1,2$.  The final inequality then follows from
Lemma~\ref{lemma:Ap-prop}. 

Hence, we can continue the above estimate, getting
\begin{align*}
& \sum_{Q\in\Ss} \avgint_Q w_1w_2\,dx \,
\avgint_Q \sigma_1\,dx \, \avgint_Q \sigma_2\,dx \,
\avgint_Q fw_1^2\, d\sigma_1 \, \avgint_Q g w_2^2\,d\sigma_2 \, |Q| \\
& \qquad \qquad \lesssim
\sum_{Q\in\Ss} 
\avgint_Q fw_1^2\, d\sigma_1 \, \avgint_Q g w_2^2\,d\sigma_2 \,
\int_{E_Q} \sigma_1^{\frac{1}{2}}\sigma_2^{\frac{1}{2}}\,dx \\
& \qquad \qquad \leq
\int_\subRn M^{\D^t}_{\sigma_1}(fw_1^2) M^{\D^t}_{\sigma_2}(gw_2^2)
\sigma_1^{\frac{1}{2}}\sigma_2^{\frac{1}{2}}\,dx \\
& \qquad \qquad \leq 
\|M^{\D^t}_{\sigma_1}(fw_1^2)\|_{L^2(\sigma_1)}
\|M^{\D^t}_{\sigma_2}(gw_2^2)\|_{L^2(\sigma_2)}; \\
\intertext{by Lemma~\ref{lemma:wtd-max-op}, which holds for arbitrary
  dyadic grids,}
& \qquad \qquad \lesssim
\|fw_1^2\|_{L^2(\sigma_1)}
\|gw_2^2\|_{L^2(\sigma_2)} \\
& \qquad \qquad = 
\|f\|_{L^2(w_1^2)}\|g\|_{L^2(w_2^2)}. 
\end{align*}
\end{proof}

\section{Extrapolation on Banach function spaces}
\label{section:BFS}

In this final section we  discuss how extrapolation can be used to
prove norm inequalities in Banach function spaces, starting from norm
inequalities in weighted $L^p$.  This lets us generalize the aphorism
of Antonio C\'ordoba given in Section~\ref{section:introduction} and
assert: ``{\em There are no Banach function spaces, only weighted
  $L^2$.}'' (Cf.~\cite[Chapter~1]{MR2797562}.)

We begin with some definitions.  For more information on the theory of Banach function
spaces, see Bennett and Sharpley~\cite{bennett-sharpley88}.   Let $\X$ 
be a Banach space of Lebesgue measurable functions defined on $\R^n$
with norm $\|\cdot\|_\X$.
We say that $\X$ is a Banach function space if the norm satisfies the
following properties:
\begin{itemize}
\item if $|f|\leq |g|$ a.e., then $\|f\|_\X\leq \|g\|_\X$;

\item if $|f_k|$ increases pointwise a.e. to $|f|$, then 
$\|f_k\|_\X\rightarrow \|f\|_\X$;

\item if $E\subset \R^n$, $|E|<\infty$, then $\|\chi_E\|_\X<\infty$,
  and there exists $C(E)>0$ such that for all $f\in \X$,
\[ \int_E|f|\,dx \leq C(E)\|f\|_\X. \]
\end{itemize}

Given a Banach function space $\X$, we define the associate space
$\X'$ to be the set of measurable functions $g$ such that 
\[ \|g\|_{\X'} = \sup\left\{ \int_\subRn fg\,dx : \|f\|_\X \leq 1
  \right\} < \infty.  \]
Then $\|\cdot\|_{\X'}$ is a norm and $\X'$ is itself a Banach function
space.    The two norms are related by the generalized H\"older's
inequality, 
\[ \int_\subRn |fg|\,dx \leq \|f\|_\X \|g\|_{\X'}. \]
The associate space embeds (up to an isomorphism) in the dual
space $\X^*$, and in some (though not all) cases they are equal.
However, the associate spaces are always reflexive:  for any Banach
function space $\X$, $(\X')' = \X$. 

\begin{theorem} \label{thm:BFS-extrapol}
Given a family of extrapolation pairs $\F$, suppose that for some
$1\leq p_0<\infty$ and every $w_0\in A_{p_0}$,
\begin{equation} \label{eqn:BFS-extrapol1}
\int_\subRn f^{p_0}w_0\,dx \lesssim 
\int_\subRn g^{p_0}w_0\,dx, \qquad (f,g) \in \F. 
\end{equation}
Let $\X$ be a Banach function space such that the maximal operator
satisfies $M : \X \rightarrow \X$ and $M : \X'\rightarrow \X'$.  Then
\begin{equation} \label{eqn:BFS-extrapol2}
 \|f\|_\X \lesssim \|g\|_\X, \qquad (f,g) \in \F. 
\end{equation}
\end{theorem}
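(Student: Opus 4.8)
The plan is to mimic the single-case argument used to prove Theorem~\ref{thm:rdf-extrapolation}, letting the associate space $\X'$ play the r\^ole that the dual space $L^{p'}(w)$ plays there. First I would make two reductions. Applying Theorem~\ref{thm:rdf-extrapolation} to $\F$ itself, I may assume that the hypothesis~\eqref{eqn:BFS-extrapol1} holds with $p_0=2$ (for every $w_0\in A_2$); this is convenient and sidesteps the degeneracy of the endpoint $p_0=1$. And, as in the approximation step at the start of Section~\ref{section:examples}, I may assume that each $f$ is bounded with compact support, so that $\int_\subRn f^2 w_0\,dx<\infty$ for every locally integrable $w_0$, recovering the general case at the end from the Fatou property of the norm of $\X$. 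If $\|g\|_\X=\infty$ there is nothing to prove, so I fix $(f,g)\in\F$ with $0<\|g\|_\X<\infty$.

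Next, since $(\X')'=\X$, I would write $\|f\|_\X=\sup\big\{\int_\subRn fh\,dx:\ h\ge 0,\ \|h\|_{\X'}\le 1\big\}$ and reduce to bounding $\int_\subRn fh\,dx$ by $C\|g\|_\X$ for each such $h$ (which I may take not identically $0$). Fixing $h$ and setting $G=g/\|g\|_\X$, the two maximal bounds $M:\X\to\X$ and $M:\X'\to\X'$ let me form the Rubio de Francia iteration algorithms $\Rh_1 G=\sum_{k=0}^\infty\frac{M^kG}{2^k\|M\|_\X^k}$ and $\Rh_2 h=\sum_{k=0}^\infty\frac{M^kh}{2^k\|M\|_{\X'}^k}$. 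By the proof of Theorem~\ref{thm:rdf-algorithm}, now with $\X$ (resp.\ $\X'$) in place of $L^p(w)$, one gets $G\le\Rh_1 G$, $\|\Rh_1 G\|_\X\le 2$, and $\Rh_1 G\in A_1$ with $[\Rh_1 G]_{A_1}\le 2\|M\|_\X$, and symmetrically $h\le\Rh_2 h$, $\|\Rh_2 h\|_{\X'}\le 2$, $\Rh_2 h\in A_1$ with $[\Rh_2 h]_{A_1}\le 2\|M\|_{\X'}$ (both are locally integrable, hence finite a.e., and positive a.e., so genuine weights). Then reverse factorization (the easy half of Theorem~\ref{thm:jones}) gives $w_0=(\Rh_2 h)(\Rh_1 G)^{-1}\in A_2$ with $[w_0]_{A_2}\le 4\|M\|_\X\|M\|_{\X'}$, a bound independent of $h$ and $g$.

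The core estimate then proceeds as in Theorem~\ref{thm:rdf-extrapolation}. I would split $fh=\big(f(\Rh_1 G)^{-1/2}(\Rh_2 h)^{1/2}\big)\big((\Rh_1 G)^{1/2}(\Rh_2 h)^{-1/2}h\big)$ and apply H\"older's inequality with exponent $2$. In the resulting second factor, $h\le\Rh_2 h$ collapses the integrand to $(\Rh_1 G)h$, which the generalized H\"older inequality controls: $\int_\subRn(\Rh_1 G)h\,dx\le\|\Rh_1 G\|_\X\|h\|_{\X'}\le 2$. The first factor is $\big(\int_\subRn f^2 w_0\,dx\big)^{1/2}$, finite by the reduction, so~\eqref{eqn:BFS-extrapol1} at $p_0=2$ applies and bounds it by $\big(C\int_\subRn g^2 w_0\,dx\big)^{1/2}$; finally $g=\|g\|_\X G\le\|g\|_\X\Rh_1 G$ turns $\int_\subRn g^2 w_0\,dx$ into $\|g\|_\X^2\int_\subRn(\Rh_1 G)(\Rh_2 h)\,dx\le\|g\|_\X^2\|\Rh_1 G\|_\X\|\Rh_2 h\|_{\X'}\le 4\|g\|_\X^2$. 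Combining these and taking the supremum over $h$ yields~\eqref{eqn:BFS-extrapol2}, with constant depending only on the constant in~\eqref{eqn:BFS-extrapol1} evaluated at $[w_0]_{A_2}\le 4\|M\|_\X\|M\|_{\X'}$.

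I expect the only real obstacle to be conceptual rather than computational: $\X$ is not an $L^p$ space, so no weighted estimate applies to it directly, and the trick is to dualize into $\X'$ and then manufacture the \emph{single} $A_2$ weight $w_0$ out of \emph{two} $A_1$ weights, one produced by an iteration algorithm on $\X$ and one on $\X'$. This is precisely why one must assume $M$ bounded on \emph{both} $\X$ and $\X'$. The routine points to watch are that $[w_0]_{A_2}$ is controlled purely by $\|M\|_\X$ and $\|M\|_{\X'}$ (so that the constant in~\eqref{eqn:BFS-extrapol2} is uniform), the H\"older bookkeeping in the core estimate, and guaranteeing $\int_\subRn f^2 w_0\,dx<\infty$ before invoking~\eqref{eqn:BFS-extrapol1}, which the truncation-and-Fatou reduction takes care of.
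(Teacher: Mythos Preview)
Your proposal is correct and follows essentially the same route as the paper's proof: reduce to $p_0=2$ via Theorem~\ref{thm:rdf-extrapolation}, build two Rubio de Francia iteration algorithms on $\X$ and on $\X'$, manufacture an $A_2$ weight by reverse factorization, and split the duality integral by H\"older. The only notable variation is in how the finiteness of $\int_{\R^n} f^2 w_0\,dx$ is secured before invoking~\eqref{eqn:BFS-extrapol1}: the paper builds $h_1=f/\|f\|_\X+g/\|g\|_\X$ so that $f\le\|f\|_\X\,\Rh_1 h_1$ gives finiteness directly (and the assumption $\|f\|_\X<\infty$ is used), whereas you obtain it via the truncation-and-Fatou reduction from Section~\ref{section:examples}; both devices are valid.
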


\begin{proof}
The proof is actually a simple variation of the proof of
Theorem~\ref{thm:rdf-extrapolation}.   By this result, we may assume
without loss of generality that \eqref{eqn:BFS-extrapol1} holds for
$p_0=2$ and weights $w_0\in A_2$.   We define two iteration algorithms:
\[ 
 \Rh_1 h_1 = \sum_{k=0}^\infty \frac{M^k h_1}{2^k \|M\|_\X^k}, \qquad
 \Rh_2 h_2 = \sum_{k=0}^\infty \frac{M^k h_2}{2^k \|M\|_{\X'}^k}.
\]
Then the proof of Theorem~\ref{thm:rdf-algorithm} generalizes to give
the following: \\

\begin{tabular}{l l  l l}
 ($A_1$) & $h_1(x)\le \Rh_1 h_1(x)$
& \qquad
($A_2$) & $h_2(x)\le \Rh_2 h_2(x)$ \\[6pt]
($B_1$) & $\|\Rh_1 h_1\|_{\X}\le 2\|h_1\|_{\X}$
& \qquad
($B_2$) & $\|\Rh_2 h_2\|_{\X'}\le 2\|h_2\|_{\X'}$
\\[6pt] 
($C_1$) & $[\Rh_1 h_1]_{A_{1}}\le 2\|M\|_{\X}$
& \qquad 
($C_2$) & $[\Rh_2 h_2]_{A_{1}}\le 2\|M\|_{\X'}$.
\end{tabular} \\

\medskip

Now fix $(f,g)\in \F$; without loss of generality $0<\|f\|_\X, \,
\|g\|_\X<\infty$.  Define
\[ h_1 = \frac{f}{\|f\|_\X}+\frac{g}{\|g\|_\X}; \]
then $\|h_1\|_\X \leq 2$.   By the definition of the associate
space and reflexivity,  there exists $h_2\in \X'$, $\|h_2\|_{\X'}=1$,
such that 
\begin{align*}
\|f\|_\X 
& \lesssim \int_\subRn fh_2\,dx; \\
\intertext{by $(A_2)$ and H\"older's inequality,}
& \leq \int_\subRn f (\Rh_1 h_1)^{-\frac{1}{2}}
(\Rh_1 h_1)^{\frac{1}{2}} \Rh_2h_2\,dx  \\
& \leq \left(\int_\subRn f^2 (\Rh_1
  h_1)^{-1}\Rh_2h_2\,dx\right)^{\frac{1}{2}} 
\left(\int_\subRn \Rh_1 h_1 \Rh_2 h_2 \,dx\right) ^{\frac{1}{2}} \\
& = I_1 ^{\frac{1}{2}} \cdot I_2 ^{\frac{1}{2}}. 
\end{align*}

To estimate $I_2$ we use the generalized H\"older's inequality and
$(B_1)$ and $(B_2)$:
\[ I_2 \leq \|\Rh_1 h_1\|_\X \|\Rh_2 h_2 \|_{\X'}
\leq 4\|h_1\|_\X \|h_2 \|_{\X'}  \leq 8. \]
To estimate $I_1$, note first that by $(A_1)$, $f \leq h_1\|f\|_\X \leq \|f\|_\X \Rh_1
h_1$, so $I_1 \leq \|f\|_\X^2 I_2 < \infty$.   Furthermore, by
$(C_1)$, $(C_2)$ and Theorem~\ref{thm:jones}, $(\Rh_1
  h_1)^{-1}\Rh_2h_2\in A_2$.  Therefore, by~\eqref{eqn:BFS-extrapol1}
 and again by $(A_1)$, 
\[ I_1 \lesssim \int_\subRn g^2 (\Rh_1
  h_1)^{-1}\Rh_2h_2\,dx 
\leq \|g\|_\X^2 \cdot I_2 \lesssim \|g\|_\X^2.  \]
If we combine all of these inequalities we
get~\eqref{eqn:BFS-extrapol2} and our proof is complete.
\end{proof}

\medskip

Extrapolation into Banach function spaces was first considered
in~\cite{cruz-uribe-fiorenza-martell-perez06} in the context of the
variable Lebesgue spaces (see below).  The result proved there is
somewhat different, and only requires that~\eqref{eqn:BFS-extrapol1}
holds for weights $w_0\in A_1$, though a version of
Theorem~\ref{thm:BFS-extrapol} was proved as a corollary.   Theorem~\ref{thm:BFS-extrapol} is a
variant of the extrapolation theorem proved for the weighted
variable Lebesgue spaces in~\cite{CruzUribeSFO:2017km}.   Curbera,
{\em et al.}~\cite{curbera-garcia-cuerva-martell-perez06} proved an
extrapolation theorem into rearrangement invariant Banach
function spaces such as Orlicz spaces.   For a general
treatment of extrapolation into Banach function spaces,
see~\cite[Chapter~4]{MR2797562}.   Very recently
in~\cite{Cruz-Uribe2016Extrapolation-a}, extrapolation was extended to
the Musielak-Orlicz spaces, a very general class of function spaces
that include the Lebesgue spaces, Orlicz spaces, and the variable
Lebesgue spaces as special cases.  (For more information about these spaces,
see~\cite{musielak83}.)

\medskip

We conclude these notes by considering the application of
extrapolation to the variable Lebesgue spaces.  These spaces are a
generalization of the classical Lebesgue spaces, replacing the
constant exponent $p$ with an exponent function $\pp$.  We begin with some
definitions; for complete details and references on these spaces,
see~\cite{cruz-fiorenza-book,MR2790542}.   Given a measurable function $\pp :
\R^n \rightarrow [1,\infty]$, let $\R^n_\infty= \{ x \in \R^n :
p(x)=\infty\}$, and define
\[ p_- = \essinf_x p(x), \qquad p_+ = \esssup_x p(x). \]

Define $\Lp$ to be the set of
measurable functions $f$ such that for some $\lambda>0$, 
\[ \rho_\pp(f/\lambda) = 
\int_{\R^n\setminus \R^n_\infty} 
\left(\frac{|f(x)|}{\lambda}\right)^{p(x)}\,dx 
+ \lambda^{-1}\|f\|_{L^\infty(\R^n_\infty)} < \infty. \]
Then $\Lp$ is a Banach function space with respect to the
Luxemburg norm
\[ \|f\|_{\Lp} =\|f\|_\pp 
= \inf\big\{ \lambda > 0 : \rho_\pp (f/\lambda) \leq 1 \big\}. \]
When $\pp=p$, $1\leq p \leq \infty$,  $\Lp=L^p$ with equality of
norms.  

The associate space of $\Lp$ equals $L^\cpp$ with an equivalent norm, where $\cpp$
is defined pointwise by
\[ \frac{1}{p(x)} + \frac{1}{p'(x)} = 1 \]
with the convention $1/\infty=0$.  Consequently, we have the generalized
H\"older's inequality
\[ \int_\subRn |fg|\,dx \leq 2\|f\|_\pp \|g\|_\cpp. \]

The boundedness of the maximal operator on $\Lp$ requires some regularity on the
exponent $\pp$.  A very useful sufficient condition is log-H\"older
continuity, defined locally by
\[ \left|\frac{1}{p(x)}-\frac{1}{p(y)}\right| \leq 
\frac{C_0}{-\log(|x-y|)}, \qquad |x-y| < \frac{1}{2}, \]
and at infinity by
\[ \left|\frac{1}{p(x)}-\frac{1}{p_\infty}\right|
\leq \frac{C_\infty}{\log(e+x|)}. \]
We denote this by writing $\pp \in LH$.  The following result was first proved
in~\cite{cruz-uribe-fiorenza-neugebauer03}; for a simpler proof,
see~\cite[Chapter~3]{cruz-fiorenza-book}. 

\begin{theorem} \label{thm:var-max-op}
Given an exponent function $\pp$ such that $1<p_-\leq p_+\leq \infty$
and such that $\pp \in LH$, 
$ \|Mf \|_\pp \lesssim \|f\|_\pp$. 
\end{theorem}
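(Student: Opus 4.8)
The plan is to pass to the equivalent modular formulation, prove a single pointwise inequality for $Mf$, and then integrate it using the \emph{unweighted} boundedness of $M$ on $L^{p_-}$, which is available because $p_->1$.

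\emph{Reductions.} Since $M$ and $\|\cdot\|_\pp$ are positively homogeneous, it suffices to bound $\|Mf\|_\pp$ for $f\ge 0$ with $\|f\|_\pp\le 1$. Assume first $p_+<\infty$, so that $\|f\|_\pp\le 1$ is equivalent to $\rho_\pp(f)=\int_\subRn f(x)^{p(x)}\,dx\le 1$. I will prove the modular bound $\int_\subRn (Mf(x))^{p(x)}\,dx\le C$, with $C=C(n,p_-,p_+,C_0,C_\infty)$; since $\lambda^{-p(x)}\le\lambda^{-p_-}$ for $\lambda\ge 1$, this gives $\rho_\pp(Mf/\lambda)\le\lambda^{-p_-}C\le 1$ for $\lambda=\max(1,C^{1/p_-})$, hence $\|Mf\|_\pp\le\lambda$, and rescaling yields $\|Mf\|_\pp\le\lambda\|f\|_\pp$ for every $f$. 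The case $p_+=\infty$ is then recovered by applying the finite case to the truncated exponents $\min(\pp,N)$ and letting $N\to\infty$, using $\|g\|_{\min(\pp,N)}\uparrow\|g\|_\pp$. So from now on assume $p_+<\infty$, $f\ge 0$, $\rho_\pp(f)\le 1$.

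\emph{The key pointwise estimate.} Two consequences of $\pp\in LH$ are needed. The local condition gives, for every cube $Q$ with $|Q|\le 1$,
\[
|Q|^{-(p_+(Q)-p_-(Q))}\le C,\qquad p_+(Q)=\esssup_{y\in Q}p(y),\quad p_-(Q)=\essinf_{y\in Q}p(y),
\]
since $p_+(Q)-p_-(Q)\lesssim(-\log\ell(Q))^{-1}$ for such cubes. The condition at infinity supplies an auxiliary radial function $R(x)=(e+|x|)^{-Nn}$, with $N$ so large that $R\in L^1(\R^n)\cap L^\infty(\R^n)$ and so that the oscillation of $\pp$ along cubes of side $\gtrsim 1$ is absorbed by powers of $R$. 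With these I would establish: there is $C$ such that for every $x\in\R^n$ and every cube $Q\ni x$,
\[
\left(\avgint_Q f\,dy\right)^{p(x)}\le
C\left[\left(\avgint_Q f(y)^{p(y)/p_-}\,dy\right)^{p_-}
+\left(\avgint_Q R(y)^{1/p_-}\,dy\right)^{p_-}+R(x)\right].
\]
The mechanism is the same in the small-cube and large-cube cases: one splits $Q$ according to whether $f(y)$ exceeds a position-dependent threshold; on the large set one trades the exponent $p(x)$ (reached, via Jensen's inequality, for $(\avgint_Q f)^{p_-(Q)}$ since $\avgint_Q f$ is under control) for the exponent $p(y)$ at the cost of a factor bounded by the two displayed $LH$ consequences, and on the small set $f$ is dominated by the threshold, whose average is at most $\avgint_Q R^{1/p_-}$. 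Taking the supremum over cubes $Q\ni x$ yields
\[
(Mf(x))^{p(x)}\le C\Big[\big(M(f^{\pp/p_-})(x)\big)^{p_-}+\big(M(R^{1/p_-})(x)\big)^{p_-}+R(x)\Big].
\]

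\emph{Conclusion.} Integrating over $\R^n$ and invoking the classical strong $(p_-,p_-)$ bound for $M$ (valid since $p_->1$),
\[
\int_\subRn (Mf)^{p(x)}\,dx\le
C\Big(\big\|f^{\pp/p_-}\big\|_{p_-}^{p_-}+\big\|R^{1/p_-}\big\|_{p_-}^{p_-}+\|R\|_1\Big)
=C\big(\rho_\pp(f)+2\|R\|_1\big),
\]
because $\|f^{\pp/p_-}\|_{p_-}^{p_-}=\int_\subRn f(x)^{p(x)}\,dx=\rho_\pp(f)\le 1$ and $\|R^{1/p_-}\|_{p_-}^{p_-}=\|R\|_1<\infty$; the right-hand side is a fixed constant, which is exactly the modular bound we reduced to. The expected main obstacle is the pointwise estimate, and within it the behaviour at infinity: one must choose the auxiliary function $R$ and the splitting threshold so that the $LH$ condition at infinity controls the error uniformly over all cubes, while keeping the \emph{fixed} exponent $p_-$ (rather than $\pp$) outside the maximal functions, which is precisely what makes the final integral converge. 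This is the step that genuinely uses regularity of $\pp$; for merely measurable or non-$LH$ exponents both the estimate and the theorem can fail.
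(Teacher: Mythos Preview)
The paper does not actually prove this theorem: it merely states it and refers to \cite{cruz-uribe-fiorenza-neugebauer03} and \cite[Chapter~3]{cruz-fiorenza-book} for the proof, and then uses it as a black box for the extrapolation discussion that follows. Your outline for the case $p_+<\infty$ is exactly the argument in those references: reduce to a modular inequality on the unit ball, prove the Diening-type pointwise bound by splitting $f$ at a position-dependent threshold and using the two $LH$ consequences you isolate, and conclude via the classical strong $(p_-,p_-)$ inequality for $M$. So for finite $p_+$ there is nothing to compare: you are reproducing the cited proof.

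Your reduction of the case $p_+=\infty$ to the finite case, however, has a real gap. You explicitly allow $C=C(n,p_-,p_+,C_0,C_\infty)$, so after truncating to $\min(\pp,N)$ the constant may blow up as $N\to\infty$; the truncation argument only closes if you first check that $C$ can be taken independent of $p_+$, which you have not done. Moreover, the claimed monotone convergence $\|g\|_{\min(\pp,N)}\uparrow\|g\|_\pp$ is not correct in general: for $|g|/\lambda<1$ the integrand $(|g|/\lambda)^{\min(p(x),N)}$ decreases in $N$ while for $|g|/\lambda>1$ it increases, so neither the modular nor the norm is monotone, and when $|\R^n_\infty|>0$ the two norms are not even defined by the same formula. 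In the cited references the case $p_+=\infty$ is handled not by truncating the exponent but by splitting $f=f\chi_{\{f>1\}}+f\chi_{\{f\le 1\}}$; the large part is treated through the modular as in your argument, and the small part is bounded so $Mf_2\le 1$, whose contribution on $\R^n_\infty$ is trivial and whose contribution on $\R^n\setminus\R^n_\infty$ is controlled directly. You should replace the truncation step by this decomposition (or else track constants carefully enough to show they do not depend on $p_+$).
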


Clearly, if $\pp \in LH$,
then $\cpp \in LH$, so if $1<p_-\leq p_+<\infty$ and $\pp \in LH$, the
maximal operator is bounded on $\Lp$ and $L^\cpp$.  Moreover,
Diening~\cite{diening05,MR2790542}
proved the following very deep result:  given any exponent function
$\pp$, if $1<p_-\leq p_+<\infty$, the
maximal operator is bounded on $\Lp$ if and only if it is bounded on
$L^\cpp$.\footnote{Diening also showed that if $M$ is bounded on $\Lp$, then there
exists $s>1$ such that it is also bounded on $L^{\pp/s}$. 
We used this fact instead of the boundedness of $M$ on both $\Lp$ and $L^\cpp$ to prove
our extrapolation theorem
in~\cite{cruz-uribe-fiorenza-martell-perez06}.  In addition, we
  assumed an abstract version of this property to prove extrapolation for
  general Banach function spaces in~\cite{MR2797562}.}

It follows from these facts that we can apply extrapolation to the
variable Lebesgue spaces $\Lp$, assuming only that $1<p_-\leq
p_+<\infty$ and that the
maximal operator is bounded on $\Lp$.   As an immediate consequence,
we get that in this case, if $T$ is a Calder\'on-Zygmund singular
integral, then $\|Tf\|_\pp \lesssim \|f\|_\pp$ whenever $M$ is bounded
on $\Lp$.    In~\cite{cruz-fiorenza-book} we conjectured that this was
a necessary as well as sufficient condition.  We recently learned that
this conjecture was proved by Rutsky~\cite{rutskyP}.  

Similarly, many other norm inequalities can be extended to variable
Lebesgue spaces using the corresponding weighted norm inequalities.
For a number of examples,
see~\cite{cruz-fiorenza-book,cruz-uribe-fiorenza-martell-perez06,MR2797562}.
For the application of extrapolation to develop the theory of variable
Hardy spaces, see~\cite{DCU-dw-P2014}.  Finally,
in~\cite{CruzUribe:2016wv} we developed a theory of bilinear
extrapolation which we used to prove bilinear inequalities in variable
Lebesgue spaces starting from weighted bilinear inequalities. This led
to both new (and simpler) proofs of known results for bilinear
operators on variable Lebesgue spaces and also to new results.  

\bibliographystyle{plain}
\bibliography{paseky-lecture}

\end{document}